\definecolor{chianti}{rgb}{0.6,0,0}
\definecolor{meretale}{rgb}{0,0,.6}
\definecolor{leaf}{rgb}{0,.35,0}
\newtheorem{thm}{Theorem}[section]
\newtheorem{lem}[thm]{Lemma}
\newtheorem{cor}[thm]{Corollary}
\newcommand{\myendsymbol}{\ensuremath{\diamondsuit}}
\declaretheorem[numbered=no,
  title=Theorem,
  qed={},
  refname={theorem,theorems},
  Refname={Theorem,Theorems},
  sharenumber=thm,
]{thmNoNumber}
\declaretheorem[
  style=definition,
  title=Example,
  qed={$\myendsymbol$},
  refname={example,examples},
  Refname={Example,Examples},
  sharenumber=thm,
]{exa}
\declaretheorem[
  style=definition,
  title=Convention,
  qed={$\myendsymbol$},
  sharenumber=thm,
]{cnv}
\declaretheorem[
  style=definition,
  title=Notation,
  qed={$\myendsymbol$},
  sharenumber=thm,
]{ntn}
\declaretheorem[
  style=definition,
  title=Remark,
  qed={$\myendsymbol$},
  sharenumber=thm,
]{rmk}
\declaretheorem[
  style=definition,
  title=Question,
  qed={$\myendsymbol$},
  sharenumber=thm,
]{que}
\def\Pf{\operatorname{Pf}}
\newcommand{\tr}{{\mathrm{T}}}
\newlength{\myl}
\newcommand{\into}{\hookrightarrow}
\renewcommand{\to}{\longrightarrow}
\newcommand{\minus}{\smallsetminus}
\newcommand{\calR}{\mathcal{R}}
\newcommand{\fraka}{{\mathfrak{a}}}
\newcommand{\frakm}{{\mathfrak{m}}}
\newcommand{\frakp}{{\mathfrak{p}}}
\newcommand{\frakP}{{\mathfrak{P}}}
\def\frakP{\mathfrak{P}}
\def\frakq{\mathfrak{q}}
\def\GL{\operatorname{GL}}
\def\Gr{\operatorname{Gr}}
\def\SL{\operatorname{SL}}
\def\Sp{\operatorname{Sp}}
\newcommand{\CC}{\mathbb{C}}
\newcommand{\II}{\mathbb{I}}
\newcommand{\KK}{\mathbb{K}}
\newcommand{\NN}{\mathbb{N}}
\newcommand{\PP}{\mathbb{P}}
\newcommand{\QQ}{\mathbb{Q}}
\newcommand{\ZZ}{\mathbb{Z}}
\newcommand{\YZ}{\ensuremath{(YZ)}}
\def\00{\mathbbm{0}}
\def\11{\mathbbm{1}}
\DeclareMathOperator{\Cl}{\textup{Cl}}
\DeclareMathOperator{\height}{\textup{ht}}
\DeclareMathOperator{\id}{\textup{id}}
\DeclareMathOperator{\ini}{{\textup{in}_{\it B}}}
\DeclareMathOperator{\depth}{\textup{depth}}
\numberwithin{equation}{subsection}
\begin{document}
\title[Classical nullcones]{On the natural nullcones of the symplectic \\ and general linear groups}

\author{Vaibhav Pandey}
\address{Department of Mathematics, Purdue University, West Lafayette, IN~47907, USA}
\email{pandey94@purdue.edu}

\author{Yevgeniya Tarasova}
\address{Department of Mathematics, University of Michigan, Ann Arbor, MI 48109, USA}
\email{ytarasov@umich.edu}

\author{Uli Walther}
\address{Department of Mathematics, Purdue University, West Lafayette, IN~47907, USA}
\email{walther@purdue.edu}

\thanks{
P.V.\ partially supported by an AMS--Simons Travel Grant. 
U.W.\ partially supported by NSF grant DMS-2100288 and by Simons Foundation Collaboration Grant for Mathematicians \#580839. }

\subjclass[2010]{Primary 13A50, 13A35 ; Secondary 13C40, 20G05.}
\keywords{nullcone, invariant ring, $F$-regular, $F$-pure, initial ideal, divisor class group}

\begin{abstract}
Consider a group acting on a polynomial ring $S$ over a field $\KK$ by degree-preserving $\KK$-algebra automorphisms. Several key properties of the invariant ring can be deduced by studying the \textit{nullcone} of the action, that is, the vanishing locus of all non-constant homogeneous invariant polynomials. These properties include the finite generation of the invariant ring and the purity of its embedding in $S$. In this article, we study the nullcones arising from the natural actions of the symplectic and general linear groups.   

For the natural representation of the symplectic group (via copies of the regular representation), the invariant ring is a generic Pfaffian ring. We show that the nullcone of this embedding is $F$-regular in positive characteristic. Independent of characteristic, we give a complete description of the divisor class group of the nullcone and determine precisely when it is Gorenstein.

For the natural representation of the general linear group (via copies
of the regular representation and copies of its dual), the invariant
ring is a generic determinantal ring. The nullcone of this embedding
is typically non-equidimensional; its irreducible components are the varieties of complexes introduced by Buchsbaum and Eisenbud. We show that each of these
irreducible components are $F$-regular in positive characteristic. We also show that the Frobenius splittings of the varieties of complexes may be chosen compatibly so that the nullcone is $F$-pure.  
\end{abstract}
\setcounter{tocdepth}{3}
\maketitle

\section{Introduction}
Let $G$ be a group acting on a standard graded polynomial ring $S$
over a field $\KK$ via degree-preserving $\KK$-algebra
automorphisms. Let $R$ denote the graded subring of $S$ consisting of
all the polynomials which are invariant under the action of each group
element, so $R$ is the invariant ring of this action. Let
$\frakm_R$ denote the homogeneous maximal ideal of $R$. Consider the
ideal of $S$ generated by all non-constant homogeneous polynomials that are invariant under this action. In $1893$, Hilbert
famously showed that if $G$ is a linearly reductive group, then a
minimal set of homogeneous invariant generators of this ideal also generates the ring $R$ as a $\KK$-algebra \cite{Hilbert}. This settled the finite generation of invariant rings for all linearly reductive groups, a result which was previously known only for the special linear group $\SL_2(\CC)$.

The ideal $\frakm_RS$ generated by all homogeneous invariants of
positive degree is often called the \emph{(Hilbert's) nullcone ideal} and
its zero set is referred to as the (Hilbert's) nullcone.  In this article, we call the coordinate ring $S/\frakm_RS$ 
the \emph{nullcone} of the embedding $R \subseteq S$ (see \cite[Chapter
2]{Kemper-Derksen} for a detailed exposition on nullcones). The study of nullcones has been central to classical invariant theory and has more recently found renewed interest in algebraic geometry, representation theory, and commutative algebra \cite{Hesselink, Wang, Kraft-Wallach, Kim, Kraft-Schwarz, Lorincz, HJPS}. In a series of exciting new developments, efficient algorithms have been developed to test if a given point lies in the nullcone (the \emph{nullcone membership problem}) and these algorithms have led to intriguing connections with computational complexity theory. We point the reader to \cite{Makam-Wigderson} and the references therein.

As a first example, note that the nullcone for the action of a finite group is an Artinian ring. For a more illuminating example, let $Y$ be a $d\times n$ matrix of indeterminates over~$\KK$, and
set~$S$ to be the polynomial ring $\KK[Y]$. Let $R =
\KK[\{\Delta\}]$ be the $\KK$-algebra generated by the size
$d$ minors $\{\Delta\}$ of~$Y$.
Then~$R$ is the coordinate ring for the Pl\"ucker embedding of the
Grassmannian of~$d$-dimensional subspaces of
an~$n$-dimensional vector space. The special linear
group~$\SL_d(\KK)$ acts $\KK$-linearly on $S$ via the action
\[
M\colon Y\mapsto MY\qquad\text{ for }\ M\in\SL_d(\KK).
\]
When $\KK$ is an infinite field, the invariant ring is precisely~$R$, see \cite{Igusa} or \cite[\S 2]{DeConciniProcesi76}. Clearly, the nullcone of the natural embedding
\[R = \KK[\{\Delta\}] \subseteq \KK[Y] = S\]
is the determinantal ring $S/I_d(Y)$ defined by the maximal minors of a generic matrix.

When $\KK$ has characteristic zero, the group $\SL_d(\KK)$ is linearly
reductive. It follows that $R$ is a direct summand of $S$ as an
$R$-module. In contrast, when $\KK$ has positive characteristic, it
was recently shown in \cite[Theorem 1.1]{HJPS} that the above natural
embedding typically does \emph{not} split. This non-splitting is due
to the Cohen--Macaulay property of the nullcone, when combined with
the flatness of the Frobenius map on the ambient polynomial ring $S$.

Let us review the key algebro-geometric properties of the above nullcone: The Cohen--Macaulay property of determinantal rings of maximal minors was proved using the Eagon--Northcott resolution in \cite{Eagon-Northcott} and later for minors of all sizes by the technique of principal radical systems in \cite{Hochster-Eagon}. The divisor class group of determinantal rings is the infinite cyclic group by \cite{Bruns} and these rings  are Gorenstein precisely when the matrix is square by \cite[Theorem 5.4.6]{Svanes}. The initial ideals of generic determinantal ideals are squarefree since the minors form a Gr\"obner basis by \cite{Sturmfels} and \cite{CGG}. Determinantal rings are $F$-regular in positive characteristic by \cite[\S 7]{HH94} (see also \cite[Theorem 4.4]{Pandey-Tarasova}) and so they have rational singularities in characteristic zero by \cite[Theorem 4.3]{Smith}.

The objective of this paper is to discuss the corresponding results for the nullcones arising from the natural actions of the symplectic group and the general linear group. We now describe these group actions, and our main results.

Let $Y$ be a $2t\times n$ matrix of indeterminates for positive integers $t$ and $n$. Set~$S$ to be the polynomial ring $\KK[Y]$ and let
\begin{equation*}
\Omega\colonequals\begin{pmatrix} 0 & \II \\ -\II & 0 \end{pmatrix}
\end{equation*}
be the size $2t$ standard symplectic block matrix, where $\II$ is the size $t$ identity matrix. The $\KK$-algebra $R\colonequals \KK[Y^\tr \Omega Y]$ is canonically isomorphic to $\KK[X]/\Pf_{2t+2}(X)$, where~$X$ is an $n\times n$ alternating matrix of indeterminates, and $\Pf_{2t+2}(X)$ the ideal generated by its principal size~$2t+2$ Pfaffians. The symplectic group
\[
\Sp_{2t}(\KK)\colonequals\{M\in\GL_{2t}(\KK)\ |\ M^\tr\Omega M=\Omega\}
\]
acts $\KK$-linearly on $S$ via the action
\[
M\colon Y\mapsto MY\qquad\text{ for }\ M\in\Sp_{2t}(\KK).
\]
The invariant ring is precisely $R$ whenever $\KK$ is infinite, see \cite[\S 6]{DeConciniProcesi76} or \cite[Theorem 5.1]{Hashimoto05}. Notice that the ideal \[\frakP (Y):= (Y^\tr\Omega Y)S\] generated by the entries of the alternating matrix $Y^\tr\Omega Y$ is the nullcone ideal for this group action. In this article, we refer to the nullcone 
of the natural embedding 
\[
R = \KK[Y^\tr \Omega Y] \subseteq  \KK[Y] = S. 
\]
as the (natural) \emph{symplectic nullcone} $\KK[Y]/\frakP(Y)$.

When $\KK$ has characteristic zero, the group $\Sp_{2t}(\KK)$ is
linearly reductive and thus $R$ is a direct summand of $S$ as an
$R$-module. When $\KK$ has positive characteristic, this embedding
typically does \emph{not} split by \cite[Theorem 1.1]{HJPS}.

As in the
case of the Pl\"ucker embedding of Grassmannians, this non-splitting is due to the
Cohen--Macaulay property of the symplectic nullcone \cite[Theorem 1.2]{HJPS}, in conjunction with the flatness of
Frobenius. Earlier, the irreducibility and normality of the 
nullcone was proved in \cite[Theorem 9.1 (3)]{Kraft-Schwarz}. We prove
a much stronger result by establishing the $F$-regularity of the symplectic nullcone. We also show that the  nullcone ideal has a squarefree initial ideal and is hence amenable to Gr\"obner degeneration techniques, for example, as discussed in \cite{Conca-Varbaro}.

\begin{thmNoNumber}[Theorem \ref{thm-PfaffianNullcone-Fregular}, Corollary \ref{Cor:PfaffianNullconeSqfree}]
  Let $Y$ be a matrix of indeterminates of size $2t \times n$ for
  positive integers $t$ and $n$.
  Let $\KK$ be a field and set
  $S:=\KK[Y]$. 
  
\begin{enumerate}[\quad\rm(1)]

        \item If $\KK$ is an $F$-finite field of positive
          characteristic, the ring $S/\frakP(Y)$
          is strongly $F$-regular.
        
        \item If $\KK$ has characteristic zero, $S/\frakP(Y)$ has rational singularities.

        \item For a suitable term order, the initial ideal of $\frakP(Y)$ is squarefree. 
    \end{enumerate}    
\end{thmNoNumber}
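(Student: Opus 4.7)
My plan is to establish (3) first, then derive (1) using (3) together with the Cohen--Macaulayness of $S/\frakP(Y)$ already known from \cite[Theorem~1.2]{HJPS}, and finally deduce (2) from (1) via Smith's theorem.

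For (3), I would fix a row-first lexicographic term order on $S=\KK[Y]$, under which the generator
\[
f_{jk}=\sum_{i=1}^{t}\bigl(y_{ij}y_{t+i,k}-y_{t+i,j}y_{ik}\bigr)\qquad (1\le j<k\le n)
\]
of $\frakP(Y)$ has squarefree leading monomial $y_{1j}y_{t+1,k}$. A direct $S$-pair analysis shows that for $t\ge 2$ the $f_{jk}$'s alone do not form a Gr\"obner basis: for example, $S(f_{12},f_{13})$ reduces modulo the $f_{jk}$'s to (a sign times) a $3\times 3$ minor of $Y$ on rows $\{2,t+1,t+2\}$ and columns $\{1,2,3\}$, whose leading diagonal is still squarefree but is not in the ideal of the $\mathrm{in}(f_{jk})$'s. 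I would therefore describe the reduced Gr\"obner basis inductively as the closure of $\{f_{jk}\}$ under iterated $S$-pair reduction. The two claims needed are that (i) the completion terminates, and (ii) each newly added generator has a squarefree leading monomial; both would be verified by identifying the completion with the combinatorics of the resulting simplicial complex~$\Delta$, whose facets correspond to ``isotropic-compatible'' diagonal products of entries of~$Y$.

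For (1), once (3) is in hand, the flat Gr\"obner degeneration transfers Cohen--Macaulayness from $S/\frakP(Y)$ (by \cite[Theorem~1.2]{HJPS}) to the Stanley--Reisner ring $S/\mathrm{in}(\frakP(Y))$, which is moreover $F$-pure as any squarefree monomial quotient is. By the deformation principle of \cite{Conca-Varbaro}, $F$-purity then descends to $S/\frakP(Y)$. Upgrading $F$-purity to strong $F$-regularity is the heart of the argument: I would combine the irreducibility and normality of the nullcone from \cite[Theorem~9.1(3)]{Kraft-Schwarz} (so the singular locus has codimension at least two) with an explicit test element---plausibly a product of minors of $Y$ vanishing on the singular locus---together with a compatible Frobenius splitting of $S/\frakP(Y)$ coming from the $\Sp_{2t}(\KK)\times\GL_n(\KK)$-action on the nullcone.

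Part (2) follows from (1) via \cite[Theorem~4.3]{Smith}: $\frakP(Y)$ is defined over $\ZZ$, so its reductions modulo large primes are strongly $F$-regular by (1), and consequently $S/\frakP(Y)$ has rational singularities in characteristic zero. The principal obstacle is the upgrade from $F$-purity to strong $F$-regularity in (1); the Gr\"obner degeneration yields $F$-purity for free but no direct route to the stronger property, so identifying the correct test element and compatible splitting will require a careful analysis of the singular locus and orbit structure of the nullcone. A secondary technical challenge is describing the Buchberger completion in (3) explicitly enough to verify squarefreeness of every leading term arising from the iteration.
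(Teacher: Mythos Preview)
Your overall logical flow (prove (3), use it to get $F$-purity, then upgrade to strong $F$-regularity, then deduce (2)) diverges substantially from the paper's, and two of the steps contain genuine gaps.

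\textbf{The Gr\"obner basis step for (3).} You correctly observe that under a lexicographic order the $f_{jk}$ are not a Gr\"obner basis, and you propose to compute the Buchberger closure and verify that every new leading term is squarefree. But you give no mechanism for controlling the iterated $S$-pair reductions beyond the first step; ``identifying the completion with the combinatorics of the resulting simplicial complex'' is a restatement of the goal, not a method. The paper sidesteps this entirely: it never computes a Gr\"obner basis of $\frakP(Y)$. Instead it constructs a bespoke (non-lexicographic) term order $<_B$ under which the subset $\alpha=\{d_{i,j}:j-i\le t\}$ of the generators has pairwise coprime squarefree lead terms (Lemma~\ref{LemmaMonomialOrder}). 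This subset is then a regular sequence of length $h=\height\frakP(Y)$ (Lemma~\ref{LemmaRegularSequence}), so the product $f=\prod_{\alpha}d_{i,j}$ lies in $\frakP^{h}\subseteq\frakP^{(h)}$ with $\ini(f)$ squarefree. A single squarefree monomial in $\ini(\frakP^{(h)})$ is all that is needed: Theorem~\ref{theorem:sqfreeinitial} (Varbaro--Koley) then forces $\ini(\frakP)$ to be squarefree. No description of the reduced Gr\"obner basis is required.

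\textbf{The $F$-purity and $F$-regularity steps for (1).} Your appeal to \cite{Conca-Varbaro} for the descent of $F$-purity along the Gr\"obner degeneration is not what that paper proves; Conca--Varbaro transfer extremal Betti numbers (hence depth, Cohen--Macaulayness, regularity) when the initial ideal is squarefree, but $F$-purity is not among the properties they handle, and $F$-purity does not deform in general. More seriously, your plan for the upgrade from $F$-pure to strongly $F$-regular is essentially empty: ``an explicit test element---plausibly a product of minors'' together with ``a compatible Frobenius splitting coming from the group action'' names two ingredients without producing either. The paper's argument is direct and does not pass through an $F$-purity-first step: with the same regular sequence $\alpha$ and polynomial $f$ as above, one has $f^{p-1}\in\fraka^{[p]}:\fraka\subseteq\frakP^{[p]}:\frakP$ by Corollary~\ref{corMain}, and since $y_{1,n}\nmid\ini(f)$ one gets $y_{1,n}f^{p-1}\notin\frakm_S^{[p]}$. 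Strong $F$-regularity then follows from Glassbrenner's criterion (Theorem~\ref{theoremGlassbrenner}) once one knows that $\frac{S}{\frakP(Y)}[\frac{1}{y_{1,n}}]$ is strongly $F$-regular; this is supplied by an explicit localization isomorphism (Lemma~\ref{lemma:matrix:invert}) reducing to the nullcone for a $(2t-2)\times(n-1)$ matrix, and induction on $t$ with base case $t=1$ the determinantal ring $S/I_2(Y)$.

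Your deduction of (2) from (1) via \cite[Theorem~4.3]{Smith} matches the paper.
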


We note that in \cite[Proposition 4.7]{Lorincz23}, L\H{o}rincz also proves the $F$-regularity of the symplectic nullcone by realizing it as the image of a collapsing map of a homogeneous vector bundle (see also \cite[Proposition 5.4]{Lorincz23}). Independent of characteristic, we give a complete description of the divisor class group of the symplectic nullcone and determine precisely when it is Gorenstein.

\begin{thmNoNumber}[Theorem \ref{TheoremClassGroup}]
    Let $Y$ be a $2t \times n$ matrix of indeterminates for positive integers $t$ and $n$, and $\KK$ be any field. Let $R_{t,n}$ denote the symplectic nullcone $\KK[Y]/\frakP(Y)$. 
    \begin{enumerate}[\quad\rm(1)]
    
        \item If $n\geq t+1$, the divisor class group of $R_{t,n}$ is the integers. Otherwise, $R_{t,n}$ is a unique factorization domain.

         \item Let $Y|_t$ denote the submatrix of $Y$ consisting of the first $t$ columns. If $n \geq t+1$, the ideal $\frakp := I_{t}(Y|_t)$ of $R_{t,n}$ is prime of height one and generates the divisor class group. Furthermore, the canonical class of $R_{t,n}$ is given by $\frakp^{(n-t-1)}$.  

        \item $R_{t,n}$ is a Gorenstein ring if and only if $n \leq t+1$.    
        \end{enumerate}   
\end{thmNoNumber}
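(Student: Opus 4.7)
The plan for (1) is to realise $\Spec R_{t,n}$ as the image of a small birational morphism from a smooth variety, and to extract both the class group and canonical sheaf by pushforward. Decompose $Y=\left(\begin{smallmatrix}A\\ B\end{smallmatrix}\right)$ with $A,B$ of size $t\times n$; the defining relation $Y^{\tr}\Omega Y=0$ then reads $A^{\tr}B=B^{\tr}A$, i.e.\ the column span of $Y$ is isotropic in $\KK^{2t}$. With $\mathcal{S}$ the tautological rank-$t$ subbundle on the Lagrangian Grassmannian $\mathrm{LG}(t,2t)$, set $Z\colonequals\operatorname{Tot}(\mathcal{S}^{\oplus n})$ and consider the collapsing map $\pi\colon Z\to\Spec R_{t,n}$ sending $(\mathcal{L},f\colon\KK^{n}\to\mathcal{L})$ to the matrix whose columns are $f(e_1),\ldots,f(e_n)\in\mathcal{L}\subset\KK^{2t}$. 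For $n\geq t+1$ the morphism $\pi$ is birational, since a generic $Y$ has rank $t$ and hence a unique Lagrangian column span; a fibre computation along the rank stratification --- the fibre of $\pi$ over a rank-$r$ point is the smaller Lagrangian Grassmannian $\mathrm{LG}(t-r,2(t-r))$ --- shows that the exceptional locus has codimension $n-t+1\geq 2$ in $Z$, so $\pi$ is small.

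Because $\pi$ is small and birational between normal varieties, the Weil-divisor pushforward gives an isomorphism $\pi_{\ast}\colon\Cl(Z)\xrightarrow{\sim}\Cl(R_{t,n})$, and since $Z$ is the total space of a vector bundle on $\mathrm{LG}(t,2t)$,
\[
\Cl(Z)=\Pic(Z)=\Pic(\mathrm{LG}(t,2t))=\ZZ\cdot[\mathcal{O}(1)].
\]
The function $\det(A_1)$ on $Z$ factors as a product of a Pl\"ucker section of $\mathcal{O}(1)=\det\mathcal{S}^{\ast}$ (vanishing on the Schubert divisor of $\mathrm{LG}(t,2t)$, whose $\pi$-image in $\Spec R_{t,n}$ is an irreducible divisor $D'$) and the section $f_{1}\wedge\cdots\wedge f_{t}$ of $\pi^{\ast}\det\mathcal{S}$ (vanishing on a divisor $D\subset Z$ whose $\pi$-image is $V(\frakp)$); pushing forward the resulting principal divisor yields $[V(\frakp)]+[D']=0$ in $\Cl(R_{t,n})$. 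An incidence parametrisation of $V(\frakp)$ --- choose a Lagrangian $\mathcal{L}$, a hyperplane $W\subset\mathcal{L}$, put the first $t$ columns of $Y$ in $W$ and the remaining $n-t$ in $\mathcal{L}$ --- shows $V(\frakp)$ is irreducible of codimension one, so $\frakp$ is prime of height one and $[\frakp]$ generates $\Cl(R_{t,n})\cong\ZZ$, proving (1) for $n\geq t+1$.

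Since the preceding theorem gives rational singularities, $\omega_{R_{t,n}}=\pi_{\ast}\omega_{Z}$. For the total space of the vector bundle $\mathcal{V}=\mathcal{S}^{\oplus n}$,
\[
\omega_{Z}=\pi^{\ast}\bigl(\omega_{\mathrm{LG}(t,2t)}\otimes\det\mathcal{V}^{\ast}\bigr)=\pi^{\ast}\bigl(\mathcal{O}(-(t+1))\otimes\mathcal{O}(n)\bigr)=\pi^{\ast}\mathcal{O}(n-t-1),
\]
using $\omega_{\mathrm{LG}(t,2t)}=\mathcal{O}(-(t+1))$ and $\det\mathcal{V}^{\ast}=\mathcal{O}(n)$. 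Pushing forward and translating through the generator identification of the previous paragraph gives $\omega_{R_{t,n}}\cong\frakp^{(n-t-1)}$, proving (2). For $n\leq t$, a direct dimension count gives $\dim R_{t,n}=2tn-\binom{n}{2}$, so the $\binom{n}{2}$ entries of $Y^{\tr}\Omega Y$ form a regular sequence in $\KK[Y]$ and $R_{t,n}$ is a complete intersection --- in particular Gorenstein; rerunning the collapsing construction with $\mathrm{LG}(n,2t)$ and its tautological rank-$n$ subbundle $\mathcal{S}_{n}$ produces a codimension-one exceptional divisor in $\operatorname{Tot}(\mathcal{S}_{n}^{\oplus n})$ whose class is (up to sign) the ample generator of $\Pic(\mathrm{LG}(n,2t))=\ZZ$, forcing $\Cl(R_{t,n})=0$, so $R_{t,n}$ is a UFD. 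Part (3) follows at once: $R_{t,n}$ is Gorenstein iff $[\omega_{R_{t,n}}]=0$ in $\Cl(R_{t,n})$, which for $n\geq t+1$ (torsion-free $\Cl$) requires $n=t+1$, and for $n\leq t$ is automatic.

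The principal obstacle is the sign bookkeeping in the second paragraph: one must carefully track, via the section $f_{1}\wedge\cdots\wedge f_{t}$ of $\pi^{\ast}\det\mathcal{S}=\pi^{\ast}\mathcal{O}(-1)$, that under the isomorphism $\pi_{\ast}$ the Pl\"ucker generator of $\Pic(\mathrm{LG}(t,2t))$ corresponds to $[\frakp]$ with the correct sign, so that the identification reads $\omega_{R_{t,n}}\cong\frakp^{(n-t-1)}$ rather than an inverse symbolic power.
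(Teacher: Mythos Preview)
Your approach is genuinely different from the paper's and is essentially correct, though a few points need tightening.

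\textbf{Comparison.} The paper proceeds purely algebraically: it shows that $y_{1,1}$ is a prime element of $R_{t,n}$ (via the principal radical system of \cite{HJPS}), applies Nagata's exact sequence to the localization at $y_{1,1}$, and uses Lemma~\ref{lemma:matrix:invert} to obtain an inductive isomorphism $\Cl(R_{t,n})\cong\Cl(R_{t-1,n-1})$ that also transports the canonical class. The base cases are the determinantal ring $R_{1,n-t+1}$, the hypersurface $R_{1,2}$, and a polynomial ring. Your approach instead realizes $\Spec R_{t,n}$ as the image of the Kempf collapsing $\pi\colon\operatorname{Tot}(\mathcal{S}^{\oplus n})\to\Spec R_{t,n}$ over the Lagrangian Grassmannian, observes that $\pi$ is small for $n\geq t+1$, and reads off $\Cl$ and $\omega$ directly from $\Pic(\mathrm{LG}(t,2t))=\ZZ$ and the standard formula for the canonical bundle of a vector-bundle total space. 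This is more geometric and conceptual, avoids induction entirely, and dovetails with the collapsing methods already invoked in the paper for $F$-regularity (cf.\ the reference to L\H{o}rincz). The paper's route is more elementary and self-contained, needing no Schubert calculus or bundle computations.

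\textbf{Two points to sharpen.} First, you do not actually need rational singularities to conclude $[\omega_{R_{t,n}}]=\pi_*[\omega_Z]$: smallness alone gives $\Cl(Z)\xrightarrow{\sim}\Cl(R_{t,n})$ carrying $[K_Z]$ to $[K_{R_{t,n}}]$, since removing the codimension-$\geq 2$ exceptional locus (and its image) leaves an isomorphism. This matters because the paper's rational-singularities statement is asserted only in characteristic zero, while the present theorem is characteristic-free. Second, your incidence parametrisation shows that $V(\frakp)$ is irreducible of codimension one, hence that $\sqrt{\frakp}$ is prime of height one and generates $\Cl$; it does not by itself show that $\frakp$ is already prime (equivalently, that the scheme $V(\frakp)$ is reduced). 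You need this for the precise statement of (2), and in particular for the identification of the canonical class with $\frakp^{(n-t-1)}$ rather than a symbolic power of $\sqrt{\frakp}$. One way to close this gap within your framework is to check that the incidence variety you describe is normal and maps birationally onto $V(\frakp)$, forcing $V(\frakp)$ to be generically reduced and hence (being Cohen--Macaulay in a Cohen--Macaulay ambient ring, by a codimension argument) reduced; alternatively one can appeal to the paper's inductive reduction for this single point. Finally, for $n<t$ the relevant homogeneous space is the isotropic Grassmannian $\mathrm{IG}(n,2t)$ rather than a Lagrangian Grassmannian, though your argument goes through verbatim once the notation is adjusted.
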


We now discuss the embedding defined by the natural action of the general linear group.
For positive integers $m$, $t$, and $n$, let $Y$ and $Z$ be $m\times t$ and $t\times n$ matrices of indeterminates respectively. Set $S$ to be the polynomial ring $\KK[Y,Z]$, and take $R$ to be the $\KK$-subalgebra generated by the entries of the product matrix $YZ$. Then $R$ is canonically isomorphic to the determinantal ring $\KK[X]/I_{t+1}(X)$, where~$X$ is an $m\times n$ matrix of indeterminates and $I_{t+1}(X)$ denotes the ideal generated by the size~$t+1$ minors of $X$. The general linear group $\GL_t(\KK)$ acts~$\KK$-linearly on $S$ via
\[
M\colon\begin{cases} Y & \mapsto YM^{-1}\\ Z & \mapsto MZ\end{cases}
\]
where $M\in\GL_t(\KK)$. When the field $\KK$ is infinite, $R$ is precisely the ring of invariants, see \cite[\S 3]{DeConciniProcesi76} or \cite[Theorem 4.1]{Hashimoto05}.
The nullcone of the natural embedding
\[R = \KK[YZ] \subseteq \KK[Y,Z] = S\] is the typically
non-equidimensional reduced ring $\KK[Y,Z]/(YZ)$ defined by the entries of the matrix $YZ$ (see \cite{Musili-Sheshadri} or \cite[Theorem 4.1]{Mehta-Trivedi2} for the reducedness of the nullcone). In
this article, we refer to this ring as the (natural) \emph{generic nullcone}. The minimal primes of the generic
nullcone are the ideals
\[
\frakp_{r,s}(Y,Z):= I_{r+1}(Y)+I_{s+1}(Z)+\YZ
\]
of $S$, for $r+s = t$; see $\S$\ref{sec-splitting-generalities} for details. These are precisely the ideals defining the
varieties of exact complexes, as introduced by
Buchsbaum and Eisenbud in \cite{Buchsbaum-Eisenbud}.

When $\KK$ has characteristic zero, the group $\GL_t(\KK)$ is linearly
reductive and thus the determinantal ring $R$ splits from $S$ as an
$R$-module. When $\KK$ has positive characteristic, this embedding
typically does \emph{not} split by \cite[Theorem 1.1]{HJPS}. This is
due to the flatness of the Frobenius together with the Cohen--Macaulay
property of the varieties of complexes which was proved in
\cite[Theorem 6.2]{Huneke} using principal radical systems (see also \cite{Musili-Sheshadri, DeConcini-Lakshmibai}). The
divisor class groups of the varieties of complexes are free abelian
groups of finite ranks by \cite[Theorem 3.1]{Bruns2} and independently
by \cite[Theorem 1.1]{Yoshino}. These rings are Gorenstein under
certain symmetry conditions on the sizes of the minors involved by
\cite[Theorem 4.3]{Bruns2} and by \cite[Theorem 1.2]{Yoshino}.

Kempf showed in \cite{Kempf2} that the varieties of complexes have rational singularities, using \cite{Kempf1}. In positive characteristic, they are $F$-rational relative to the resolution of Kempf, and they are also $F$-split by \cite{MehtaTrivedi}. We prove:

\begin{thmNoNumber}[Theorems \ref{thm-Complex-Fregular}, \ref{thm-length-2-F-purity}]
  Let $Y$ and $Z$ be matrices of indeterminates of sizes $m \times t$
  and $t \times n$ respectively for positive integers $m$, $t$, and $n$. Let $\KK$ be an $F$-finite field of
  positive characteristic; set $S :=\KK[Y,Z]$ and 
  suppose that $r$ and $s$ are non-negative integers with  $r+s \leq t$. 
    \begin{enumerate}[\quad\rm(1)]
    \item The variety of complexes $S/\frakp_{r,s}(Y,Z)$ is strongly
      $F$-regular.
    \item If $t\le \min(m,n)$, the splittings of the Frobenius map on $S/\frakp_{r,s}(Y,Z)$ 
      may be chosen compatibly. 
      \item For any triple $(m,n,t)$, the generic nullcone $S/(YZ)S$ is an $F$-pure ring.
    \end{enumerate}
\end{thmNoNumber}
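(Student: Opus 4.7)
The plan is to prove the three parts separately, with Part (3) reducing to Part (2) via a standard intersection argument, and Part (1) handled by the geometric collapsing method used for the symplectic nullcone earlier in the paper.

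For Part (1), the natural approach is to realize $S/\frakp_{r,s}(Y,Z)$ as the image of a proper birational collapsing map from a homogeneous vector bundle over a Grassmannian (or product of Grassmannians) of intermediate subspaces of $\KK^t$, paralleling L\H{o}rincz's treatment of the symplectic nullcone in \cite[Proposition 4.7]{Lorincz23}. Concretely, a pair $(Y,Z)$ with $\rank(Y)\le r$, $\rank(Z)\le s$ and $YZ=0$ can be resolved by the choice of an $s$-dimensional subspace $W\subseteq\KK^t$ with $\im Z\subseteq W\subseteq\ker Y$ (in the case $r+s=t$; the general case is similar). The space of such enriched triples is a vector bundle on a smooth $F$-regular base, and the projection onto $(Y,Z)$ cuts out exactly $\frakp_{r,s}(Y,Z)$. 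A positive-characteristic collapsing theorem then transfers strong $F$-regularity to the image. A fallback is induction on $r+s$ via the principal radical systems of \cite{Huneke}, combined with the preservation of strong $F$-regularity under suitable generic hyperplane sections.

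For Part (2), the goal is to produce a single polynomial $f\in S$ such that $f^{p-1}$ defines a Frobenius splitting of $S$ compatible with every ideal $\frakp_{r,s}(Y,Z)$ at once. The hypothesis $t\le\min(m,n)$ guarantees that $Y$ and $Z$ admit full column and row rank configurations respectively, which is what makes an explicit choice available. A natural template is $f=f_Y\cdot f_Z\cdot g$, where $f_Y$ is the classical product of minors realizing the standard compatible splitting of the chain of generic determinantal ideals in $Y$, $f_Z$ is the analogue for $Z$, and $g$ is an auxiliary bilinear expression chosen so that the resulting Cartier operator preserves the ideal $(YZ)$. Compatibility is then verified by checking that this Cartier operator stabilizes each of $I_{r+1}(Y)$, $I_{s+1}(Z)$, and $(YZ)$ simultaneously.

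For Part (3), when $t\le\min(m,n)$ the nullcone ideal decomposes as $(YZ)=\bigcap_{r+s=t}\frakp_{r,s}(Y,Z)$, since these are precisely the minimal primes of the radical ideal $(YZ)$. A Frobenius splitting compatible with a family of ideals is automatically compatible with their intersection, so Part (2) yields $F$-purity of $S/(YZ)$ directly. For the remaining range $t>\min(m,n)$, one reduces to the previous case: after a suitable change of variables, the nullcone ideal is identified (up to an auxiliary polynomial factor) with the nullcone ideal for a smaller triple $(m',n',t')$ satisfying $t'\le\min(m',n')$, and $F$-purity descends via the resulting direct summand structure.

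The main obstacle is the explicit construction of the compatible splitting polynomial in Part (2). Splittings for each determinantal ideal individually are classical, but intertwining them through the bilinear relation $YZ=0$ requires a delicate choice of the auxiliary factor $g$ so that the Cartier operator stabilizes every stratum at once. If the direct combinatorial construction proves unwieldy, the natural fallback is to realize the variety of complexes as a union of Schubert varieties in an appropriate flag variety and invoke the Mehta--Ramanathan compatible splitting theorem.
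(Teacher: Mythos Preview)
Your proposal is viable in outline but follows a substantially different route from the paper, and your primary plan for Part~(2) leaves the central difficulty unresolved.

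For Part~(1), your collapsing/vector-bundle approach is exactly L\H{o}rincz's argument in \cite[Corollary~4.2]{Lorincz}, which the paper cites as an existing alternative proof. The paper instead proceeds elementarily: it inducts on $r$ via a localization lemma (inverting $y_{m,1}$ reduces to $\frakp_{r-1,s}(Y',Z')$ for smaller matrices), and closes the induction with Glassbrenner's criterion. The latter requires exhibiting $y_{m,1}f^{p-1}\notin\frakm_S^{[p]}$ with $f^{p-1}\in\frakp_{r,s}^{[p]}:\frakp_{r,s}$, and the paper achieves this via Corollary~\ref{CorollarySymbolic} by producing an explicit $f\in\frakp_{r,s}^{(h)}$ with squarefree lead term under a custom monomial order.

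For Part~(2), your Schubert/Mehta--Ramanathan fallback is essentially the Kinser--Rajchgot argument \cite[Corollary~1.4(ii)]{Rajchgot-Kinser}, again already in the literature. Your primary proposal $f=f_Y\cdot f_Z\cdot g$ is the right shape, but the ``auxiliary bilinear factor'' is precisely where all the content lies, and you have not pinned it down. In the paper the splitting element is $g=y_{m,1}z_{1,n}f$, where $f$ is the product of the $\binom{t+1}{2}$ entries $(YZ)_{i,j}$ with $i+j\le t+1$ together with a specific list of minors of $Y$ and $Z$ (lower-left corner minors of $Y$ of each size, upper-right corner minors of $Z$, and ladders of maximal minors). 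The entire difficulty is the construction of a monomial order $<_B$ under which all these factors have pairwise coprime squarefree lead terms; this is done by sorting the variables of $Y$ and $Z$ into blocks via explicit piecewise-quadratic formul\ae\ and using revlex on the block value. The membership $f\in\frakp_{r,s}^{(h)}$ is then checked directly using the containments $I_{\ell+k-1}\subseteq I_\ell^{(k)}$ for determinantal ideals. The standard determinantal splitting polynomials alone do not suffice, because the orders that diagonalize the minors of $Y$ and $Z$ do not automatically give coprime lead terms to the entries of $YZ$; this is exactly the obstacle you flag but do not overcome.

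What each approach buys: your geometric route is conceptual and short once the collapsing and Schubert machinery is in hand, but it is not self-contained and reproduces known results. The paper's combinatorial route is elementary and yields extra consequences---squarefree initial ideals for the nullcone and for each $\frakp_{r,s}$ (Corollary~\ref{cor:VoCsqfreeInitial}), and symbolic $F$-splitness of $\frakp_{r,s}$---which the geometric arguments do not immediately give.

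Your Part~(3) matches the paper: compatibility of the splitting with all minimal primes gives $F$-purity of the intersection when $t\le\min(m,n)$, and the remaining range is handled by a direct-summand reduction.
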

We remark that the above theorem is not new:
 L\H{o}rincz recently proved the $F$-regularity of the varieties of complexes using the collapsing of homogeneous vector bundles \cite[Corollary 4.2]{Lorincz}, while the fact that varieties of complexes are compatibly $F$-split follows from Kinser--Rachogt's study of the Frobenius splittings of type A quiver varieties \cite[Corollary 1.4 (ii)]{Rajchgot-Kinser}.  
On the other hand, the techniques and constructions involved in our proof are novel and yield an interesting consequence: the generic nullcone and the varieties of complexes have squarefree initial ideals (Corollary \ref{cor:VoCsqfreeInitial}).

The critical step in establishing the $F$-regularity of the symplectic nullcone and the varieties of complexes is the construction of `appropriate' Frobenius splittings. To this end, we create certain delicate and non-obvious monomial orderings in each case; these orders select special lead terms for the generators of the respective nullcone ideal in a characteristic-free manner. We show that the nullcone ideal of interest has a squarefree initial ideal with respect to these monomial orders. The said monomial orders are explained with examples in $\S$\ref{Subsection
  PfaffianMonomialOrder} and $\S$\ref{Subsection VoCMonomialOrder}.

\section{Testing for $F$-purity and $F$-regularity}

We briefly recall some details on the types of singularities in positive characteristic that we discuss in this paper, as well as  certain tools to test for them. For a detailed exposition on Frobenius singularities, we refer the reader to the lecture notes \cite{Ma-Polstra} of Ma and Polstra.  

Let $R$ be a reduced Noetherian ring of positive prime characteristic $p$. The letter $e$ denotes a variable nonnegative integer, and $q=p^e$ the $e$-th power. Let $R^{1/q}$ denote the ring obtained by adjoining all $q$-th roots of elements of $R$. The inclusion $R\into R^{1/q}$ can then be identified with the $e$-fold Frobenius endomorphism $F^e\colon R\to R$.

The ring $R$ is \emph{$F$-finite} if it is a finitely generated
$R$-module via the action of the Frobenius endomorphism $F: R \to R$,
or equivalently, if $R^{1/p}$ is finitely generated as an
$R$-module. A finitely generated algebra over a field $\KK$ is
$F$-finite if and only if $\KK^{1/p}$ is a finite field extension of
$\KK$. For an ideal $I=(z_1, z_2, \ldots, z_t)$ of $R$, the symbol
$I^{[q]}$ denotes the ideal $F(I)R = (z_1^q,z_2^q,\ldots,z_t^q)$ of
$R$.

The ring $R$ is \emph{$F$-pure} if $F$ is a pure homomorphism, i.e., if the map $R \otimes_R M \to R^{1/p}\otimes_R M$ induced by the
inclusion of $R$ in $R^{1/p}$ is injective for each $R$-module $M$. We
say that $R$ is \emph{$F$-split} if $F$ is a split
monomorphism. Clearly, any
$F$-split ring is $F$-pure; furthermore, an algebra over an $F$-finite field is $F$-pure if and
only if it is $F$-split. 

An $F$-finite ring $R$ is \emph{strongly $F$-regular} if for each $c$
in $R$ not in any of its minimal primes, there exists an integer $q$
such that the $R$-linear inclusion $R \to R^{1/q}$ sending $1$ to
$c^{1/q}$ splits as a map of $R$-modules. If the $F$-finite ring $R$
is $\NN$-graded, strong $F$-regularity coincides with other similar
notions like $F$-regularity and weak $F$-regularity.

Before moving forward, we fix some general hypotheses.
\begin{cnv}
Unless stated otherwise,  all rings in this paper are standard graded
algebras over a field. When the field is of positive characteristic,
it is assumed to be $F$-finite.

Since all rings in this paper are $F$-finite,
we may use the words $F$-pure and $F$-split interchangeably.
Similarly, since all rings
in this paper are standard graded algebras over $F$-finite fields, we
sometimes loosely refer to a strongly $F$-regular ring simply as an
$F$-regular ring.
\end{cnv}

The following criterion of Fedder is useful for testing $F$-purity in the homogeneous setting:

\begin{thm}\label{theoremFedder} \cite[Theorem 1.12]{Fedder}
Let $S = \KK[x_1, \ldots, x_n]$ be an $\mathbb{N}$-graded polynomial ring over a field $\KK$ of positive characteristic. Let $I$ be a homogeneous ideal of $S$ and let $\frakm_S$ denote the homogeneous maximal ideal of $S$. Then $S/I$ is $F$-pure if and only if the ideal $I^{[p]}:I$ is not contained in ${\frakm_S}^{[p]}$.
\end{thm}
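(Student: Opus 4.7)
The plan is to translate the $F$-purity of $S/I$ into the concrete ideal-theoretic condition of the criterion by exploiting the cyclic structure of $\Hom_S(F_*S, S)$ as an $F_*S$-module, which is available because $S$ is regular.

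First I would set up the key identification. Since $S$ is $F$-finite and regular, Kunz's theorem gives that $F_*S$ is a free $S$-module; a basis is given by the monomials $x^\alpha$ with $0 \le \alpha_i \le p-1$ (together, if $\KK$ is not perfect, with a $\KK$-basis of $\KK^{1/p}$). Dually, $\Hom_S(F_*S, S)$ is free of rank one as an $F_*S$-module, generated by the ``trace map'' $\Psi$ that sends $x_1^{p-1}\cdots x_n^{p-1}$ to $1$ and all other basis elements to $0$. Thus every $\phi \in \Hom_S(F_*S, S)$ has a unique description $\phi(y) = \Psi(uy)$ for some $u \in S$, and $\Psi$ satisfies the characterizing identity $\Psi(r^p y) = r\,\Psi(y)$.

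Next I would characterize $F$-purity in these terms: $S/I$ is $F$-pure if and only if there exists $\phi \colon F_*S \to S$ with (a) $\phi(F_*I) \subseteq I$ and (b) $\phi(1) \notin \frakm_S$. Indeed, (a) lets $\phi$ descend to a map $F_*(S/I) \to S/I$, and (b), combined with a graded rescaling, produces a splitting of Frobenius. Writing $\phi = u \cdot \Psi$, I would verify two claims. \textbf{Claim 1:} the submodule $\Hom_S(F_*S, I) \subseteq \Hom_S(F_*S, S) \cong F_*S$ corresponds to the ideal $I^{[p]}$; this follows because $F_*S$ is projective, so $\Hom_S(F_*S, I) \cong I \otimes_S \Hom_S(F_*S, S)$, and the natural map sends $i \otimes (v\Psi)$ to $(i^p v)\Psi$ via $i\,\Psi(vy) = \Psi(i^p v y)$, making the image of $I \otimes F_*S$ exactly $I^{[p]}$. \textbf{Claim 2} (translating (a)): $\phi(F_*I) \subseteq I$ iff $u \in I^{[p]}:I$. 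The easy direction uses $\Psi(a^p s) = a\,\Psi(s)$ to show $\Psi(uj) \in I$ whenever $uj = \sum a_k^p s_k \in I^{[p]}$; for the converse, for each fixed $j \in I$ the $S$-linear map $s \mapsto \Psi(ujs)$ lands in $I$ (since $js \in I$), so Claim~1 gives $uj \in I^{[p]}$, hence $u \in I^{[p]}:I$.

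The final step -- which I expect to be the main subtlety -- is reconciling condition (b) with the stated criterion $u \notin \frakm_S^{[p]}$. Literally, (b) requires that the coefficient of the top monomial $x_1^{p-1}\cdots x_n^{p-1}$ in $u$ be a unit, whereas $u \notin \frakm_S^{[p]}$ only asserts that some $x^\alpha$ with $\alpha \in \{0,\ldots,p-1\}^n$ appears in $u$ with a unit coefficient. The bridge is that $I^{[p]}:I$ is an \emph{ideal}, so given $u \in I^{[p]}:I \setminus \frakm_S^{[p]}$ with distinguished monomial $x^\alpha$, the element $u' := u \cdot x^{(p-1,\ldots,p-1) - \alpha}$ still lies in $I^{[p]}:I$, and the coefficient of $x_1^{p-1}\cdots x_n^{p-1}$ in $u'$ equals that of $x^\alpha$ in $u$, hence is a unit; thus $\Psi(u') \notin \frakm_S$. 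The two existence conditions are therefore equivalent, completing the proof.
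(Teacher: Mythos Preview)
The paper does not prove this statement; it is quoted as Fedder's criterion with a citation to \cite{Fedder} and used as a black box throughout. There is therefore no proof in the paper to compare against.

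Your argument is the standard proof and is correct: identify $\Hom_S(F_*S,S)$ with $F_*S$ via the trace generator $\Psi$, show that the maps compatible with $I$ correspond to $(I^{[p]}:I)/I^{[p]}$, and translate ``some such map splits Frobenius'' into ``$I^{[p]}:I \not\subseteq \frakm_S^{[p]}$'' via the monomial shift in your final step. One expository remark: the justification you give for condition~(b), ``graded rescaling,'' is a bit opaque. The cleanest way to see that $\phi(1)\notin\frakm_S$ is enough is to observe that the image of the evaluation-at-$1$ map
\[
\{\phi\in\Hom_S(F_*S,S) : \phi(F_*I)\subseteq I\}\longrightarrow S/I,\qquad \phi\longmapsto \overline{\phi(1)},
\]
is an ideal of $S/I$; hence it contains $1$ as soon as it is not contained in $\frakm_S/I$, which (since $I\subseteq\frakm_S$) is exactly your condition $\phi(1)\notin\frakm_S$.
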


The following criterion of Glassbrenner is useful for testing $F$-regularity in the graded setting:

\begin{thm}\label{theoremGlassbrenner} \cite[Theorem 3.1]{Glassbrenner}
  Let $\KK$ be a field of positive characteristic $p$ and
  set $S=\KK[x_1,\ldots,x_n]$, with homogeneous maximal ideal
  $\frakm_S$. Suppose that $R=S/I$ is a finitely generated
  $\NN$-graded domain. Then $R$ is strongly $F$-regular if and only if
    \begin{itemize}
    \item there exists a homogeneous element $s$ of $S$, not in $I$,
      for which the ring $R[1/s]$ is strongly $F$-regular, and
    \item the ideal $s(I^{[p]}:I)$ is not contained in  $\frakm_S^{[p]}$.\qed
    \end{itemize}
\end{thm}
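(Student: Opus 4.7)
My plan is to prove both implications via the Fedder-style correspondence between $R$-linear maps $R^{1/p^e} \to R$ and elements of $(I^{[p^e]}:I)/I^{[p^e]}$---the engine that already underlies Fedder's criterion (Theorem \ref{theoremFedder}). Under this correspondence, condition (ii) is equivalent to the existence of an $R$-linear map $\phi: R^{1/p} \to R$ with $\phi(s^{1/p}) = 1$; the graded structure on $S$ is used here to normalize the image to $1$ exactly, rather than to a nonzero constant.

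The forward direction is immediate by taking $s=1$: then $R[1/s] = R$ is strongly $F$-regular by assumption, and since strong $F$-regularity implies $F$-purity, Fedder's criterion gives $s(I^{[p]}:I) = I^{[p]}:I \not\subseteq \frakm_S^{[p]}$.

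For the reverse direction, I would show that for each nonzero $c \in R$ some Frobenius map $R \to R^{1/p^N}$ sending $1$ to $c^{1/p^N}$ splits. The argument has two ingredients. First, I iterate the map $\phi$ afforded by (ii) via the standard composition-of-splittings trick: given $R$-linear maps $\phi_i : R^{1/q_i} \to R$ with $\phi_i(c_i^{1/q_i}) = 1$ for $i=1,2$, composing $\phi_2$ with the Frobenius twist $\phi_1^{(q_2)} : R^{1/q_1 q_2} \to R^{1/q_2}$ yields an $R$-linear map $R^{1/q_1q_2} \to R$ sending $(c_1 c_2^{q_1})^{1/q_1q_2}$ to $1$. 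Applied inductively with both $c_i = s$, this produces, for every $e \geq 1$, an $R$-linear splitting $\phi_e : R^{1/p^e} \to R$ satisfying $\phi_e(s^{N_e/p^e}) = 1$, where $N_e = 1 + p + \cdots + p^{e-1}$.

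Second, by (i) and $F$-finiteness, $R[1/s]$ is strongly $F$-regular, so for some $e$ there is an $R[1/s]$-linear splitting of $R[1/s] \to R[1/s]^{1/p^e}$ sending $c^{1/p^e}$ to $1$. Since $R^{1/p^e}$ is finitely generated as an $R$-module, clearing $s$-denominators yields an $R$-linear map $\psi : R^{1/p^e} \to R$ with $\psi(c^{1/p^e}) = s^a$ for some $a \geq 0$. Choosing $e'$ so large that $N_{e'} \geq a$ and replacing $\psi$ by $s^{N_{e'}-a}\psi$, the composite $\phi_{e'} \circ \psi^{(p^{e'})} : R^{1/p^{e+e'}} \to R$ sends $c^{1/p^{e+e'}}$ to $1$, delivering the desired splitting. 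I expect the main obstacle to be the careful bookkeeping of Frobenius twists and exponents throughout the composition---ensuring that the power $N_{e'}$ matches on both sides---rather than any deep structural difficulty.
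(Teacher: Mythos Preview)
The paper does not prove this theorem: it is quoted from \cite[Theorem 3.1]{Glassbrenner} and closed with a \qed, serving purely as a tool invoked later. There is thus no paper proof to compare against, and your proposal stands on its own.

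Your sketch is correct and follows the standard argument behind Glassbrenner's criterion. The identification of $\Hom_R(R^{1/p^e},R)$ with $(I^{[p^e]}:I)/I^{[p^e]}$ does produce, from condition~(ii), a map $\phi$ with $\phi(s^{1/p})$ a nonzero scalar; your remark about the graded structure is exactly what pins this value down in degree zero. The composition trick and the exponent bookkeeping $N_e = 1+p+\cdots+p^{e-1}$ are standard and your account is accurate. The one place deserving an extra sentence if written out in full is the passage from $s(I^{[p]}:I)\not\subseteq \frakm_S^{[p]}$ to an honest equality $\phi(s^{1/p})=1$: one should pick a \emph{homogeneous} $g\in I^{[p]}:I$ witnessing the non-containment, multiply by a monomial so that $sg$ acquires a nonzero coefficient on $(x_1\cdots x_n)^{p-1}$, and then observe that the trace generator of $\Hom_S(S^{1/p},S)$ sends $(sg)^{1/p}$ to that coefficient alone, homogeneity killing all higher-degree contributions. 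You allude to this, and it is not a gap, but it is the step most in need of justification.
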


The next two facts are helpful in proving the $F$-purity of a given ring. We shall use the following as an ingredient in establishing the $F$-regularity of the symplectic nullcone:

\begin{cor}\label{corMain}\cite[Corollary 3.3]{Pandey-Tarasova}
  Let $S$ be a polynomial ring over a field of positive prime
  characteristic $p$ and let $I$ be an equidimensional ideal of
  $S$. If $\fraka \subsetneq I$ is an ideal generated by a regular
  sequence of length equal to the height of $I$, then the ideal
  $\fraka^{[p]}:\fraka$ is contained in $I^{[p]}:I$. In particular, if
  the ring $S/\fraka$ is $F$-pure, then so is $S/I$.
\end{cor}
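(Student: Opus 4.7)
The plan is to establish the ideal-theoretic containment $\fraka^{[p]}:\fraka\subseteq I^{[p]}:I$; once this is in hand, the $F$-purity statement follows in one line from Fedder's criterion. Write $\fraka=(f_1,\ldots,f_c)$ with $f_1,\ldots,f_c$ a regular sequence of length $c=\height I$. A standard induction on $c$, using at each step that the sequence remains regular after raising individual $f_i$ to arbitrary $p$-th powers, produces the classical identity
$$\fraka^{[p]}:\fraka \;=\; \fraka^{[p]} + \bigl((f_1\cdots f_c)^{p-1}\bigr).$$
Since $\fraka^{[p]}\subseteq I^{[p]}\subseteq I^{[p]}:I$ already holds trivially, the desired containment reduces to the single claim $(f_1\cdots f_c)^{p-1}\cdot I\subseteq I^{[p]}$.

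The heart of the argument is to prove this remaining inclusion, and it is here that the equidimensionality of $I$ enters. Since $\fraka$ is a complete intersection, $S/\fraka$ is Cohen--Macaulay with every associated prime minimal of height $c$; combined with $\height I=c$, this forces the minimal primes of $I$ to sit among those of $\fraka$, and every element of $I\setminus\fraka$ to be a zero-divisor on $S/\fraka$. I would localize at each minimal prime $P$ of $I$, obtaining a regular local ring $S_P$ of dimension $c$ in which $\fraka_P$ is generated by a system of parameters and both $\fraka_P$ and $I_P$ are $PS_P$-primary. In this Artinian setting the Gorenstein ring $S_P/\fraka_P^{[p]}$ has the property, via Matlis duality, that the cyclic submodule $(\fraka_P^{[p]}:\fraka_P)/\fraka_P^{[p]}$ is generated by $(f_1\cdots f_c)^{p-1}$ and isomorphic to $S_P/\fraka_P$; chasing this identification through the surjection $S_P/\fraka_P^{[p]}\twoheadrightarrow S_P/I_P^{[p]}$ yields the local containment $(f_1\cdots f_c)^{p-1}\cdot I_P\subseteq I_P^{[p]}$. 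Equidimensionality of $I$ then enables a patching step that lifts these local containments to the global statement.

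Finally, the $F$-purity conclusion is immediate: if $S/\fraka$ is $F$-pure, Fedder's criterion (Theorem~\ref{theoremFedder}) supplies an element of $(\fraka^{[p]}:\fraka)\setminus \frakm_S^{[p]}$, and the containment just established promotes it to an element of $(I^{[p]}:I)\setminus\frakm_S^{[p]}$, so $S/I$ is $F$-pure by the same criterion. The principal obstacle is the Artinian-local step: passing from the Gorenstein duality statement for $\fraka$ to a containment modulo the larger ideal $I^{[p]}$ (rather than merely modulo $\fraka^{[p]}$) is where equidimensionality is essential, since it prevents pathological embedded components from obstructing the comparison between the two Frobenius powers.
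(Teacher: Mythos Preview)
Your outline takes a genuinely different route from the paper, and the key local step does not go through as written. The paper's proof is a five-line chain of colon ideals built on linkage: set $J=\fraka:I$, and then
\[
\fraka^{[p]}:\fraka \;\subseteq\; \fraka^{[p]}:IJ \;=\; (\fraka^{[p]}:J):I \;\subseteq\; (\fraka^{[p]}:J^{[p]}):I \;=\; (\fraka:J)^{[p]}:I \;=\; I^{[p]}:I,
\]
using flatness of Frobenius for the penultimate equality and the Peskine--Szpiro symmetry of linkage $\fraka:J=I$ (valid since $I$ is equidimensional and $\fraka$ is a complete intersection of the same height) for the last. No localization, no explicit generator.

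Your reduction to the single generator $(f_1\cdots f_c)^{p-1}$ is correct, and the patching step (checking the containment at associated primes of $S/I^{[p]}$, which by flatness of Frobenius coincide with those of $S/I$) is sound once the local claim is established. The gap is precisely the local claim. You assert that the isomorphism $(\fraka_P^{[p]}:\fraka_P)/\fraka_P^{[p]}\cong S_P/\fraka_P$ can be ``chased through the surjection $S_P/\fraka_P^{[p]}\twoheadrightarrow S_P/I_P^{[p]}$'' to conclude $(f_1\cdots f_c)^{p-1}I_P\subseteq I_P^{[p]}$. But pushing that cyclic submodule forward along the surjection only tells you that the image of $(f_1\cdots f_c)^{p-1}$ in $S_P/I_P^{[p]}$ is annihilated by $\fraka_P$; it does not, by itself, force annihilation by the larger ideal $I_P$, which is exactly what you need. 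A correct completion of your local step still requires an extra ingredient: either the linkage identity $\fraka_P:(\fraka_P:I_P)=I_P$ (i.e.\ the paper's argument, localized), or the equivalent Gorenstein-duality computation in $A=S_P/\fraka_P^{[p]}$ that $0:_A\overline{I_P^{[p]}}=\overline{(\fraka_P:I_P)^{[p]}}$ via flatness of Frobenius, after which $(f_1\cdots f_c)^{p-1}I_P(\fraka_P:I_P)^{[p]}\subseteq (f_1\cdots f_c)^{p-1}\fraka_P\subseteq\fraka_P^{[p]}$ finishes. Either way, the substance of the missing step is the same linkage input the paper uses directly.
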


\begin{proof}
     Let $J$ be the ideal $\fraka : I$ of $S$. We have the following chain of containments of ideals in $S$:
\begin{eqnarray*} 
        \fraka^{[p]}:\fraka &\subseteq& \fraka^{[p]}: IJ \\
        &=&  (\fraka^{[p]}: J): I \\
        &\subseteq&  (\fraka^{[p]}: J^{[p]}): I \\
        &=&  (\fraka: J)^{[p]}: I \\
        &=&  I^{[p]}: I,
\end{eqnarray*}
where the second-to-last equality follows from the flatness of the
Frobenius map on the regular ring $S$ and the last equality follows
from the symmetry of links since $I$ is an equidimensional ideal; see
\cite{PS74}. The result is now immediate from Theorem
\ref{theoremFedder}.
\end{proof}

\begin{cor}\label{CorollarySymbolic}
Let $S$ be a polynomial ring over a field of positive characteristic $p$ and let $\frakm_S$ denote its homogeneous maximal ideal. Let $I$ be a height $h$ prime ideal of $S$. Then the symbolic power $I^{(h(p-1))}$ is contained in $I^{[p]}: I$. In particular, if $I^{(h(p-1))}$ is not contained in $\frakm_S^{[p]}$, then $S/I$ is $F$-pure.  
\end{cor}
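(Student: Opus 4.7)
The plan is to localize at the prime $I$, exploit the regular local ring structure of $S_I$ to get a pigeonhole-style containment, and then descend back to $S$ via a simple but crucial observation about associated primes of $I^{[p]}$. Since $S$ is a polynomial ring over a field and $I \subseteq S$ is prime of height $h$, the localization $S_I$ is a regular local ring of dimension $h$, so $\frakm := IS_I$ is generated by a regular sequence $f_1,\dots,f_h$. A pigeonhole argument then gives the containment
\[
\frakm^{h(p-1)+1} \ \subseteq\ \frakm^{[p]},
\]
because any product of $h(p-1)+1$ of the $f_i$'s must repeat at least one factor $p$ times. Combined with the general identity $\frakm^{[p]} = (IS_I)^{[p]} = I^{[p]}S_I$, this provides the main local input.

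Next, I would take $f \in I^{(h(p-1))}$ and $g \in I$ and aim to show $fg \in I^{[p]}$. By the very definition of the symbolic power we have $f/1 \in \frakm^{h(p-1)}$, and $g/1 \in \frakm$, so $fg/1 \in \frakm^{h(p-1)+1} \subseteq I^{[p]}S_I$. Consequently there exists $v \in S\setminus I$ with $vfg \in I^{[p]}$.

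The descent step relies on the observation that every associated prime $P$ of $I^{[p]}$ must contain $I$: indeed, for any $a \in I$ we have $a^p \in I^{[p]} \subseteq P$, and primality of $P$ forces $a \in P$. Hence our element $v \notin I$ lies outside every associated prime of $I^{[p]}$, so $v$ is a nonzerodivisor on $S/I^{[p]}$, and the relation $vfg \in I^{[p]}$ yields $fg \in I^{[p]}$. Thus $f \cdot I \subseteq I^{[p]}$, i.e., $f \in I^{[p]}:I$, proving the first assertion $I^{(h(p-1))} \subseteq I^{[p]}:I$.

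The ``in particular'' clause is then immediate from Fedder's criterion (Theorem~\ref{theoremFedder}): if $I^{(h(p-1))} \not\subseteq \frakm_S^{[p]}$, then \emph{a fortiori} $I^{[p]}:I \not\subseteq \frakm_S^{[p]}$, so $S/I$ is $F$-pure. I do not anticipate any serious obstacle in this argument; the only conceptually delicate point is the associated primes observation enabling the descent from $S_I$ to $S$, which might otherwise be mistaken for a gap in an attempted proof that tries to work directly with ordinary powers of $I$.
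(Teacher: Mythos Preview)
Your argument has a genuine gap in the descent step. You correctly observe that every associated prime $P$ of $I^{[p]}$ satisfies $I \subseteq P$, but from this and $v \notin I$ you \emph{cannot} conclude $v \notin P$: the containment goes the wrong way. What you actually need is that every associated prime of $I^{[p]}$ is contained in $I$ (hence equals $I$), so that $v \notin I$ forces $v$ outside all associated primes. Your radical argument does not give this.

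The missing ingredient is precisely the flatness of the Frobenius on the regular ring $S$ (Kunz's theorem): flatness implies $\operatorname{Ass}(S/I^{[p]}) = \operatorname{Ass}(S/I) = \{I\}$, since $I$ is prime. This is exactly how the paper proceeds: it invokes flatness of Frobenius to identify the associated primes of $S/I^{[p]}$ with those of $S/I$, so the containment $I \cdot I^{(h(p-1))} \subseteq I^{[p]}$ can be checked after localizing at $I$, where it becomes your pigeonhole inclusion $(IS_I)^{h(p-1)+1} \subseteq (IS_I)^{[p]}$. Once you replace your associated-primes observation with the flatness argument, the rest of your proof is correct and essentially identical to the paper's.
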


\begin{proof} By the flatness of the Frobenius map on $S$, the set of associated prime ideals of $S/I^{[p]}$ equals that of $S/I$. So the containment of ideals
\[
I\cdot I^{(h(p-1))}\subseteq I^{[p]} 
\] 
may be verified locally. In the regular ring $(S_I, IS_I)$, the maximal ideal $IS_I$ is generated by $h$ elements. Recall that the ordinary and symbolic powers of ideals primary to the maximal ideal are equal. Notice that the containment
\[
  I^{h(p-1)+1}S_I \subseteq I^{[p]}S_I
\]
immediately follows from the pigeonhole principle. In fact, this
result holds more generally when $I$ is a radical ideal and $h$ is the
maximum of the heights of the associated prime ideals of
$I$. 
\end{proof}

Corollary \ref{CorollarySymbolic} is especially useful when we
know the primary decomposition of the powers of the ideal of
interest. We shall need it in proving the compatible $F$-splittings of
varieties of complexes by making use of the primary decomposition of
the powers of determinantal ideals. We note that Corollary \ref{CorollarySymbolic} appears implicitly in the proof
that Hankel determinantal rings are $F$-pure \cite[Theorem 4.1]{CMSV}. In fact, if an ideal $I$ satisfies the conclusion of Corollary \ref{CorollarySymbolic}, then its symbolic powers $\{I^{(n)}\}_{n \geq 0}$ define an $F$-split filtration, i.e., $I$ is a \emph{symbolic $F$-split} ideal---a notion which is stronger than $F$-split, see \cite[Corollary
5.10, Example 5.13]{dSMNB21}.

The following theorem is first proved in positive characteristic by an application of Theorem \ref{theoremFedder}; it is then proved in characteristic zero by using reduction mod $p$ techniques. We use this result in proving that several ideals considered in this paper have squarefree initial ideals (in any characteristic) with respect to the monomial orders $<_B$ constructed in $\S$\ref{Subsection
PfaffianMonomialOrder} and $\S$\ref{Subsection VoCMonomialOrder}. 

\begin{thm} \cite[Theorem 3.13]{Varbaro-Koley} \label{theorem:sqfreeinitial}
    Let $S$ be a polynomial ring over a field, $I$ a radical ideal and $<_B$ a monomial order in $S$. Let $h$ be the maximum of the heights of the associated primes of $I$. If the initial ideal $\ini(I^{(h)})$ contains a squarefree monomial, then the ideal $\ini(I)$ is a squarefree. 
\end{thm}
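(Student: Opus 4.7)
The plan is to establish the result in positive characteristic via Fedder's criterion, and then descend to characteristic zero by reduction modulo $p$. The main observation is that a squarefree monomial in $\ini(I^{(h)})$ produces a Fedder witness for $S/\ini(I)$ itself, not merely for $S/I$; and for a monomial ideal $J$, the quotient $S/J$ is $F$-pure precisely when $J$ is squarefree.

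Assume first that $\KK$ has positive characteristic $p$, and choose $f \in I^{(h)}$ with $\ini(f) = m$ a squarefree monomial. Since symbolic powers satisfy $I^{(a)} \cdot I^{(b)} \subseteq I^{(a+b)}$, the power $f^{p-1}$ lies in $I^{(h(p-1))}$ with $\ini(f^{p-1}) = m^{p-1}$; and because $m$ is squarefree, $m^{p-1}$ involves each variable to exponent at most $p-1$, so $m^{p-1} \notin \frakm_S^{[p]}$. Corollary \ref{CorollarySymbolic} supplies the containment $I \cdot I^{(h(p-1))} \subseteq I^{[p]}$, so $g f^{p-1} \in I^{[p]}$ for every $g \in I$, and passing to leading terms, $\ini(g) \cdot m^{p-1} \in \ini(I^{[p]})$.

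The key identification $\ini(I^{[p]}) = \ini(I)^{[p]}$ then follows from a Buchberger argument: for a Gr\"obner basis $\{g_i\}$ of $I$, the $S$-polynomial $S(g_i^p, g_j^p)$ is the $p$-th power of $S(g_i,g_j)$ by the additivity of Frobenius in characteristic $p$, and these reduce to zero. Granting this, one obtains $m^{p-1} \cdot \ini(I) \subseteq \ini(I)^{[p]}$, so $m^{p-1}$ lies in $\ini(I)^{[p]} : \ini(I)$ but not in $\frakm_S^{[p]}$, and Fedder's criterion (Theorem \ref{theoremFedder}) shows that $S/\ini(I)$ is $F$-pure. But a monomial ideal $J$ with $S/J$ $F$-pure must be radical (since $F$-purity implies reducedness), and for monomial ideals radical is synonymous with squarefree; thus $\ini(I)$ is squarefree.

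For $\KK$ of characteristic zero, the standard descent applies: spread $I$ out to an ideal over a finitely generated $\ZZ$-subalgebra $A \subseteq \KK$ and reduce modulo a sufficiently general maximal ideal of $A$. For a dense set of primes, radicality, the maximum height $h$ of associated primes, and the presence of a squarefree lead monomial in $\ini(I^{(h)})$ all survive the reduction, so the positive characteristic conclusion shows that the reduction of $\ini(I)$ is squarefree; since squarefreeness of a monomial ideal depends only on its exponent vectors, $\ini(I)$ itself is squarefree. The most delicate step I anticipate is the mod-$p$ bookkeeping needed to preserve the height of associated primes and to guarantee that the chosen witness $f$ persists after specialization; the Gr\"obner identification $\ini(I^{[p]}) = \ini(I)^{[p]}$, though essential, is a routine calculation.
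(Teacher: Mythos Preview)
The paper does not actually prove this statement: it is quoted from \cite{Varbaro-Koley} with only the one-sentence remark that the argument uses Fedder's criterion in positive characteristic followed by reduction modulo $p$ for characteristic zero. Your sketch is correct and fleshes out exactly that strategy; the only point worth flagging is that Corollary~\ref{CorollarySymbolic} is stated in the paper for prime ideals, but its proof explicitly notes the extension to radical ideals with $h$ the maximal height of an associated prime, which is what you need.
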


\section{The symplectic nullcone is strongly $F$-regular}

The aim of this section is to establish the $F$-regularity of the symplectic nullcone. We begin with recalling some known facts.

\subsection{Generalities}
Let $Y$ be a $2t\times n$ matrix of indeterminates and set~$S\colonequals \KK[Y]$. Let
\[
\Omega\colonequals\begin{pmatrix} 0 & \II \\ -\II & 0 \end{pmatrix}
\]
be the $2t\times 2t$ standard symplectic block matrix, where $\II$ is the
size $t$ identity matrix. There is a natural $\KK$-algebra isomorphism 
\[
R\colonequals \KK[Y^\tr \Omega Y]
\cong \KK[X]/\Pf_{2t+2}(X),
\]
where~$X$ is an $n\times n$ alternating matrix of indeterminates, and $\Pf_{2t+2}(X)$ the ideal
generated by its principal Pfaffians of size~$2t+2$. This isomorphism is induced by mapping the entries of the matrix $X$ to the corresponding entries of the alternating matrix $Y^\tr \Omega Y$. The symplectic
group
\[
\Sp_{2t}(\KK)\colonequals\{M\in\GL_{2t}(\KK)\ |\ M^\tr\Omega M=\Omega\}
\]
acts $\KK$-linearly on $S$ via  
\[
M\colon Y\mapsto MY\qquad\text{ for }\ M\in\Sp_{2t}(\KK).
\]
The invariant ring is precisely $R$ whenever $\KK$ is infinite,
see~\cite[\S6]{DeConciniProcesi76}
or~\cite[Theorem~5.1]{Hashimoto05}. The nullcone ideal of this group action is the ideal
\[
\frakP = \frakP(Y)\colonequals (Y^\tr\Omega Y)S,
\]
generated by the entries of the matrix $Y^\tr \Omega Y$ in
the polynomial ring $S$. The nullcone for this action of
$\Sp_{2t}(\KK)$ on $S$, the (natural) \emph{symplectic nullcone}, is the
ring $S/\frakP$. The symplectivc nullcone is a Cohen--Macaulay normal
domain with
\begin{eqnarray*}\label{eqn-S/p-dim}
\dim S/\frakP&=&\begin{cases}
2nt -\displaystyle{\binom{n}{2}}& \text{if }\ n\le t+1,\\
nt+\displaystyle{\binom{t+1}{2}}& \text{if }\ n\ge t,\\
\end{cases}
\end{eqnarray*}
according to \cite[Theorem 6.8]{HJPS} and \cite[Theorem 9.1(3)]{Kraft-Schwarz}. It is a complete intersection ring precisely when $n \leq t+1$ by \cite[Theorem 6.3]{HJPS}. The standard monomials for the symplectic nullcone are studied in \cite[\S 4.2]{Kim}. 

\begin{rmk}\label{RmkGeneratorsofNullconeIdeal} 
Let $Y$ be a size $2t\times n$ matrix of indeterminates over a field $\KK$. Notice that when $t=1$, we have

\begin{alignat*}2
Y^\tr\Omega Y\ &=\ \begin{pmatrix}
y_{1,1} & y_{2,1}\\
\vdots&\vdots\\
y_{1,n} & y_{2,n}
\end{pmatrix}
\begin{pmatrix}
0 & 1\\
-1 & 0
\end{pmatrix}
\begin{pmatrix}
y_{1,1} & \cdots & y_{1,n}\\
y_{2,1} & \cdots & y_{2,n}
\end{pmatrix}
=\ \begin{pmatrix}
0 & \Delta_{1,2} & \Delta_{1,3} & \hdots & \Delta_{1,n}\\
-\Delta_{1,2} & 0 & \Delta_{2,3} & \hdots & \Delta_{2,n}\\
-\Delta_{1,3} & -\Delta_{2,3} & 0 & \hdots & \Delta_{3,n}\\
\vdots & \vdots & \vdots & \ddots & \vdots\\
-\Delta_{1,n} & -\Delta_{2,n} & -\Delta_{3,n} & \hdots & 0
\end{pmatrix}.
\end{alignat*}
In particular, $Y^\tr\Omega Y$ is an alternating matrix where, for $i<j$, the matrix entry $(Y^\tr\Omega Y)_{ij}$ is
\[
\Delta_{i,j}\colonequals y_{1,i}y_{2,j}-y_{1,j}y_{2,i}.
\]
It follows that $\frakP$ coincides with the determinantal ideal $I_2(Y)$. So, $\frakP$ has height $n-1$ and defines an $F$-regular ring $\KK[Y]/\frakP$ \cite[\S 7]{HH94}. Note that the ring $\KK[Y^\tr\Omega Y]$ is the homogeneous coordinate ring of the Grassmannian $\Gr_\KK(2,n)$ under the Pl\"ucker embedding into $\PP_\KK^{\binom{n}{2}-1}$.

More generally, for $t\ge 1$, the ring $\KK[Y^\tr\Omega Y]$ is the homogeneous coordinate ring of the order $t-1$ secant variety $\Gr_\KK^{t-1}(2,n)$,  the closure of the union of linear spaces spanned by $t$ points of $\Gr_\KK(2,n)$ under the Pl\"ucker embedding (see \cite{CGG2}, \cite[\S 6.2]{HJPS}). Moreover, for $1\le i<j\le n$, the entry $(Y^\tr\Omega Y)_{i,j}$ equals $B(v_i,v_j)$, where $v_i$ and $v_j$ are the $i$-th and $j$-th columns of $Y$, and $B$ is the (nondegenerate) symplectic form 
\[
(v_1,v_2)\mapsto v_1^\tr\Omega v_2. 
\] 
It follows that the generators of $\frakP$ are sums of size two minors given by
\[
(Y^\tr\Omega Y)_{i,j}\ =\ (y_{1,i}y_{t+1,j}-y_{1,j}y_{t+1,i})\ +\ \cdots\ +\ (y_{t,i}y_{2t,j}-y_{t,j}y_{2t,i}) 
\quad \text{for}\; 1 \leq i<j\leq n. \qedhere \]
\end{rmk}

\subsection{The localization property}
We show that the symplectic nullcone has a localization property roughly analogous to that of the generic determinantal ring, as outlined in \cite[Proposition 2.4]{BrunsVetter}.
\begin{lem}
\label{lemma:matrix:invert}
Let $Y=(y_{i,j})$ be a $2t\times n$ matrix of indeterminates where $t\geq 2$ and $n\geq 2$; set $S\colonequals\ZZ[Y]$. Then there exists a $(2t-2)\times(n-1)$ matrix $Z$ with entries in $S[\frac{1}{y_{1,1}}]$, and elements $f_2,\dots,f_n$ in $S[\frac{1}{y_{1,1}}]$ such that:
\begin{enumerate}[\quad\rm(1)]
\item The entries of $Z$ and the elements $f_2,\ldots,f_n, y_{1,1},\ldots,y_{1,n}, y_{2,1},\dots,y_{2t,1}$ taken together are algebraically independent over $\ZZ$;
\item Along with $y_{1,1}^{-1}$, the above elements generate $S[\frac{1}{y_{1,1}}]$ as a $\ZZ$-algebra;
\item With $S':=\ZZ[Z]$, the ideal $\frakP(Y)S[\frac{1}{y_{1,1}}]$ equals $\frakP(Z)S[\frac{1}{y_{1,1}}]+(f_2,\dots,f_n)S[\frac{1}{y_{1,1}}]$, and we have an isomorphism

\[\frac{S}{\frakP(Y)}[\frac{1}{y_{1,1}}] \cong \frac{S'}{\frakP(Z)}[y_{1,1},\dots,y_{1,n}, y_{2,1},\dots,y_{2t,1},\frac{1}{y_{1,1}}]. \]
\end{enumerate}
\end{lem}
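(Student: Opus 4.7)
The plan is to perform a symplectic ``column reduction'' on $Y$ after inverting $y_{1,1}$, in direct analogy with the Bruns--Vetter reduction for generic determinantal rings. Write $v_1,\dots,v_n$ for the columns of $Y$, so that $\frakP(Y)$ is generated by the symplectic pairings $B(v_i,v_j):=v_i^\tr\Omega v_j$ for $1\le i<j\le n$. I designate $f_i:=B(v_1,v_i)=(Y^\tr\Omega Y)_{1,i}$ for $i=2,\dots,n$, which are $n-1$ of the generators of $\frakP(Y)$. Since $B(v_1,e_{t+1})=y_{1,1}$, after inverting $y_{1,1}$ the vector $u:=\tfrac{1}{y_{1,1}}e_{t+1}$ satisfies $B(v_1,u)=1$, so $\{v_1,u\}$ spans a non-degenerate symplectic plane over $S[1/y_{1,1}]$. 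Each column $v_i$ with $i\ge 2$ admits a unique decomposition $v_i=a_iv_1+b_iu+w_i$ with $w_i\in\langle v_1,u\rangle^{\perp}$; pairing against $u$ and $v_1$ yields $b_i=f_i$ and $a_i=y_{1,i}/y_{1,1}$.

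The bilinearity of $B$ together with $B(v_1,v_1)=B(u,u)=0$ gives
\[
B(v_i,v_j)\ =\ a_ib_j-a_jb_i+B(w_i,w_j)\ =\ \frac{y_{1,i}f_j-y_{1,j}f_i}{y_{1,1}}+B(w_i,w_j),
\]
so that the remaining generators of $\frakP(Y)$ reduce modulo $(f_2,\dots,f_n)$ to the pairings $B(w_i,w_j)$. The complement $\langle v_1,u\rangle^{\perp}$ consists of vectors $w$ with $w_1=0$ and with $w_{t+1}$ forced by the equation $B(v_1,w)=0$; using $(w_2,\dots,w_t,w_{t+2},\dots,w_{2t})$ as free coordinates, the restriction of $B$ becomes the \emph{standard} symplectic form on $\ZZ^{2t-2}$, because $w_1=w_1'=0$ kills precisely the two terms in $B(w,w')$ that involve $w_{t+1}$ or $w_{t+1}'$. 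I therefore set $Z$ to be the $(2t-2)\times(n-1)$ matrix whose column $i-1$ records these coordinates of $w_i$, i.e.\ whose entries are
\[
y_{k,i}-\frac{y_{1,i}}{y_{1,1}}y_{k,1}\qquad\text{for } k\in\{2,\dots,t,t+2,\dots,2t\},\ i\in\{2,\dots,n\}.
\]
With $\Omega'$ the standard size $2t-2$ symplectic block matrix, the identity $B(w_i,w_j)=(Z^\tr\Omega' Z)_{i-1,j-1}$, combined with $f_i\in\frakP(Y)$, yields the ideal equality claimed in (3).

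To finish, I check that the $\ZZ$-algebra map $\phi$ from the polynomial ring in the new symbols $\tilde Z$, $\tilde F_i$, $\tilde Y_{1,j}$, $\tilde Y_{k,1}$, localized at $\tilde Y_{1,1}$, to $S[1/y_{1,1}]$ (sending each new symbol to the corresponding element) is an isomorphism; this simultaneously establishes (1) and (2), since $2tn=(2t-2)(n-1)+(n-1)+n+(2t-1)$. Surjectivity: for $k\notin\{1,t+1\}$ and $i\ge 2$, the defining relation of $Z$ exhibits $y_{k,i}$ in the image, while the expansion
\[
f_i\ =\ y_{1,1}y_{t+1,i}-y_{t+1,1}y_{1,i}+\sum_{k=2}^{t}(y_{k,1}y_{t+k,i}-y_{t+k,1}y_{k,i})
\]
lets me solve (after dividing by $y_{1,1}$) for $y_{t+1,i}$ in terms of variables already in the image. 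These explicit formulae also define a two-sided inverse $\psi$, so $\phi$ is an isomorphism. Quotienting both sides by the ideal in (3) then produces the displayed isomorphism. The one genuinely non-obvious step is the identification of the restricted form on $\langle v_1,u\rangle^{\perp}$ as a standard symplectic form after the chosen parameterization; once that is in place, the rest is bookkeeping.
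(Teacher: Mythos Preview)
Your proof is correct and arrives at exactly the same matrix $Z$ and elements $f_j$ as the paper, but the packaging differs. The paper performs elementary \emph{column} operations on $Y$ (which leave $\frakP(Y)$ invariant since $\frakP(YM)=\frakP(Y)$ for invertible $M$) to clear the first row, obtaining $\widetilde{Y}$; it then reads off $Z$ as the submatrix of $\widetilde{Y}$ with rows $1$ and $t+1$ and the first column removed, and computes $\widetilde{Y}^\tr\Omega\widetilde{Y}$ directly. You instead split off the hyperbolic plane $\langle v_1,u\rangle$ and project each column onto its symplectic orthogonal complement, then use bilinearity to reduce $B(v_i,v_j)$ modulo $(f_2,\dots,f_n)$ to the restricted form. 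Your $w_i$ is precisely the paper's $\tilde v_i$ corrected in the $(t+1)$-st slot, so the two $Z$'s agree on the retained $2t-2$ rows. The advantage of your phrasing is that it explains conceptually why a standard symplectic form of rank $2t-2$ reappears (it is literally $B$ restricted to $\langle v_1,u\rangle^\perp$, and the parameterization kills the row-$1$/row-$(t+1)$ cross terms); the paper's advantage is that the invariance $\frakP(Y)=\frakP(\widetilde Y)$ is a one-line observation requiring no further bilinear bookkeeping. Both finish parts (1)--(2) by the same triangular change-of-variables argument, noting that $y_{t+1,i}$ (equivalently $z_{t+1,i}$) appears with unit coefficient $y_{1,1}$ in $f_i$.
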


\begin{proof}
Let us map the entries of the matrix $Y$ to the corresponding entries of $YM$, where $M$ is a matrix with $n$ rows. Clearly, the ideal $(YM)$ generated by the entries of the matrix $YM$ is contained in $(Y)$, hence the ideal $\frakP(YM)$ is contained in $\frakP(Y)$. It follows that if $M$ is invertible in $S$, then the ideals $\frakP(Y)$ and $\frakP(YM)$ are equal. In particular, $\frakP(Y)$ is unaffected by elementary column operations of the matrix $Y$. 

After inverting $y_{1,1}$, one may perform elementary column operations to transform $Y$ into a matrix where $y_{1,1}$ is the only nonzero entry in the first row; the resulting matrix is 

\[
\widetilde{Y} \colonequals\begin{pmatrix}
y_{1,1} & 0 & 0 & \cdots & 0\\
y_{2,1} & z_{2,2} & z_{2,3} & \cdots & z_{2,n}\\
\vdots & \vdots & \vdots & & \vdots\\
y_{t,1} & z_{t,2} & z_{t,3} & \cdots & z_{t,n}\\
y_{t+1,1} & z_{t+1,2} & z_{t+1,3} & \cdots & z_{t+1,n}\\
y_{t+2,1} & z_{t+2,2} & z_{t+2,3} & \cdots & z_{t+2,n}\\
\vdots & \vdots & \vdots & & \vdots\\
y_{2t,1} & z_{2t,2} & z_{2t,3} & \cdots & z_{2t,n}
\end{pmatrix} \quad \text{where} \quad z_{i,j} = y_{i,j} - \frac{y_{i,1}y_{1,j}}{y_{1,1}}.\
\]
By construction, the ideals $\frakP(Y)$ and $\frakP(\widetilde{Y})$
are equal in the ring $S[\frac{1}{y_{1,1}}]$.  Set $Z$ to be the submatrix of $\widetilde{Y}$ obtained by
deleting the first column, and rows $1$ and $t+1$.  Note that the
nonzero entries of the matrix $\widetilde{Y}^\tr\Omega\widetilde{Y}$
are those of $Z^\tr \overline{\Omega}Z$, where $\overline{\Omega}$ is
the standard symplectic block matrix of size $2t-2$, along with the polynomials
\[
f_j\colonequals (\widetilde{Y}^\tr\Omega\widetilde{Y})_{1,j}\ =\ y_{1,1}z_{t+1,j} + (y_{2,1}z_{t+2,j}-y_{t+2,1}z_{2,j}) + \dots + (y_{t,1}z_{2t,j}-y_{2t,1}z_{t,j})
\]
for $2 \leq j \leq n$. This proves assertion $(3)$. 

Assertions $(1)$ and $(2)$ are readily verified since the entries of the matrix $Z$ do not involve the element $z_{t+1,j}$ which appears (with a unit coefficient) in $f_j$ for $2 \leq j \leq n$.
\end{proof}

\subsection{Constructing the monomial order}
\label{Subsection PfaffianMonomialOrder}

In this subsection, we describe the recipe to create a monomial order $<_B$ that creates special lead terms for the generators of the symplectic nullcone ideal. The monomial order construction is quite technical; we illustrate it with an example first:

\begin{exa}
Let $Y$ be a $4 \times 4$ matrix of indeterminates and $\KK$ be any field; set $S = \KK[Y]$. To define a monomial order $<_B$ in the polynomial ring $S$, we first define an order on the variables as follows. Sort the entries of the matrix $Y$ into blocks $B_0, B_1, B_2$, and $B_3$ as suggested in the matrix

\[
\begin{pmatrix}
1 & 3 & 1 & 0\\
2 & 2 & 0 & 0\\
0 & 1 & 3 & 1\\
0 & 0 & 2 & 2
\end{pmatrix},
\]
where the $(i,j)$-entry of the matrix is the block into which $y_{i,j}$ is sorted. Thus, for instance, $y_{1,1}$ is in the block $B_1$, $y_{1,2}$ is in $B_3$, and so on. Now, for $\gamma \in B_\ell$ and $\delta \in B_{k}$, set $\gamma < \delta$ if $\ell <k$. Then, within each set $B_\ell$, fix an arbitrary order among the variables. This gives us a total variable order in the polynomial ring $S$. Our monomial order $<_B$ is the reverse lexicographical order induced by this
variable order. 

For a polynomial $f$, let $\ini(f)$ denote the
initial monomial of $f$ with respect to our monomial order. Then
\[
\frakP = \frakP(Y) = (d_{i,j} \; \vert \; 1 \leq i<j\leq 4)
\]
is an ideal of height $5$ in $S$; the generators $d_{i,j}$ are as displayed in Remark \ref{RmkGeneratorsofNullconeIdeal}. One has
\begin{gather*}
\ini(d_{1,2}) = y_{1,1}y_{3,2},\qquad \ini(d_{2,3}) = y_{1,2}y_{3,3},\qquad \ini(d_{3,4}) = y_{1,3}y_{3,4},\\
\ini(d_{1,3}) = y_{2,1}y_{4,3},\qquad  \ini(d_{2,4}) = y_{2,2}y_{4,4},\qquad\ini(d_{1,4}) = y_{2,1}y_{4,4}.
\end{gather*}

Let $\fraka$ be the ideal of $S$ generated by the elements
$d_{1,2},d_{2,3}, d_{3,4}, d_{1,3}$, and $d_{2,4}$. Since the initial
terms of the generators of $\fraka$ are pairwise coprime, the
generators of $\fraka$ form a Gr\"{o}bner basis and it follows that
the ideal $\fraka$ is generated by a regular sequence of maximum
length in $\frakP$.

The construction of this monomial order is crucial in establishing the $F$-regularity of the symplectic nullcone $S/\frakP$, as we show next. From now on, assume that the underlying field $\KK$ has positive characteristic $p$.

Note that the polynomial
\[
f := d_{1,2}d_{2,3}d_{3,4}d_{1,3}d_{2,4}
\]
is such that 
\[
f^{p-1} \in (\fraka^{[p]}:\fraka) \minus\frakm_S^{[p]},
\]
where $\frakm_S$ denotes the homogeneous maximal ideal of $S$. This is so because its initial term $\ini(f)$ is squarefree. It follows from Corollary \ref{corMain} that $S/\frakP$ is $F$-pure. In fact, since $y_{1,4}$ does not divide $\ini(f)$, we get 
\[
y_{1,4}f^{p-1} \in y_{1,4}(\fraka^{[p]}:\fraka) \subseteq y_{1,4}(\frakP^{[p]}:\frakP) \quad \text{while}\quad  y_{1,4}f^{p-1}\notin \frakm_S^{[p]}.
\]
Since the ring $\frac{S}{\frakP}[\frac{1}{y_{1,4}}]$ is a smooth extension of the determinantal ring defined by the size two minors of a $2 \times 3$ matrix of indeterminates by Lemma \ref{lemma:matrix:invert},
it follows from Theorem \ref{theoremGlassbrenner} that the symplectic nullcone $S/\frakP$ is strongly $F$-regular. 

Finally, since $f$ lies in the ideal $\frakP^5$, it is clear that $\ini(f)$ lies in $\ini (\frakP ^5)$, and hence in $\ini (\frakP ^{(5)})$. By Theorem \ref{theorem:sqfreeinitial}, it immediately follows that the symplectic nullcone ideal $\frakP$ has a squarefree initial ideal (with respect to the monomial order $<_B$) in any characteristic.   
\end{exa}

We next construct the monomial order $<_B$ illustrated in the above
example. For ease of notation in the next Lemma, we relabel the entries of lower half of
the $2t \times n$ matrix $Y = (y_{i,j})$ as follows: Let
$w_{i,j} = y_{i+t,n-j+1}$ for $1 \leq i \leq t$ and $1 \leq j \leq
n$. Then we have

\[
Y =\begin{pmatrix}
y_{1,1} & y_{1,2} & \cdots & y_{1,n}\\
\vdots & \vdots & & \vdots\\
y_{t,1} & y_{t,2} & \cdots & y_{t,n}\\
w_{1,n} & w_{1,n-1} & \cdots & w_{1,1}\\
\vdots & \vdots & &\vdots\\
w_{t,n} & w_{t,n-1} & \cdots & w_{t,1}
\end{pmatrix}.
\]
Recall from Remark \ref{RmkGeneratorsofNullconeIdeal} that the entries of the symplectic nullcone ideal $(Y^\tr\Omega Y)$ in $\KK[Y]$ are

\[
d_{i,j} = \det \begin{pmatrix} y_{1,i} & y_{1,j}\\ w_{1,n-i+1} &
  w_{1,n-j+1} \end{pmatrix} + \cdots + \det \begin{pmatrix} y_{t,i} &
  y_{t,j}\\ w_{t,n-i+1} & w_{t,n-j+1} \end{pmatrix},
\]
for $1 \leq i<j \leq n$.

The following computation is the technical heart of this section:
\begin{lem}\label{LemmaMonomialOrder}
 Let $\KK$ be any field. There exists a monomial order $<_B$  in $\KK[Y]$ such that for all $1 \leq i < j
 \leq n$ with $j-i \leq t$, we have
 \[
 \ini(d_{i,j}) =
 y_{j-i,i}w_{j-i,n-j+1},
 \]
 where $\ini(d_{i,j})$ denotes the initial
 monomial of $d_{i,j}$ with respect to the monomial order $<_B$.
\end{lem}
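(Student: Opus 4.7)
The plan is to construct $<_B$ by partitioning the variables of $\KK[Y]$ into blocks $B_0,B_1,\ldots,B_M$, and then to verify the initial-term claim by comparing the $2t$ monomials in the defining expansion of $d_{i,j}$ pairwise under the induced reverse lexicographic order. I would begin by naming, for each pair $(\ell,i)$ with $1\le\ell\le t$ and $1\le i\le n-\ell$, the \emph{claimed initial pair} $P_{\ell,i}=\{y_{\ell,i},\,y_{\ell+t,i+\ell}\}$ of $d_{i,i+\ell}$, placing every ``corner'' variable (at $(r,c)$ with $c>n-r$ for $r\le t$ or $c<r-t+1$ for $r>t$, i.e., not contained in any $P_{\ell,i}$) into $B_0$, and distributing the remaining variables into non-zero blocks $B_1,B_2,\ldots$ subject to the following key rule: each $P_{\ell,i}$ is assigned some block $B_c$ so that every other monomial in
\[
d_{i,i+\ell}\;=\;\sum_{k=1}^{t}\bigl(y_{k,i}\,y_{k+t,i+\ell}\;-\;y_{k,i+\ell}\,y_{k+t,i}\bigr)
\]
contains at least one variable in some block $B_{c'}$ with $c'<c$. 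I would then fix any total variable order compatible with the block order (variables of $B_0$ first, then $B_1$, then $B_2$, and so on, with within-block ties broken arbitrarily), and declare $<_B$ to be the induced reverse lexicographic order.

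Given this data, the verification is a short reverse-lex computation. For the claimed initial $m=y_{\ell,i}\,y_{\ell+t,i+\ell}$, which is supported entirely in the single non-zero block $B_c$, and for any competitor $m'$ in $d_{i,i+\ell}$, reading the exponent difference $e-e'$ in decreasing order of variables shows that the last nonzero block is some $B_{c^*}$ with $c^*\le c'<c$; at this block $m$ has exponent $0$ while $m'$ has positive exponent, so the last nonzero entry of $e-e'$ is strictly negative. Hence $m>_B m'$, and the lemma follows for every valid pair $(i,j)$.

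The hard part will be producing the required colouring explicitly. The key combinatorial observation is that for fixed $\ell$, two distinct pairs $P_{\ell,i}$ and $P_{\ell,i'}$ share a variable exactly when $|i-i'|=\ell$, so the pairs at each layer $\ell$ form a disjoint union of paths and admit a natural $2$-colouring. Assigning the two colours of each layer $\ell$ to disjoint ranges of block indices, with lower-$\ell$ layers receiving smaller indices, ensures that every ``plus'' competitor $y_{k,i}\,y_{k+t,i+\ell}$ with $k\ne\ell$ (which lives in the different row-layer $k$) exposes a variable in a block smaller than that of $P_{\ell,i}$. The ``minus'' competitors $y_{k,i+\ell}\,y_{k+t,i}$ typically contain a corner variable in $B_0$ because the swap $(i,i+\ell)\mapsto(i+\ell,i)$ pushes at least one endpoint outside the essential band; the remaining minus cases are absorbed by the layer ordering. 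A finite case analysis, organized by whether $k<\ell$ or $k>\ell$ and by corner membership, completes the verification, and the $4\times 4$ example exhibited above illustrates the scheme for $t=2$, $n=4$.
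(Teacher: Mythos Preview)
Your general framework---partition the variables into blocks, take the induced reverse lexicographic order, and then verify that each competitor monomial in $d_{i,j}$ has at least one variable in a strictly smaller block than the claimed initial pair---is exactly the paper's strategy. The gap is in the specific block assignment you sketch. You propose that ``lower-$\ell$ layers receive smaller indices,'' i.e., that the block number is essentially monotone in the row index $\ell$ (up to a 2-colouring within each layer). This cannot work. Take the ``plus'' competitor $y_{k,i}\,y_{k+t,j}$ with $\ell<k<j$ and $k\le n-i$: neither factor is a corner variable, and both lie in layer $k>\ell$, hence in your scheme both sit in blocks \emph{larger} than that of the claimed initial pair $P_{\ell,i}$. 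Under revlex this competitor would then beat $P_{\ell,i}$, not lose to it. The same failure occurs for ``minus'' competitors with $j-i<k<i$ and $k\le n-j$. (In the $4\times 4$ example, look at $d_{2,3}$: the claimed initial $y_{1,2}y_{3,3}$ lives in block $B_3$, while the layer-$2$ competitor $y_{2,2}y_{4,3}$ lives in $B_2$; the layer ordering is \emph{not} monotone in $\ell$.) Your side claim that pairs $P_{\ell,i}$ and $P_{\ell,i'}$ at the same layer share a variable when $|i-i'|=\ell$ is also incorrect: distinct pairs at a fixed layer are always disjoint.

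What the paper actually does is subtler: the block index of $y_{\ell,c}$ (and of $w_{\ell,c}$) depends on both $\ell$ and on whether the column $c$ lies to the left or right of the midpoint $(n-\ell+1)/2$, with the block number \emph{increasing} toward the middle from either side (explicitly, $2c+\ell-2$ on the left and $2n-2c-\ell$ on the right). This two-sided, non-monotone dependence is precisely what makes the case analysis go through: for $k>\ell$ one of the two variables in the competitor is forced past the midpoint, where the block index \emph{decreases} in $k$, so it drops below the block of $P_{\ell,i}$. Your layer-monotone scheme has no mechanism for this, and the ``finite case analysis'' you defer to would not close.
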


\begin{proof} 
 To define a monomial order $<_B$ in the polynomial ring $S=\KK[Y]$, we first define an order on the variables as follows. Sort the entries of the matrix $Y$ into blocks $B_0,B_1,
  \dots, B_{n-1}$ according to the following formula:
  \begin{gather}\label{eqn-blocks}
  \text{$y_{i,j},w_{i,j}$ are in block $B_\ell$ where }
  \ell = \begin{cases} \textrm{(a)}\quad 2j
    +i -2 & \text{if \; } 1 \leq j< \dfrac{n-i+1}{2}, \\ \textrm{(b)}\quad 2n-2j-i &
    \text{if \; } \dfrac{n-i+1}{2} \leq j < n-i+1, \\ \textrm{(c)}\quad 0 &
    \text{otherwise.}
  \end{cases}
  \end{gather}
  Now, for $\gamma \in B_\ell$ and $\delta \in B_{k}$, set $\gamma < \delta$ if $\ell <k$. Then, within each set $B_\ell$, fix an arbitrary order among
  the variables.  This gives us a total variable order in $S$.  Our monomial order $<_B$ is the reverse lexicographical order induced by this variable order in $S$. For a polynomial $f$, let $\ini(f)$ denote the initial monomial of $f$ with respect to our monomial order. 
  
  We claim that for $i<j$ and with
  \[
  d_{i,j} = \sum_{s=1}^t y_{s,i}w_{s,n-j+1} -
  \sum_{s=1}^t y_{s,j}w_{s,n-i+1},
  \]
  we have
  $\ini(d_{i,j}) = y_{j-i,i}w_{j-i,n-j+1}$. To prove our claim, we must show that
  \[
  y_{j-i,i}w_{j-i,n-j+1} \geq y_{s,i}w_{s,n-j+1}, \quad \text{and} \quad
  y_{j-i,i}w_{j-i,n-j+1} \geq y_{s,j}w_{s,n-i+1}
  \] for all $1 \leq s
  \leq t$. As the order is reverse lexicographical, we
  must show that, for all $1 \leq s \leq t$, $y_{j-i,i}$ and
  $w_{j-i,n-j+1}$ are greater than or equal to at least one of
  $y_{s,i}$ and $w_{s,n-j+1}$, as well as at least one of $y_{s,j}$
  and $ w_{s,n-i+1}$. This can be done by analyzing all possible cases one by one. We include all the details for the sake of completeness. 
  
  Our proof will proceed as follows: First, we will show that
  $y_{j-i,i}$ and $w_{j-i,n-j+1}$ are in the same block $B_\ell$. Then we
  will show that for $s \neq j-i$ at least one of $y_{s,i}$ and
  $w_{s,n-j+1}$ is in $B_k$ for some $k<\ell$. Lastly, we will show
  that at least one of $y_{s,j}$ and $ w_{s,n-i+1}$ is in $B_k$ for
  some $k<\ell$.
  
  \textbf{Part 1}. We claim that $y_{j-i,i}$ and $w_{j-i,n-j+1}$ are
  both in $B_{i+j-2}$ if $i+j < n+1$, and are in $B_{2n-i-j}$ otherwise. 
  
  To show this, we first show that $y_{j-i,i}$ and $w_{j-i,n-j+1}$
  are not in $B_0$. Suppose $y_{j-i,i} \in B_0$. This requires that
  $n-(j-i)+1 \leq i$, implying $n+1 \leq j$, which is
  impossible. Suppose $w_{j-i,n-j+1} \in B_0$. This requires that
  $n-(j-i)+1 \leq n-j+1$, implying $i \leq 0$, which also impossible.

  Now, $i+j<n+1$ is equivalent to $i<(n-(j-i)+1)/2$ as well as to $(n-(j-i)+1)/2<n-j+1$. The former means that the block of $y_{j-i,i}$ is decided via formula (a) in \eqref{eqn-blocks} and equals $B_{2i+(j-i)-2}=B_{i+j-2}$; the latter that the block of $w_{j-i,n-j+1}$ comes from formula (b) and is $B_{2n-2(n+1-j)-(j-i)}=B_{i+j-2}$. 
  
  If on the other hand $i+j>n+1$ then $y_{j-i,i}$ follows case (b) and
  $w_{j-i,n-j+1}$ follows case (a) by the same computation. The
  corresponding blocks are computed as $B_{2n-2(i)-(j-i)}=B_{2n-i-j}$
  and $B_{2(n-j+1)+(j-i)-2}=B_{2n-j-i}$.

  In the case $i+j=n+1$, $y_{j-i,i}$ and $w_{j-i,n-j+1}$ have the same label and are in block $B_{i+j-2}=B_{2n-i-j}$.
  
  \textbf{Part 2}. Here we will show that if $y_{j-i,i}$ and
  $w_{j-i,n-j+1}$ are in $B_\ell$, then $s \neq j-i$ implies that at
  least one of $y_{s,i}$ and $w_{s,n-j+1}$ is in $B_k$ where $k<\ell$.  
  
  If $y_{s,i}$ or $w_{s,n-j+1}$ is in $B_0$, then we are done. So
  assume that $y_{s,i}$ and $w_{s,n-j+1}$ are not in $B_0$. 
  
  \begin{itemize}
  \item Suppose $i+j < n+1$ and $s < j-i$.
    
    Then $y_{j-i,i}$ and $w_{j-i,n-j+1}$ are in
    $B_{i+j-2}$. By the proof of Part 1, $ i < \dfrac{n-(j-i)+1}{2} <
    \dfrac{n-s+1}{2} $, and so $y_{s,i} \in B_{2i+s-2}$. As $s < j-i$
    implies that $2i+s-2 < i+j-2$, $y_{s,i}$ is in a lower block than $y_{j-i,i}$. 
  \item  Suppose $i+j < n+1$ and $s > j-i$.
    
    Then, $y_{j-i,i}, w_{j-i,n-j+1}\in B_{i+j-2}$. By the
    proof of Part 1,
    $ n-j+1 > \dfrac{n-(j-i)+1}{2} > \dfrac{n-s+1}{2}$, and so
    $w_{s,n-j+1} \in B_{2j-s-2}$. As $s > j-i$ implies that
    $2j-s-2 < i+j-2$, $w_{s,n-j+1}$ is in a lower block than
    $w_{j-i,n-j+1}$.
  \item Suppose $i+j \geq n+1$ and $s <j-i$.
    
    Then, $y_{j-i,i}$ and $w_{j-i,n-j+1}$ are in $B_{2n-i-j}$. By the
    proof of Part 1,
    $n-j+1 \le \dfrac{n-(j-i)+1}{2} < \dfrac{n-s+1}{2}$, and so
    $w_{s,n-j+1} \in B_{2n-2j+s}$. As $s < j-i$ implies that
    $2n-2j+s < 2n-i-j$, $w_{s,n-j+1}$ is in a lower block than
    $w_{j-i,n-j+1}$.
  \item Suppose $i+j \geq n+1$ and $s > j-i$. 
    
    Then, $y_{j-i,i}$ and $w_{j-i,n-j+1}$ are in $B_{2n-i-j}$. By the proof of Part 1, $
    i\geq \dfrac{n-(j-i)+1}{2} > \dfrac{n-s+1}{2}$, and so $y_{s,i}
    \in B_{2n-2i-s}$. As $s > j-i$ implies hat $2n-2i-s< 2n-i-j$, $y_{s,i}$ is in a lower blck than $y_{j-i,i}$.
  \end{itemize}
  
  \textbf{Part 3}. Here we will show that if
  $y_{j-i,i},w_{j-i,n-j+1}\in B_\ell$, then for $s\neq j-i$ at least one of
  $y_{s,j}$ and $ w_{s,n-i+1}$ is in $B_k$ for some $k<\ell$.
  
  If $y_{s,j}$ or $w_{s,n-i+1}$ is in $B_0$, there is nothing to show by the proof of Part 1. So
  assume that $y_{s,j}$ and $w_{s,n-i+1}$ are not in $B_0$.
  \begin{itemize}
  \item Suppose that $i+j < n+1$.
    
    By Part 1, $y_{j-i,i}$ and $w_{j-i,n-j+1}$ are in $B_{i+j-2}$.
    Since $i+j < n+1$, an inequality $n-i+1 < \frac{n-s+1}{2}$  would imply that $s < 2i-n-1 < i-j<0$, which we know to be false. Hence, $n-i+1 \geq
    \frac{n-s+1}{2}$, and thus the block of $w_{s,n-i+1}$ is decided by case (b) and equals $B_{2n-2(n-i+1)-s}=B_{2i-s-2}$. As $s > 0 > i-j$ implies that $2i-s-2< i+j-2$, $w_{s,n-i+1}$ is in a lower block than $w_{j-i,n-j+1}$.
\item Suppose that $i+j \geq n+1$.
    
    By Part 2, $y_{j-i,i}$ and $w_{j-i,n-j+1}$ are in $B_{2n-i-j}$. 
    Since $i+j \geq n+1$, an inequality $j < \frac{n-s+1}{2}$ would imply that $s <n-2j+1 \le i-j<0$, which we know to be false. Thus, $j \geq
    \frac{n-s+1}{2}$, and thus the block of $y_{s,j}$ is decided by case (b) and equals $
    B_{2n-2j-s}$. As $s > 0 > i-j$ implies that $2n-2j-s < 2n-i-j$, and so $y_{s,j}$ is in a lower block than $y_{j-i,i}$.
    \end{itemize}

    This finishes the analysis of each possible case.
\end{proof} 

Having established the monomial order $<_B$, we next exhibit a natural choice of a maximal regular sequence in the symplectic nullcone ideal.

\begin{lem}\label{LemmaRegularSequence}
 Let $\KK$ be any field and $\fraka \subseteq \frakP(Y)$ be the ideal of $\KK[Y]$ generated by the set 
 \[
 \alpha = \{d_{i,j} \;\vert \; 1 \leq i < j \leq n \text{ and } j-i \leq t\}.
 \] 
 Then the ideal $\fraka$ has the same height as $\frakP(Y)$ and $\alpha$ is a regular sequence.
\end{lem}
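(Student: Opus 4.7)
The plan is to combine Lemma~\ref{LemmaMonomialOrder} with a standard Gröbner-basis argument. First, I would count $|\alpha|$: the pairs $(i,j)$ with $1\le i<j\le n$ and $j-i\le t$ number $\sum_{d=1}^{\min(t,n-1)}(n-d)$, which equals $\binom{n}{2}$ when $n\le t+1$ and $nt-\binom{t+1}{2}$ when $n\ge t+2$. Comparing with the dimension formulas for $S/\frakP(Y)$ recalled at the start of $\S$3.1, in both regimes this agrees exactly with $\height\frakP(Y)$.

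Next, Lemma~\ref{LemmaMonomialOrder} identifies the leading monomials as $\ini(d_{i,j}) = y_{j-i,i}\,w_{j-i,n-j+1}$, and the key observation is that these $|\alpha|$ monomials are \emph{pairwise coprime}. Indeed, for distinct index pairs $(i,j)\ne(i',j')$ in the indexing set, an equality $y_{j-i,i}=y_{j'-i',i'}$ forces $i=i'$ and $j-i=j'-i'$, hence $(i,j)=(i',j')$; similarly equality of $w$-factors forces $j=j'$ and then $i=i'$; and no $y$-variable equals any $w$-variable. Because the leading terms are pairwise coprime, all S-polynomials reduce to zero and $\alpha$ is a Gröbner basis of $\fraka$ with respect to $<_B$ by Buchberger's criterion. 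Therefore $\ini(\fraka)$ is generated by the pairwise coprime monomials $\{y_{j-i,i}w_{j-i,n-j+1}\}$, which form a monomial regular sequence in $S$; in particular $\ini(\fraka)$ is a complete intersection of height $|\alpha|$. Using $\height(\fraka)=\height(\ini(\fraka))$, we conclude $\height(\fraka)=|\alpha|=\height\frakP(Y)$, and since $\fraka$ is generated by exactly $\height(\fraka)$ elements in the Cohen--Macaulay ring $\KK[Y]$, the generating set $\alpha$ is a regular sequence.

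The genuine technical content sits in Lemma~\ref{LemmaMonomialOrder}, which has already been established; here there is no serious obstacle. The only item that requires slight care is matching the cardinality $|\alpha|$ to $\height\frakP(Y)$ in both regimes $n\le t+1$ and $n\ge t+2$, which is a direct arithmetic check against the two branches of the dimension formula recalled above.
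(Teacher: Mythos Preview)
Your proposal is correct and follows essentially the same approach as the paper: count $|\alpha|$ and match it to $\height\frakP(Y)$, invoke Lemma~\ref{LemmaMonomialOrder} to see that the leading monomials are pairwise coprime, conclude that $\alpha$ is a Gr\"obner basis with $\height(\fraka)=\height(\ini(\fraka))=|\alpha|$, and finish by the standard fact that $|\alpha|$ generators of a height-$|\alpha|$ ideal in a Cohen--Macaulay ring form a regular sequence. The only cosmetic differences are that the paper cites \cite[Theorem 6.8]{HJPS} for the height of $\frakP(Y)$ and phrases the final step via a $\dim=\depth$ chain, whereas you compute the height from the dimension formula and invoke the Cohen--Macaulay property directly.
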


\begin{proof}
    First, we note that  
    \[
    \vert \alpha \vert = \begin{cases}
     \binom{n}{2} & \text{if \;} n \leq t+1,\\
     nt - \binom{t+1}{2} & \text{if \:} n \geq t,
    \end{cases}
    \] 
    where $\vert \alpha \vert$ denotes the cardinality of $\alpha$. By \cite[Theorem 6.8]{HJPS}, we have that $|\fraka| = \height(\frakP(Y))$.

    By Lemma \ref{LemmaMonomialOrder}, there exists a monomial order
    in $\KK[Y]$ such that for all $1 \leq i < j \leq n$ with $j-i \leq t$, we have
    \[
    \ini(d_{i,j}) = y_{j-i,i}w_{j-i,n-j+1}.
    \] 
    Notice that if $(i,j)\neq (k,\ell)$ then 
    $\ini(d_{i,j})$ and $\ini(d_{l,k})$ are relatively prime. Thus, the elements of $\alpha$  form a Gr\"obner basis for $\fraka$ (see, for example, \cite[Corollary 2.3.4.]{HerzogHibi11}), and the initial terms of $\alpha$ form a regular sequence.  Therefore,
    \[
    \dim(R/\fraka) = \dim(R/\ini(\fraka)) = \depth(R/\ini(\fraka)) \leq \depth(R/\fraka) \leq \dim(R/\fraka),
    \]
    so that we must have equality throughout (see \cite[Corollary 3.3.4]{HerzogHibi11}). In particular,
    \[
    \vert \alpha \vert \geq \height(\fraka) = \height(\ini(\fraka)) = \vert \alpha \vert,\] and the lemma follows from \cite[Theorem 6.8]{HJPS}.
\end{proof}

We are now ready to prove the main results of this section.

\begin{thm} \label{thm-PfaffianNullcone-Fregular} Let $Y$ be a matrix
  of indeterminates of size $2t \times n$ for positive integers $t$
  and $n$. Let $\KK$ be a field and set $S:=\KK[Y]$.
  \begin{enumerate}[\quad\rm(1)]
  \item If $\KK$ is an $F$-finite field of positive characteristic,
    the ring $S/\frakP(Y)$ is strongly
    $F$-regular.
  \item If $\KK$ has characteristic zero, $S/\frakP(Y)$ has rational singularities.
    \end{enumerate}
    
\end{thm}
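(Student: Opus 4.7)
The plan for part (1) is to apply Glassbrenner's criterion (Theorem \ref{theoremGlassbrenner}) by induction on $t$, combining the three ingredients just assembled: the localization of Lemma \ref{lemma:matrix:invert}, the monomial order $<_B$ of Lemma \ref{LemmaMonomialOrder}, and the maximal regular sequence $\alpha \subseteq \frakP(Y)$ of Lemma \ref{LemmaRegularSequence}. Part (2) will then follow by spreading over $\ZZ$ and reducing modulo $p$.

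For the base case $t = 1$, Remark \ref{RmkGeneratorsofNullconeIdeal} identifies $\frakP(Y)$ with the ideal $I_2(Y)$ of $2 \times 2$ minors of the $2 \times n$ matrix $Y$, and $S/I_2(Y)$ is strongly $F$-regular by \cite[\S 7]{HH94}. For the inductive step, fix $t \geq 2$ and assume the symplectic nullcone of every $2(t-1) \times (n-1)$ matrix is strongly $F$-regular (the degenerate cases $n \leq 1$ are trivial since $\frakP(Y) = 0$). Write $\fraka = (\alpha)$ and set $f := \prod_{d \in \alpha} d$. By Lemma \ref{LemmaMonomialOrder}, the initial monomials $\ini(d_{i,j}) = y_{j-i,i}\, w_{j-i,n-j+1}$ are pairwise coprime, so $\ini(f)$ is squarefree; by Lemma \ref{LemmaRegularSequence}, $\alpha$ is a regular sequence of length $\height \frakP(Y)$. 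The elementary identity $(a_1 \cdots a_h)^{p-1} \in (a_1, \ldots, a_h)^{[p]} : (a_1, \ldots, a_h)$ for any regular sequence then gives $f^{p-1} \in \fraka^{[p]} : \fraka$, and Corollary \ref{corMain} (applicable because $\frakP(Y)$ is prime and $\fraka$ is a complete intersection of the same height) upgrades this to $f^{p-1} \in \frakP(Y)^{[p]} : \frakP(Y)$.

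To apply Glassbrenner's criterion I need a homogeneous $s \in S \setminus \frakP(Y)$ with $(S/\frakP(Y))[1/s]$ strongly $F$-regular and $s\, f^{p-1} \notin \frakm_S^{[p]}$. I take $s := y_{1,n}$: under the relabeling $w_{i,j} = y_{i+t,\, n-j+1}$, each $\ini(d_{i,j})$ equals $y_{j-i,i} \cdot y_{j-i+t,\, j}$, whose column indices $i$ and $j$ lie in $\{1, \ldots, n-1\}$ and $\{2, \ldots, n\}$ respectively, so a short check shows $y_{1,n}$ never divides $\ini(f)$. Hence $\ini(y_{1,n}\, f^{p-1}) = y_{1,n} \cdot \ini(f)^{p-1}$ has no variable raised to power $\geq p$, and therefore $y_{1,n}\, f^{p-1} \notin \frakm_S^{[p]}$. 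A symmetric version of Lemma \ref{lemma:matrix:invert} applied at the $(1,n)$-entry (obtained by a column permutation) identifies $(S/\frakP(Y))[1/y_{1,n}]$ with a smooth Laurent-polynomial extension of the symplectic nullcone of a $2(t-1) \times (n-1)$ matrix, which is strongly $F$-regular by the induction hypothesis. Theorem \ref{theoremGlassbrenner} then delivers part (1).

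For part (2), spread $\frakP(Y)$ out over $\ZZ[Y]$; part (1) shows that the reduction modulo every prime $p$ is strongly $F$-regular, hence $F$-rational, and \cite[Theorem 4.3]{Smith} then yields rational singularities over fields of characteristic zero. The main obstacle of the overall argument is the combinatorial construction of a single characteristic-free monomial order in which a maximal regular subsequence of $\frakP(Y)$ simultaneously acquires pairwise coprime leading terms; that difficulty has been absorbed into Lemmas \ref{LemmaMonomialOrder} and \ref{LemmaRegularSequence}, after which the theorem follows by a formal assembly of the Fedder/Glassbrenner-type criteria with the localization lemma.
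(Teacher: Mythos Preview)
Your proposal is correct and follows essentially the same approach as the paper: induction on $t$ via Lemma \ref{lemma:matrix:invert} at a corner entry, the regular sequence $\alpha$ of Lemma \ref{LemmaRegularSequence} together with Corollary \ref{corMain} to land in $\frakP^{[p]}:\frakP$, and the monomial order of Lemma \ref{LemmaMonomialOrder} to verify $y_{1,n}f^{p-1}\notin\frakm_S^{[p]}$ before invoking Glassbrenner. The paper's proof applies Lemma \ref{lemma:matrix:invert} directly at $y_{1,n}$ without explicitly mentioning the column permutation you note, but otherwise the arguments coincide.
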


\begin{proof}
  Assertion $(2)$ follows from $(1)$ since $F$-regular rings are $F$-rational and $F$-rational rings have rational singularities by \cite[Theorem 4.3]{Smith} We therefore concentrate on the case where the
  characteristic of $\KK$ is $p>0$.

  We proceed by induction on $t$. The statement is clear for $t=1$,
  since then by Remark \ref{RmkGeneratorsofNullconeIdeal}, the ideal
  $\frakP(Y)$ equals the determinantal ideal $I_2(Y)$ of the size two
  minors of $Y$. The corresponding ring is strongly $F$-regular as it
  is a Segre product of standard graded polynomial rings.

  Now assume that the assertion holds for some $t\geq 2$. By Lemma
  \ref{lemma:matrix:invert}, we have
    \[
      \frac{S}{\frakP(Y)} [\frac{1}{y_{1,n}}] \cong
      \frac{\KK[Z]}{\frakP(Z)}[y_{1,1},\dots,y_{1,n},
      y_{2,n},\dots,y_{2t,n},\frac{1}{y_{1,n}}]
    \] 
    where $Z$ is a matrix of indeterminates of size $(2t-2)\times
    n$. It follows by induction that the ring
    $\frac{S}{\frakP(Y)} [\frac{1}{y_{1,n}}]$ is strongly $F$-regular.

    In order to apply Theorem
    \ref{theoremGlassbrenner}, we must show that
    \[
      y_{1,n}(\frakP(Y)^{[p]}:\frakP(Y)) \not\subseteq \frakm_S^{[p]},
    \]
    where $\frakm_S$ is the homogeneous maximal ideal of $S$. By
    Corollary \ref{corMain}, it suffices to find an ideal $\fraka$ in
    $\frakP(Y)$ generated by a regular sequence $\alpha$ of length
    equal to the height of $\frakP(Y)$ such that
    \[
      y_{1,n}(\fraka^{[p]}:\fraka) \not\subseteq \frakm_S^{[p]}.
    \] 
    Let
    $\alpha = \{d_{i,j} \;\vert \; 1 \leq i < j \leq n \text{ and }
    j-i \leq t\}$ and $\fraka=\alpha S$. Consider the polynomial
    \[f :=  \prod_{d_{i,j}\in \alpha}d_{i,j}.\]
    Clearly
\[
y_{1,n}f^{p-1} \in y_{1,n}(\fraka^{[p]}:\fraka).
\] 
Recall that for polynomials $g$ and $h$ and a fixed monomial order $<_B$ in $S$, we
have $\ini(gh) =\ini(g)\ini(h)$. We choose the monomial order $<_B$ as
constructed in Lemma \ref{LemmaMonomialOrder}. Then we get:
\begin{eqnarray*}
    \ini(y_{1,n} f^{p-1}) &=& y_{1,n}\ini(f)^{p-1}\\
    &=& y_{1,n} \left(\prod_{\substack{1 \leq i <j\le n \\ j-i \leq t}}(y_{j-i,i}w_{j-i,n-j+1})\right)^{p-1} \notin \frakm_S^{[p]}. 
\end{eqnarray*} 
Since $\frakm_S^{[p]}$ is a monomial ideal, an element lies in
$\frakm_S^{[p]}$ if and only if each of its terms does, and so
\[
  y_{1,n}f^{p-1}\notin\frakm_S^{[p]}.
\]
We are done by Theorem \ref{theoremGlassbrenner}.
\end{proof}

\begin{cor} \label{Cor:PfaffianNullconeSqfree}
The symplectic nullcone ideal $\frakP(Y)$ has a squarefree initial ideal with respect to the monomial order $<_B$.    
\end{cor}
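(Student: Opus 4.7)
The plan is to apply Theorem \ref{theorem:sqfreeinitial} directly, reusing the construction already developed in the proof of Theorem \ref{thm-PfaffianNullcone-Fregular}. Since the symplectic nullcone $S/\frakP(Y)$ is a domain, $\frakP(Y)$ is prime, so the maximum $h$ of the heights of its associated primes is just $h=\height(\frakP(Y))$. By Lemma \ref{LemmaRegularSequence}, this height equals $|\alpha|$, where
\[
\alpha = \{d_{i,j}\ \vert\ 1\le i<j\le n\ \text{and}\ j-i\le t\}.
\]
Thus it is enough to exhibit a squarefree monomial lying in $\ini(\frakP(Y)^{(|\alpha|)})$.

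The natural candidate is the same polynomial used in the proof of Theorem \ref{thm-PfaffianNullcone-Fregular}, namely
\[
f := \prod_{d_{i,j}\in\alpha} d_{i,j}.
\]
Since $f\in\frakP(Y)^{|\alpha|}\subseteq\frakP(Y)^{(|\alpha|)}$, its initial term $\ini(f)$ lies in $\ini(\frakP(Y)^{(|\alpha|)})$. Multiplicativity of $\ini(\cdot)$ together with Lemma \ref{LemmaMonomialOrder} gives
\[
\ini(f) \;=\; \prod_{d_{i,j}\in\alpha} \ini(d_{i,j}) \;=\; \prod_{d_{i,j}\in\alpha} y_{j-i,i}\,w_{j-i,n-j+1}.
\]
The key point, already exploited in Lemma \ref{LemmaRegularSequence} to conclude that $\alpha$ is a Gr\"obner basis for $\fraka$, is that the monomials $\ini(d_{i,j})$ indexed by distinct pairs in $\alpha$ involve pairwise disjoint sets of variables. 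Consequently, $\ini(f)$ is a product of pairwise coprime degree-two squarefree monomials, hence itself squarefree.

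Applying Theorem \ref{theorem:sqfreeinitial} to the radical (in fact prime) ideal $\frakP(Y)$ then yields that $\ini(\frakP(Y))$ is squarefree with respect to $<_B$. I do not anticipate any genuine obstacle here: the hard technical work has already been carried out in the construction of $<_B$ (Lemma \ref{LemmaMonomialOrder}) and in identifying the regular sequence $\alpha$ (Lemma \ref{LemmaRegularSequence}), so the corollary is essentially a packaging statement. The only small check worth spelling out is the pairwise coprimality of the $\ini(d_{i,j})$, which follows because for $(i,j)\neq(k,\ell)$ with both pairs in $\alpha$ one has $(j-i,i)\neq(\ell-k,k)$ and $(j-i,n-j+1)\neq(\ell-k,n-\ell+1)$, so the four factors involve four distinct variables.
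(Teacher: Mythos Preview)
Your proof is correct and follows essentially the same approach as the paper's: form the product $f=\prod_{d_{i,j}\in\alpha}d_{i,j}$, observe that $f\in\frakP^h\subseteq\frakP^{(h)}$, note that $\ini(f)$ is squarefree by the pairwise coprimality established in Lemma~\ref{LemmaRegularSequence}, and apply Theorem~\ref{theorem:sqfreeinitial}. You spell out the coprimality check a bit more explicitly, but the argument is the same.
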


\begin{proof}
Let $h$ be the height of $\frakP = \frakP(Y)$ and the polynomial $f$ be as constructed in the proof of Theorem \ref{thm-PfaffianNullcone-Fregular}, i.e., \[f :=  \prod_{d_{i,j}\in \alpha}d_{i,j}.\]

Then since $f$ lies in the ideal $\frakP^h$, it is clear that $\ini(f)$ lies in $\ini (\frakP ^h)$, and hence in $\ini (\frakP ^{(h)})$. By Theorem \ref{theorem:sqfreeinitial}, it immediately follows that $\frakP$ has a squarefree initial ideal.
\end{proof}

We end this section with the following:

\begin{que}
Let $Y$ be a $2t \times n$ matrix of indeterminates for positive integers $t$ and $n$, and let $\frakP$ denote the symplectic nullcone ideal in the polynomial ring $\KK[Y]$. Denote by 
\[\calR ^S(\frakP):= \bigoplus_{k \geq 0} \frakP^{(k)} \quad \text{and} \quad G^S(\frakP):= \bigoplus_{k \geq 0} \frakP^{(k)}/\frakP^{(k+1)}\]
the \emph{symbolic Rees algebra} and the \emph{symbolic associated graded algebra} of $\frakP$ respectively. Are these rings Noetherian?
\end{que}

The proof of Theorem \ref{thm-PfaffianNullcone-Fregular} shows that the symplectic nullcone is \emph{symbolic $F$-split} (\cite[Corollary 5.10]{dSMNB21}). It immediately follows by \cite[Theorem 4.7]{dSMNB21} that the symbolic Rees algebra and the symbolic associated graded algebra of the ideal $\frakP$ are $F$-split (hence reduced). However, we do not know if either of these blowup algebras are Noetherian.

\section{The divisor class group of the symplectic nullcone}

In this section, we give a characteristic-free description of the
divisor class group and the Gorenstein property of the symplectic
nullcone. This section may be viewed as an application of the technique
of principal radical systems introduced by Hochster--Eagon in \cite{Hochster-Eagon} and
studied for the symplectic nullcone in \cite[Theorem 6.7]{HJPS}. 

\begin{thm}\label{TheoremClassGroup}
    Let $Y$ be a $2t \times n$ matrix of indeterminates for positive integers $t$ and $n$, and $\KK$ be any field. Let $R_{t,n}$ denote the symplectic nullcone $\KK[Y]/\frakP(Y)$. 
    \begin{enumerate}[\quad\rm(1)]
    
        \item If $n\geq t+1$, the divisor class group of $R_{t,n}$ is the integers. Otherwise, $R_{t,n}$ is a unique factorization domain.

         \item Let $Y|_t$ denote the submatrix of $Y$ consisting of the first $t$ columns. If $n \geq t+1$, the ideal $\frakp := I_{t}(Y|_t)$ of $R_{t,n}$ is prime of height one and generates the divisor class group. Furthermore, the canonical class of $R_{t,n}$ is given by $\frakp^{(n-t-1)}$. 

        \item $R_{t,n}$ is Gorenstein ring if and only if $n \leq t+1$.    
        \end{enumerate}
\end{thm}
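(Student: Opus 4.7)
The plan is to prove parts (1) and (2) together by induction on $t$, and deduce (3) as a corollary. The engine of the induction is Lemma \ref{lemma:matrix:invert} combined with Nagata's theorem on divisor class groups under localization.

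For the base case $t=1$, Remark \ref{RmkGeneratorsofNullconeIdeal} identifies $\frakP(Y)$ with $I_2(Y)$, so $R_{1,n}$ is the classical generic determinantal ring of maximal minors of a $2 \times n$ matrix; the conclusions of the theorem for $t=1$ then reduce to the classical descriptions of the divisor class group, its generator, and the canonical class of such rings (\cite{Bruns}, see also \cite{BrunsVetter}). For the inductive step, assume (1) and (2) for $R_{t-1, n'}$, all $n'\geq 1$. By Lemma \ref{lemma:matrix:invert}, $R_{t,n}[1/y_{1,1}]$ is a polynomial extension of $R_{t-1, n-1}$, so $\Cl(R_{t,n}[1/y_{1,1}]) \cong \Cl(R_{t-1, n-1})$, which is known by induction. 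Nagata's exact sequence
\[
\bigoplus_{\frakq} \ZZ \cdot [\frakq] \;\to\; \Cl(R_{t,n}) \;\to\; \Cl(R_{t,n}[1/y_{1,1}]) \;\to\; 0,
\]
with $\frakq$ ranging over the height-one primes of $R_{t,n}$ containing $y_{1,1}$, relates the two class groups.

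The key sub-steps are: (a) verify $\frakp := I_t(Y|_t)R_{t,n}$ is prime of height one, by identifying $\frakP + I_t(Y|_t)$ with an intermediate prime in the principal radical system used in the proof of \cite[Theorem 6.7]{HJPS}; (b) using Lemma \ref{lemma:matrix:invert} directly, check that $\frakp \cdot R_{t,n}[1/y_{1,1}]$ extends the inductive generator of $\Cl(R_{t-1, n-1})$; and (c) verify that every minimal prime of $y_{1,1}$ in $R_{t,n}$ is principal, using the structure of $\frakP + (y_{1,1})$ under specialization and the symmetry of the symplectic action on $Y$. When $n\geq t+1$, (b) and (c) together give $\Cl(R_{t,n}) = \ZZ\cdot [\frakp]$; when $n\leq t$, the induction gives $\Cl(R_{t,n}[1/y_{1,1}]) = 0$ and step (c) gives $\Cl(R_{t,n}) = 0$, alternatively via Grothendieck's factoriality criterion applied to the complete intersection $R_{t,n}$ (whose singular locus has codimension at least four by a direct Jacobian computation). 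The canonical class formula follows by tracing $\omega_{R_{t,n}}$ through the localization: polynomial extensions preserve canonical class, so by induction $\omega_{R_{t,n}}[1/y_{1,1}] \cong \frakp^{(n-t-1)}R_{t,n}[1/y_{1,1}]$, and reflexivity of $\omega_{R_{t,n}}$ combined with (c) yields $[\omega_{R_{t,n}}] = (n-t-1)[\frakp]$ in $\Cl(R_{t,n})$.

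Part (3) follows at once: $R_{t,n}$ is Cohen--Macaulay by \cite[Theorem 6.8]{HJPS}, so Gorenstein iff its canonical class vanishes; for $n\leq t+1$ this holds by the complete intersection property cited earlier, and for $n\geq t+2$ the class $(n-t-1)[\frakp]$ is nonzero in $\ZZ$ by (1). The main obstacle is step (a)---establishing primality and height one of $\frakp$ by a careful analysis of the principal radical system of \cite[Theorem 6.7]{HJPS}---followed by step (c), enumerating and classifying the minimal primes of $y_{1,1}$ in $R_{t,n}$ and verifying they are principal.
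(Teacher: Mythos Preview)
Your overall strategy---induction on $t$ via Lemma \ref{lemma:matrix:invert} combined with Nagata's exact sequence, with the determinantal base case---is exactly the paper's approach. The execution differs at one decisive point: your step (c). The paper disposes of this with a single observation (Remark \ref{rmk:y11_Prime}): the principal ideal $(y_{1,1})$ is itself \emph{prime} in $R_{t,n}$, by direct appeal to \cite[Theorem 6.7(2)]{HJPS}. Hence there is exactly one height-one prime over $y_{1,1}$, it is principal, the kernel of the Nagata map is trivial, and $\Cl(R_{t,n}) \cong \Cl(R_{t,n}[1/y_{1,1}]) \cong \Cl(R_{t-1,n-1})$ on the nose. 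Your formulation of (c)---enumerating the minimal primes of $y_{1,1}$ and arguing each is principal ``using specialization and the symmetry of the symplectic action''---is vague and would be difficult to execute without this primality fact; note in particular that $\Sp_{2t}(\KK)$ acts on the \emph{rows} of $Y$ via $Y\mapsto MY$, so its action does not permute the columns and gives no obvious leverage on primes over a single entry.

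A second, smaller difference concerns your step (a). The paper does not invoke the principal radical system to prove directly that $\frakp = I_t(Y|_t)R_{t,n}$ is prime of height one; it is not clear that $\frakP + I_t(Y|_t)$ appears among the ideals $I_\sigma + J_a$ of \cite[Theorem 6.7]{HJPS}. Instead, the paper traces $\frakp$ through the localization isomorphism to $I_{t-1}(Z|_{t-1})$ in $R_{t-1,n-1}$ and deduces primality, height, the generator statement, and the canonical class all at once by induction from the base case $t=2$, where $\frakq = I_1(Z|_1)$ is visibly a height-one prime generating $\Cl(R_{1,n-1})$. This is cleaner than reparsing the combinatorics of the principal radical system, and it is how you should organize steps (a) and (b).
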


Before proving the theorem, we make a crucial observation:

\begin{rmk}\label{rmk:y11_Prime}
The principal ideal $(y_{1,1})$ of $R_{t,n}$ is prime by \cite[Theorem 6.7(2)]{HJPS}. Indeed, in the notation of \cite[Theorem 6.7]{HJPS}, set $a\colonequals1$ and
\[
\sigma\colonequals \begin{cases}
(0,1,2,\dots,n-1,n) & \text{ if }\ n\le t,\\
(0,1,2,\dots,t-1,n) & \text{ if }\ n>t.\\
\end{cases}
\]
Then the ideal 
\[I_{\sigma}+J_a=I_{\sigma}+J_{s_1}=\frakP+(y_{1,1})\]
is prime in the polynomial ring $\KK[Y]$. The assertion follows.
\end{rmk}

\begin{proof}[Proof of Theorem \ref{TheoremClassGroup}]
    Let $W$ be a multiplicatively closed set of $R_{t,n}$. Recall the Nagata exact sequence \cite{Nagata} of divisor class groups   
    \[
    0 
    \longrightarrow U \longrightarrow \Cl(R_{t,n}) \longrightarrow \Cl(W^{-1}R_{t,n}) \longrightarrow 0,
    \]
    where $U$ is the subgroup of $\Cl(R_{t,n})$ consisting of the classes of pure height one ideals which have a nonempty intersection with $W$. Let $W$ be the multiplicatively closed set of $R_{t,n}$ consisting of the powers of $y_{1,1}$.
    
    For the remainder of the proof, fix $t \geq 2$. The principal ideal $y_{1,1}R_{t,n}$ is prime by Remark \ref{rmk:y11_Prime}. Thus, the only pure height one ideal containing $y_{1,1}$ is principal and $U$ is the trivial group $\{[y_{1,1}] = 0\}$. Since the class group is unaffected by a polynomial extension or by inverting a prime element, it follows from Lemma \ref{lemma:matrix:invert} (3) that the class groups $\Cl(W^{-1}R_{t,n})$ and $\Cl(R_{t-1,n-1})$ are isomorphic. This gives us an explicit inductive isomorphism of class groups
    
    \begin{gather}\label{eqn-Cl-reduction}
      \Cl(R_{t,n}) \cong \Cl(R_{t-1,n-1}) \text{ with } [\frakq]
      \mapsto [W^{-1}\frakq].
    \end{gather}

    Therefore, we get
    \begin{gather}\label{eqn-possible-Cl}
        \Cl(R_{t,n}) \cong  \begin{cases}
        \Cl(R_{1,n-t+1}) & \text{if \;} n>t+1, \\
        \Cl(R_{1,2})   & \text{if \;} n=t+1,\\
        \Cl(R_{t-n+1,1}) & \text{if \;} n<t+1.
      \end{cases}
      \end{gather}
  
    Note that for $n>t+1$, $R_{1,n-t+1}$ is the non-Gorenstein
    determinantal ring $\KK[Y_{2 \times (n-t+1)}]/I_2(Y)$ by Remark \ref{RmkGeneratorsofNullconeIdeal} (see \cite[Corollary 8.9]{BrunsVetter}), while $R_{1,2}$ is the determinantal hypersurface given by $\KK[y_{1,1},y_{1,2},y_{2,1},y_{2,2}]/(y_{1,1}y_{2,2}-y_{1,2}y_{2,1})$, and $R_{t-n+1,1}=\KK[y_{1,1},y_{2,1},\ldots,y_{t.n+1,1}]$ is a regular ring. Since the class group of the generic determinantal ring is the integers $\ZZ$ (this can also be computed directly for size two minors using the K\"{u}nneth formula for local cohomology modules \cite[Theorem
    4.1.5]{GotoWatanabe}), assertion $(1)$ follows immediately.  

    To prove assertion $(3)$,  recall that the Cohen--Macaulay ring $R_{t,n}$ is Gorenstein if and only if its canonical class is trivial. As the canonical module localizes, the canonical class of $R_{t,n}$ maps to that of $R_{t-1,n-1}$ in Display \eqref{eqn-Cl-reduction}. Obviously, the non-Gorenstein ring $R_{1,n-t+1}$ has a nontrivial canonical class while the Gorenstein ring $R_{1,2}$ and the regular ring $R_{t-n+1,1}$ have trivial canonical classes. Therefore assertion $(3)$ follows immediately from Display \ref{eqn-possible-Cl}.   

    We now prove assertion $(2)$, so assume $n\geq t+1$. Let the $(2t-2)\times (n-1)$ matrix $Z$
    and the elements $f_2, \ldots, f_n$ be as in Lemma
    \ref{lemma:matrix:invert}. We have an isomorphism of rings
    
    \begin{align*}
      \KK[Y][\frac{1}{y_{1,1}}] &\stackrel{\simeq}{\to} \KK[Z][y_{1,1}, \ldots, y_{1,n},y_{2,1}, \ldots, y_{2t,1}, f_2, \ldots, f_n, \frac{1}{y_{1,1}}]\\
      \intertext{induced by elementary column operations of the matrix $Y$ that sends}
      \frakP(Y) + I_t(Y\vert_t) &\mapsto \frakP(Z) + I_{t-1}(Z\vert_{t-1}) +(f_2, \ldots, f_n).
    \end{align*}
 By Lemma \ref{lemma:matrix:invert} (3), this map descends to the isomorphism
    \begin{align*}
      R_{t,n}[\frac{1}{y_{1,1}}] &\stackrel{\simeq}{\to} R_{t-1,n-1}[y_{1,1}, \ldots, y_{1,n},y_{2,1}, \ldots, y_{2t,1}, \frac{1}{y_{1,1}}]\\
      \intertext{with}
      \frakp := I_t(Y\vert_t) &\mapsto I_{t-1}{(Z\vert_{t-1}}). 
    \end{align*}
        
    Put $t=2$ in the above isomorphism. Clearly the ideal  $\frakq := I_1(Z\vert_1)=(z_{1,1},z_{2,1})$ is prime of height one in the ring $R_{1,n-1}$ 
    defined by the size two minors of the $2 \times (n-1)$ generic
    matrix $Z$. Further, $\frakq$ generates the class group of $R_{1,n-1}$ and the
    canonical class is given by $\frakq^{{(n-3})}$ (this can be
    computed directly from the fact that $R_{1,n-1}$ is a Segre
    product of polynomial rings; alternatively see
    \cite[Theorem 8.8]{BrunsVetter}). Each of the claims in assertion $(2)$ now follow by induction along the above isomorphism since, again, the canonical class of $R_{t,n}$ maps to that of $R_{t-1,n-1}$.
\end{proof}

\begin{rmk}
  Let $Y$ be a $2t \times n$ matrix of indeterminates for positive
  integers $t$ and $n$. It follows from Theorem
  \ref{TheoremClassGroup} and \cite[Theorem 6.3]{HJPS} that the
  symplectic nullcone $\KK[Y]/\frakP(Y)$ is Gorenstein if and only if it
  is $\QQ$-Gorenstein if and only if it is a complete intersection
  ring if and only if $n \leq t+1$. Further, note that $n=t+1$ gives
  us the only Gorenstein symplectic nullcone which is not a unique
  factorization domain, and that $n=t+2$ gives us the only (non-UFD) symplectic
  nullcone whose class group is generated by the canonical class.
\end{rmk}

\section{Compatible $F$-splittings of the varieties of complexes,\\ and the generic nullcone}

In this section, we prove that the generic nullcone is $F$-pure
and that each of its irreducible components are $F$-regular. We begin with recalling some known facts about the generic nullcone and the varieties of complexes.

\subsection{Generalities}\label{sec-splitting-generalities}
For positive integers $m$, $t$, and $n$, let $Y$ and $Z$ be $m\times t$ and $t\times n$ matrices of indeterminates respectively. Set $S$ to be the polynomial ring $\KK[Y,Z]$, and take $R$ to be the $\KK$-subalgebra generated by the entries of the product matrix $YZ$. There is a natural $\KK$-algebra isomorphism
\[R := \KK[YZ] \cong \KK[X]/I_{t+1}(X)\]
where $X$ is an $m \times n$ matrix of indeterminates, and $I_{t+1}(X)$ is the ideal generated by its size $t+1$ minors. This isomorphism is induced by mapping the entries of the matrix $X$ to the corresponding entries of the matrix $YZ$. The general linear group $\GL_t(\KK)$ acts~$\KK$-linearly on $S$ via
\[
M\colon\begin{cases} Y & \mapsto YM^{-1}\\ Z & \mapsto MZ\end{cases}
\]
where $M\in\GL_t(\KK)$. When the field $\KK$ is infinite, $R$ is precisely the ring of invariants, see \cite[\S 3]{DeConciniProcesi76} or \cite[Theorem 4.1]{Hashimoto05}.
The nullcone of the natural embedding
\[R = \KK[YZ] \subseteq \KK[Y,Z] = S\] is the typically
non-equidimensional ring $\KK[Y,Z]/(YZ)$. In this article, we refer to
this ring as the (natural) \emph{generic nullcone}. When $\KK$
has characteristic zero, the group $\GL_t(\KK)$ is linearly reductive
and thus the determinantal ring $R$ splits from $S$ as an
$R$-module. When $\KK$ has positive characteristic, this embedding
typically does \emph{not} split by \cite[Theorem 1.1]{HJPS}. This is
due to the Cohen--Macaulay property of the minimal prime ideals of the
nullcone ideal (\cite[Theorem 6.2]{Huneke}), in conjunction with the
flatness of Frobenius. We next discuss the primary decomposition of
the generic nullcone ideal.

Observe that any point $P$ in the zero set of the ideal $(YZ)S$ in
$\KK^{mt+tn}$ may be regarded as an ordered pair of matrices
$P = (A,B)$, where $A$ and $B$ are points in 
$\KK^{mt}$ and $\KK^{tn}$ respectively, such that the product matrix
$AB$ is zero. Let $r$ and $s$ denote the ranks of the matrices $A$ and
$B$ respectively. A point of the zero set of $(YZ)S$ maybe regarded
as a complex
\[
  \KK^n \xlongrightarrow[\text{}]{B} \KK^t
  \xlongrightarrow[\text{}]{A} \KK^m.
\] Consequently, we must have that $r+s \leq t$. Consider the ideals 
\[
\frakp_{r,s} = \frakp_{r,s}(Y,Z):= I_{r+1}(Y)+I_{s+1}(Z)+\YZ
\]
of $S$, where $r \leq \min \{m,t\}$, $s \leq \min \{t,n \}$, and
$r+s \leq t$. These are precisely the ideals defining the varieties of
complexes (of length two) introduced by Buchsbaum--Eisenbud in
\cite{Buchsbaum-Eisenbud}.
Notice that 
\[ 
\frakp_{r,s} \subseteq  \frakp_{r',s'} \quad \text{ whenever } \quad r' \leq r, s' \leq s,
\]
while, by \cite[Lemma
2.3]{DeConciniStrickland}, 
\[
\height(\frakp_{r,s})=(m-r)(t-r)+(n-s)(t-s)+rs.
\]
The
above discussion gives a one-to-one correspondence between the
closed $\KK$-points of the zero set of  $(YZ)S$ with the closed $\KK$-points of the varieties of
complexes of length two. Over an algebraic closure of $\KK$, the Nullstellensatz implies that the radical of the generic nullcone is $\bigcap_{r+s=t} \frakp_{r,s}(Y,Z)$. But $(YZ)S$ is radical by 
\cite{Musili-Sheshadri} (see also \cite[Theorem
4.1]{Mehta-Trivedi2}). The resulting equality of ideals over the closure of $\KK$ implies the same for $\KK$ since all ideals in question are defined over $\ZZ$:
\[
  (YZ)S = \bigcap_{r+s=t} \frakp_{r,s}(Y,Z).
\]

The varieties of complexes are Cohen--Macaulay normal domains in any
characteristic by \cite[Theorems 6.2, 7.1]{Huneke} and \cite[Theorem
2.7]{DeConciniStrickland}. Their divisor class groups and Gorenstein
property are determined independently in \cite{Bruns2} and
\cite{Yoshino}. Kempf showed that the varieties of complexes
$S/\frakp_{r,s}$ have rational singularities in characteristic zero
(\cite{Kempf2}, \cite{Kempf1}). In \cite{MehtaTrivedi} it is shown
that in positive characteristic, they are $F$-rational relative to the
resolution given by Kempf, and that they are also $F$-split.

\subsection{The localization property}

We start with a localization property for the varieties of complexes analogous to that of the symplectic nullcone in Lemma \ref{lemma:matrix:invert}.

\begin{lem}\label{lemma:localization of VoC}
  Let $Y=(y_{i,j})$ and $Z=(z_{i,j})$ be matrices of indeterminates of
  sizes $m \times t$ and $t \times n$ respectively with $m\geq 2$, $t\geq 2$, and $n\geq 2$; set
  $S := \ZZ[Y,Z]$. Let $Z'$ be the submatrix of $Z$ obtained by
  deleting the first row. Then there exists a matrix $Y'$ of size
  $(m-1) \times (t-1)$ and elements $f_1, \ldots f_n$ in
  $S[\frac{1}{y_{1,1}}]$ such that:
    \begin{enumerate}[\quad\rm(1)]
         \item The entries of $Y'$, the entries of $Z'$, and the elements $f_1, \ldots, f_n$ taken together are algebraically independent over $\ZZ$; 

         \item Along with $y_{1,1}$ and $y_{1,1}^{-1}$, the above elements generate $S[\frac{1}{y_{1,1}}]$ as a $\ZZ$-algebra;
         
         \item With $S' := \ZZ[Y',Z']$, the ideal
           $\frakp_{r,s}(Y,Z)S[\frac{1}{y_{1,1}}]$ equals
           $\frakp_{r-1,s}(Y',Z')S[\frac{1}{y_{1,1}}] +
           (f_1, \ldots, f_n)S[\frac{1}{y_{1,1}}]$, and we have
           an isomorphism
           \[\frac{S}{\frakp_{r,s}(Y,Z)}[\frac{1}{y_{1,1}}] \cong
             \frac{S'}{\frakp_{r-1,s}(Y',Z')}[y_{1,1}, \ldots,
             y_{m,1},y_{1,2}, \ldots, y_{1,t}, \frac{1}{y_{1,1}}]. \]
        
    \end{enumerate}
\end{lem}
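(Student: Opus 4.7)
The plan is to mimic the symplectic localization proof (Lemma \ref{lemma:matrix:invert}) by exploiting the very $\GL_t(\KK)$-action that defines the invariant ring. The key observation is that simultaneously replacing $Y$ by $YM^{-1}$ and $Z$ by $MZ$ for an invertible $M$ preserves the product $YZ$ (hence the ideal $(YZ)$), and also preserves each of $I_{r+1}(Y)$ and $I_{s+1}(Z)$ individually, since these ideals are invariant under column operations on $Y$ and row operations on $Z$ respectively; thus such a paired change of variables leaves the whole ideal $\frakp_{r,s}(Y,Z)$ intact. Working in $S[\tfrac{1}{y_{1,1}}]$, I would choose $M^{-1}$ to be the upper unitriangular matrix with $(M^{-1})_{1,j}=-y_{1,j}/y_{1,1}$ for $j\geq 2$, so that $\widetilde{Y}\colonequals YM^{-1}$ has first row $(y_{1,1},0,\ldots,0)$, unchanged first column, and lower-right $(m-1)\times(t-1)$ block
\[
Y'\;=\;\bigl(\widetilde{y}_{i,j}\bigr)_{i,j\geq 2}, \qquad \widetilde{y}_{i,j}\;=\;y_{i,j}-\frac{y_{1,j}\,y_{i,1}}{y_{1,1}}.
\]
Simultaneously $\widetilde{Z}\colonequals MZ$ has rows $i\geq 2$ equal to those of $Z$ (so its bottom $(t-1)\times n$ block is exactly $Z'$), while its first row is
\[
f_k\;\colonequals\;z_{1,k}+\sum_{j=2}^{t}\frac{y_{1,j}}{y_{1,1}}\,z_{j,k}, \qquad k=1,\ldots,n.
\]

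Assertions (1) and (2) then follow at once from the observation that the change of variables from $\{y_{i,j},z_{i,k}\}$ to $\{y_{i,1},y_{1,j},\widetilde{y}_{i,j}\,(i,j\geq 2),z_{i,k}\,(i\geq 2),f_k\}$ is invertible over $\ZZ[y_{1,1}^{-1}]$ via the formulas $y_{i,j}=\widetilde{y}_{i,j}+y_{1,j}y_{i,1}/y_{1,1}$ for $i,j\geq 2$ and $z_{1,k}=f_k-\sum_{j\geq 2}(y_{1,j}/y_{1,1})z_{j,k}$; hence these elements, augmented by $y_{1,1}^{\pm 1}$ and the remaining first row/column entries $y_{2,1},\ldots,y_{m,1},y_{1,2},\ldots,y_{1,t}$, form an algebraically independent generating set for $S[\tfrac{1}{y_{1,1}}]$. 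For (3), I would identify the three contributors to $\frakp_{r,s}(\widetilde{Y},\widetilde{Z})=\frakp_{r,s}(Y,Z)$ modulo $(f_1,\ldots,f_n)$ inside $S[\tfrac{1}{y_{1,1}}]$. The entries of $\widetilde{Y}\widetilde{Z}=YZ$ are $y_{1,1}f_k$ in row $1$ and $y_{i,1}f_k+(Y'Z')_{i-1,k}$ in rows $i\geq 2$, so after inverting $y_{1,1}$ the first-row entries generate the same ideal as the $f_k$ and, modulo $(f_1,\ldots,f_n)$, the remaining entries reproduce $(Y'Z')$; similarly, $\widetilde{Z}$ becomes block-diagonal with $Z'$ after killing the $f_k$, whence $I_{s+1}(\widetilde{Z})+(f_1,\ldots,f_n)=I_{s+1}(Z')+(f_1,\ldots,f_n)$ by a simple row-expansion argument.

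The step requiring the most care is the identification $I_{r+1}(\widetilde{Y})S[\tfrac{1}{y_{1,1}}]=I_r(Y')S[\tfrac{1}{y_{1,1}}]$. Here one uses that $y_{1,1}$ is a unit and that the first row of $\widetilde{Y}$ is concentrated at the $(1,1)$-entry, which allows \emph{further} row operations on $\widetilde{Y}$ alone---subtracting $(y_{i,1}/y_{1,1})\cdot(\text{row }1)$ from row $i$ for each $i\geq 2$---that convert $\widetilde{Y}$ into the block-diagonal matrix $\mathrm{diag}(y_{1,1},Y')$ without changing $I_{r+1}(\widetilde{Y})$; cofactor expansion then yields $I_{r+1}(\widetilde{Y})=y_{1,1}\cdot I_r(Y')+I_{r+1}(Y')=I_r(Y')$ after inverting $y_{1,1}$ (using $I_{r+1}(Y')\subseteq I_r(Y')$). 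Combining everything gives $\frakp_{r,s}(Y,Z)S[\tfrac{1}{y_{1,1}}]=\frakp_{r-1,s}(Y',Z')S[\tfrac{1}{y_{1,1}}]+(f_1,\ldots,f_n)S[\tfrac{1}{y_{1,1}}]$, and the claimed ring isomorphism then follows because the $f_k$ form part of an algebraically independent polynomial generating set over $S'/\frakp_{r-1,s}(Y',Z')$, so quotienting by them simply eliminates those $n$ variables. The genuine subtlety of the argument is bookkeeping which matrix operations are legal at which stage: the \emph{paired} moves $Y\mapsto YM^{-1},\ Z\mapsto MZ$ preserve all three ingredients of $\frakp_{r,s}$, whereas the \emph{unpaired} row operations used in the analysis of $I_{r+1}(Y)$ are available only when treating that ideal in isolation. (The boundary case $r=0$ is trivial, as then $y_{1,1}\in I_1(Y)\subseteq\frakp_{0,s}$ makes both sides vanish after inverting $y_{1,1}$.)
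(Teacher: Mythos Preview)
Your argument is correct and is essentially the same localization-and-reduction proof as the paper's, with one cosmetic variation worth noting: the paper acts on $Y$ by \emph{row} operations (the left $\GL_m$-action $Y\mapsto MY$), clearing the first \emph{column} of $Y$ while leaving $Z$ untouched, and then handles $I_{s+1}(Z)$ by rewriting $z_{1,k}$ in terms of the $f_k$; you instead use the middle $\GL_t$-action $Y\mapsto YM^{-1},\ Z\mapsto MZ$, which clears the first \emph{row} of $Y$ and simultaneously places the $f_k$ directly into the first row of $\widetilde Z$, at the cost of an extra unpaired row operation to analyze $I_{r+1}(\widetilde Y)$. The resulting $Y'$ is literally the same matrix in both proofs, and your $f_k$ differ from the paper's only by the unit factor $y_{1,1}$, so the two arguments are interchangeable.
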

\begin{proof}
  Let us map the entries of the matrix $Y$ to the corresponding
  entries of $MY$, where $M$ is a matrix with $m$ columns. Clearly,
  the ideal $(MY)$ generated by the entries of the matrix $MY$ is
  contained in $(Y)$. This maps the ideal $I_{i+1}(Y)$ to
  $I_{i+1}(MY)$ and the ideal $(YZ)$ to $(MYZ)$. It follows that if
  $M$ is invertible then the ideals $\frakp_{r,s}(Y,Z)$ and
  $\frakp_{r,s}(MY,Z)$ are equal. In particular, the ideal
  $\frakp_{r,s}(Y,Z)$ is unaffected by elementary row operations of
  the matrix $Y$.

After inverting $y_{1,1}$, one may perform elementary row operations to transform $Y$ into a matrix where $y_{1,1}$ is the only nonzero entry in the first column; the resulting matrix then is 

\[\widetilde{Y} = \begin{pmatrix}
    y_{1,1} & y_{1,2} & \cdots & y_{1,t} \\
    0      & y'_{2,2} & \cdots & y'_{2,t}\\
    \vdots & \vdots  &  & \vdots\\
    0 & y'_{m,2} & \cdots & y'_{m,t}
\end{pmatrix} \quad \text{where} \quad y'_{i,j} = y_{i,j} - \frac{y_{i,1}y_{1,j}}{y_{1,1}}. \]

Let $Y'$ be the submatrix of $\widetilde{Y}$ obtained by deleting the first row and column. Note that the ideal $I_{r+1}(Y)S[\frac{1}{y_{1,1}}]$ is generated by the size $r+1$ minors of the matrix $\widetilde{Y}$, and hence equals $I_r(Y')S[\frac{1}{y_{1,1}}]$. As discussed above, in the ring $S[\frac{1}{y_{1,1}}]$, the ideals $(YZ)$ and $(\widetilde{Y}Z)$ are equal. Let $Z'$ be the submatrix of $Z$ obtained by deleting the first row. Note that the entries of the matrix

\[
  \widetilde{Y}Z = \begin{pmatrix}
    y_{1,1} & y_{1,2} & \cdots & y_{1,t} \\
    0      & y'_{2,2} & \cdots & y'_{2,t}\\
    \vdots & \vdots  &  & \vdots\\
    0 & y'_{m,2} & \cdots & y'_{m,t}
\end{pmatrix} \begin{pmatrix}
    z_{1,1} & z_{1,2} & \cdots & z_{1,n} \\
    z_{2,1} & z_{2,2} & \cdots & z_{2,n}\\
    \vdots & \vdots  &  & \vdots\\
    z_{t,1} & z_{t,2} & \cdots & z_{t,n}
  \end{pmatrix}
\] 
are exactly those of the matrix $Y'Z'$ along with the elements
$f_1, \ldots f_n$, where
\[
  f_i := y_{1,1}z_{1,i} + \sum_{j=2}^t y_{1,j}z_{j,i}
\]
is the dot product of the first row of $\widetilde{Y}$ with the $i$-th column of $Z$. Thus, in the ring $S[\frac{1}{y_{1,1}}]$, we have 
\[
  (YZ) = (\widetilde{Y}Z) = (Y'Z') + (f_1, \ldots, f_n),
\]  
and the matrix $Z$ can be rewritten as

\[
  Z = \begin{pmatrix}
    \frac{f_1}{y_{1,1}} - \sum_{j=2}^t \frac{y_{1,j}z_{j,1}}{y_{1,1}} & \cdots & \frac{f_n}{y_{1,1}} - \sum_{j=2}^t \frac{y_{1,j}z_{j,n}}{y_{1,1}} \\
    z_{2,1} & \cdots & z_{2,n}\\
    \vdots   & & \vdots\\
    z_{t,1} & \cdots & z_{t,n}
  \end{pmatrix}.
\]

By the additivity of the determinant in any fixed row, a minor of $Z$
of size $s+1$ is exactly the corresponding minor of

\[ \begin{pmatrix}
    \frac{f_1}{y_{1,1}} & \cdots & \frac{f_n}{y_{1,1}} \\
    z_{2,1} & \cdots & z_{2,n}\\
    \vdots   & & \vdots\\
    z_{t,1} & \cdots & z_{t,n}
\end{pmatrix}. \]

Therefore, we get 

\[\frakp_{r,s}(Y,Z)S[\frac{1}{y_{1,1}}] = \frakp_{r,s}(\widetilde{Y},Z)S[\frac{1}{y_{1,1}}] = \frakp_{r-1,s}(Y',Z')S[\frac{1}{y_{1,1}}] + (f_1, \ldots, f_n)S[\frac{1}{y_{1,1}}].\] 

The second part of assertion $(3)$ immediately follows. Assertions $(1)$ and $(2)$ are readily verified since the matrices $Y'$ and $Z'$ do not involve the elements $z_{1,j}$ which appear (with unit coefficients) in $f_j$ for $1 \leq j \leq n$.  
\end{proof}

\subsection{Constructing the monomial order}\label{Subsection VoCMonomialOrder}

In this subsection, we describe a recipe for a monomial order $<_B$ in the polynomial ring $S$ that creates special lead terms
for the generators of the varieties
of complexes $\frakp_{r,s}$. The construction of this monomial order is quite
technical; we illustrate it with an example first. 
\begin{ntn}
We use the following notation: Let $A$ be an $m\times n$ matrix, and $i,j, k,\ell$ be integers such
that $1 \le i \le j \le m$ and $1\le k \le \ell \le n$. We use
$A^{[i,j]}_{[k,\ell]}$ to denote the $(j-i+1)\times (\ell-k+1)$ submatrix of $A$ with row
indices $i,\dots,j$ and column indices $k,\dots,\ell$.
\end{ntn}

\begin{exa}
  Let $Y$ and $Z$ be matrices of indeterminates of sizes $5 \times 3$
  and $3 \times 5$ respectively and let $\KK$ be any field; set
  $S := \KK[Y,Z]$. To define a monomial order $<_B$ in the polynomial
  ring $S$, we first define an order on the variables of $S$. Sort the entries of the matrices $Y$ and $Z$ into blocks
  $B_1, B_2,\ldots , B_{15}$ as displayed in the respective matrices

\[
 \begin{pmatrix}
12 & 11 & 10 \\
9 & 14 & 13 \\
6 & 8 & 15 \\
3 & 5 & 7 \\
1 & 2 & 4
\end{pmatrix},
\begin{pmatrix} 12 & 9 & 6 & 3 & 1 \\
14 & 11 & 8 & 5 & 2 \\
15 & 13 & 10 & 7 & 4
\end{pmatrix}.
\]

Thus, for instance, $y_{1,1}$ is in the block $B_{12}$, $y_{1,2}$ is
in $B_{11}$, $z_{1,1}$ is in the block $B_{12}$, and so on. Now, for
$\gamma \in B_\ell$ and $\delta \in B_{k}$, set $\gamma < \delta$ if
$\ell <k$. Then, within each set $B_\ell$, fix an arbitrary order
among the variables. This gives us a total variable order in $S$. Our
monomial order $<_B$ is the reverse lexicographical order induced by
this variable order in $S$.

For a polynomial $f$, let $\ini(f)$ denote the initial monomial of $f$
with respect to our monomial order. Set $c_{i,j}:=(YZ)_{i,j}$ and let
$\alpha$ be the set consisting of the elements $c_{i,j}$ in $YZ$ with
$i+j\le 4$, together with the following minors of $Y$ and $Z$:
\[
     \det\Big(Y^{[4,5]}_{[1,2]}\Big),\quad
     \det\Big(Y^{[3,5]}_{[1,3]}\Big),\quad
     \det\Big(Y^{[2,4]}_{[1,3]}\Big),\qquad
     \det\Big(Z^{[1,2]}_{[4,5]}\Big),\quad
     \det\Big(Z^{[1,3]}_{[3,5]}\Big),\quad
     \det\Big(Z^{[1,3]}_{[2,4]}\Big).
\]
Notice that 
\begin{gather*}
\ini(c_{1,1}) = y_{1,1}z_{1,1}, \qquad \ini(c_{1,2}) = y_{1,2}z_{2,2}, \qquad \ini(c_{1,3}) = y_{1,3}z_{3,3}, \\ 
\ini(c_{2,1}) = y_{2,2}z_{2,1}, \qquad \ini(c_{2,2}) = y_{2,3}z_{3,2},
\qquad \ini(c_{3,1}) = y_{3,3}z_{3,1},
\end{gather*}
and
\begin{gather*}
\ini(\det\Big(Y^{[4,5]}_{[1,2]}\Big)) = y_{4,1}y_{5,2},\quad \ini(\det\Big(Y^{[3,5]}_{[1,3]}\Big)) = y_{3,1}y_{4,2}y_{5,3},\quad \ini(\det\Big(Y^{[2,4]}_{[1,3]}\Big)) = y_{2,1}y_{3,2}y_{4,3},\\
\ini(\det\Big(Z^{[1,2]}_{[4,5]}\Big)) = z_{1,4}z_{2,5},\quad \ini(\det\Big(Z^{[1,3]}_{[3,5]}\Big)) = z_{1,3}z_{2,4}z_{3,5},\quad \ini(\det\Big(Z^{[1,3]}_{[2,4]}\Big)) = z_{1,2}z_{2,3}z_{3,4}.
\end{gather*} 
Let $f$ be the product of the
elements of $\alpha$.
Since the initial terms of the elements in $\alpha$ are
squarefree and pairwise coprime,
 $f$ has a squarefree initial term. Moreover,
 $y_{5,1}$ and $z_{1,5}$ do not divide $f$.

The construction of this monomial order is crucial in establishing the $F$-regularity of the variety of complex $S/\frakp_{r,s}(Y,Z)$, as well as the $F$-purity of the nullcone $S/(YZ)S$, as we show next. From now on, assume that the underlying field $\KK$ has positive characteristic $p$.

We begin with the case of the varieties of complexes
$S/\frakp_{r,s}$. If $r=0$ or $s=0$, then $S/\frakp_{r,s}$ is
a determinantal ring, and thus we may focus on the case where
$(r,s) = (1,2)$ or $(r,s) = (2,1)$.

Let $h$ be the height of $\frakp_{r,s}$; 
we shall show in the proof of Theorem \ref{thm-Complex-Fregular} that
$f$ lies in the symbolic power $\frakp_{r,s}^{(h)}$ whenever
$r,s\neq 0$; hence we also have
$f^{p-1}\in\frakp_{r,s}^{(h(p-1))}$ for those $r,s$. As the initial
term $\ini(f)$ is squarefree, it follows from Corollary
\ref{CorollarySymbolic} that the ideal $\frakp_{r,s}$ defines an
$F$-pure ring for $r,s\neq 0$. In fact, since $y_{5,1}$ does not divide
$\ini(f)$, we get
\[
  y_{5,1}f^{p-1} \in y_{5,1}(\frakp_{r,s}^{[p]}:\frakp_{r,s}) \quad
  \text{while} \quad y_{5,1}f^{p-1} \notin \frakm_S^{[p]}.
\]
The ring $\frac{S}{\frakp_{r,s}}[\frac{1}{y_{5,1}}]$ is a smooth
extension of the determinantal ring defined by the size two minors of
a $2 \times 5$ matrix of indeterminates by Lemma
\ref{lemma:localization of VoC}, therefore it is $F$-regular. It
follows from Theorem \ref{theoremGlassbrenner} that the varieties of
complexes $S/\frakp_{r,s}$ are $F$-regular.

Now consider the nullcone $S/(YZ)S$. As the nullcone ideal $(YZ)S$ is
radical, with minimal primes $\frakp_{r,s}$ with $r+s=3$, we have
\[(YZ)S = \bigcap_{r+s=3}\frakp_{r,s}.\] Thus, by Lemma
\ref{lem-colon-contain} and Theorem \ref{theoremFedder}, in order to establish $F$-purity, it suffices
to find, for $r+s=3$, a polynomial
$g \in \frakp_{r,s}^{[p]}:\frakp_{r,s}$ 
such that $g \notin \frakm_S^{[p]}$. Let \[g = (y_{5,1}z_{1,5}f)^{p-1}.\] We will show in the proof of
Theorem \ref{thm-length-2-F-purity} that
$ g\in\frakp_{r,s}^{[p]}:\frakp_{r,s}$ for \emph{all} $r+s = 3$. As $y_{5,1}$ and $z_{1,5}$ do not divide $f$, we get $g \notin \frakm_S^{[p]}$, and so we are done by Theorem \ref{theoremFedder}.
\end{exa}

We now construct the monomial order $<_B$ illustrated in the above
example. We start with attaching a natural number to each entry of the
matrices $Y$ and $Z$. For $Z$, this process uses exactly the numbers 
$1,\ldots,t\times n$, starts with 1 in the upper right corner of $Z$, and
proceeds along upwards oriented diagonals, in ascending order.
For $Y$, on and below the main diagonal, and starting at the lower
left corner, proceed along upwards oriented diagonals in ascending
order. Above the main diagonal of $Y$, associate to $y_{i,j}$ with $i\le j$ the block that belongs to $z_{j,j-i+1}$. In particular, only if $m=n$ will the blocks used for $Y$ be exactly $1,\ldots,m\times t$; otherwise there could be repetitions as well as omissions.


The explicit formul\ae\ for the general block numbers are as follows:
For $1\le i\le m$ and $1\le j\le n$, the variable $y_{i,j}$ is in block $B_\ell$, where

\begin{eqnarray*}
  &&\ell =\left\{
    \begin{array}{ccc}
    \binom{m-i+j+1}{2}-j+1 &\text{ if }
      &i\geq m-t+1, j\le i-m+t,\\
      \binom{t+1}{2}+(t-j)+t(m-t+i-j-1)+1&\text{ if }
      &i-m+t+1\le j\le i-1,\\
      t\cdot n+t-j-\binom{t-i+2}{2}+1&\text{ if
                                                             }
      &1\le i\le j\le t.
    \end{array}\right.\\
\end{eqnarray*}
The variable $z_{i,j}$ is in block $B_\ell$, where

\begin{eqnarray*}
  &\ell =\left\{
    \begin{array}{ccc}
      i-2j+2n+\binom{i-j+n-1}{2}&\text{ if }
                                           &1-n\le i-j\le t-n-1,\\
      \binom{t}{2}+t(i-j+n-t+1)-i+1&\text{ if }
                                           &t-n\le i-j\le
                                             -1,\\
      t\cdot n-\binom{t-i+j+1}{2}+t-i+1&\text{ if }
                                           &0\le i-j\le t-1.
    \end{array}\right.
\end{eqnarray*}
              
As in the example, our monomial order $<_B$ is the reverse lexicographical order induced by this variable order in $S$. For a polynomial $f$, let $\ini(f)$ denote the initial monomial of $f$ with respect to our monomial order.

We now consider lead terms of certain elements of the nullcone ideal:

\begin{lem}\label{length-2-coprime}
  Let $\alpha$ be the set consisting of the elements $c_{i,j}:=(YZ)_{i,j}$ with $i+j\le t+1$, along with the following minors of $Y$ and $Z$:
   \begin{itemize}
       \item $\det\Big(Y^{[m-i+1,m]}_{[1,i]}\Big)$ for $2\le i \le t-1$,
       \item $\det\Big(Y^{[i+1,t+i]}_{[1,t]}\Big)$ for $1 \le i \le m-t$,
       \item $\det\Big(Z^{[1,i]}_{[n-i+1,n]}\Big)$ for $2 \le i \le t-1 $, and
       \item $\det\Big(Z^{[1,t]}_{[i+1,t+i]}\Big)$ for $1 \le i \le n-t$.
   \end{itemize}
The initial terms of the elements of $\alpha$ are squarefree and pairwise coprime with respect to the monomial order $<_B$.
\end{lem}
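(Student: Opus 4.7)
The plan is to first identify the initial term of every element of $\alpha$, and then to verify squarefreeness and pairwise coprimality by a finite case analysis.

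For the entries $c_{i,j}=\sum_{k=1}^t y_{i,k}z_{k,j}$ with $i+j\le t+1$, I would prove that $\ini(c_{i,j}) = y_{i,i+j-1}\,z_{i+j-1,j}$. Under the reverse-lexicographic order with variable order induced by the blocks $B_\ell$, the leading term among the $t$ summands of $c_{i,j}$ is the unique one whose smallest-block variable has the largest block. Using the explicit piecewise formulas for $\text{block}(y_{i,k})$ and $\text{block}(z_{k,j})$, I would compute $\min\bigl(\text{block}(y_{i,k}),\text{block}(z_{k,j})\bigr)$ as a function of $k\in\{1,\dots,t\}$; the hypothesis $i+j\le t+1$ and the geometry of the three cases force a unique maximizer at $k=i+j-1$. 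For the minors in $\alpha$, I would show that the initial term is the \emph{main-diagonal} product of the corresponding submatrix, e.g.\ $\prod_{k=1}^i y_{m-i+k,k}$ for $\det\bigl(Y^{[m-i+1,m]}_{[1,i]}\bigr)$ and $\prod_{k=1}^i z_{k,n-i+k}$ for $\det\bigl(Z^{[1,i]}_{[n-i+1,n]}\bigr)$. In the Leibniz expansion each summand corresponds to a permutation, and reverse-lex again selects the unique one whose smallest-block variable has the largest block; the blocks for $Y$ are laid out along upward-going diagonals starting at the lower-left corner of $Y$, and symmetrically for $Z$ starting at the upper-right, so within each listed submatrix the main-diagonal choice uniquely maximizes the minimum block.

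Once the initial terms are identified, squarefreeness is immediate, since each $\ini$ is a product of distinct variables. Pairwise coprimality then follows from a short case check. Two $c$-initial terms share a variable only if their $(i,j)$-indices coincide, since the pair $(y_{i,i+j-1},z_{i+j-1,j})$ determines $(i,j)$. The $y$-factor $y_{i,i+j-1}$ of $\ini(c_{i,j})$ has row $i\le t$ and column $i+j-1\le t$, while the $Y$-minors in $\alpha$ live in row ranges $[m-i+1,m]$ (type 1) or $[i+1,t+i]$ (type 2); a short arithmetic check using $i+j\le t+1$ rules out overlap of their main-diagonal entries with $y_{i,i+j-1}$, and the analogous argument works for $Z$. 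Two $Y$-minors of different sizes or different shifts have disjoint main-diagonal entries because the row/column multisets differ, and $Y$-minors are trivially coprime to $Z$-minors.

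The main obstacle is the rigorous identification of the initial terms: parsing the three-case block formulas for both $Y$ and $Z$, and checking that in each relevant region the claimed term (the main-diagonal term for the minors, and the $k=i+j-1$ term for $c_{i,j}$) uniquely maximizes the reverse-lex order. This is a routine but delicate computation with binomial coefficients, and is precisely where the specific design of the blocks becomes essential.
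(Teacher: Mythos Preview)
Your outline is essentially the paper's own proof: identify $\ini(c_{i,j})=y_{i,i+j-1}z_{i+j-1,j}$ and show the minors have main-diagonal lead terms, then check coprimality. Two remarks on execution.

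First, the paper avoids the explicit binomial block formulas entirely. Instead it uses three qualitative facts that follow directly from the block construction: (i) by design $y_{i,k}$ and $z_{k,k-i+1}$ share the same block whenever $i\le k$, so $y_{i,i+j-1}$ and $z_{i+j-1,j}$ are in the same block $B_\ell$; (ii) on and above the diagonal of $Y$, block numbers decrease left to right along a row, so $k>i+j-1$ forces $y_{i,k}$ into a block $<\ell$; (iii) block numbers in $Z$ increase with the row index, so $k<i+j-1$ forces $z_{k,j}$ into a block $<\ell$. The minor lead terms are handled similarly: any non-diagonal Leibniz term contains a variable strictly below and one strictly above the submatrix's diagonal, and one of these must sit in a smaller block than every main-diagonal entry. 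This is considerably shorter than parsing the piecewise formulas.

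Second, your coprimality sketch has two soft spots. The observation ``row $i\le t$ and column $i+j-1\le t$'' for $y_{i,i+j-1}$ does not by itself separate it from the $Y$-minor diagonal entries (which can also satisfy both bounds when $m$ is close to $t$); the clean separator is that $y_{i,i+j-1}$ lies \emph{on or above} the main diagonal of $Y$ (row $\le$ column), whereas every main-diagonal entry of the listed $Y$-minors lies strictly below it. Likewise, ``row/column multisets differ'' is not a valid reason for two $Y$-minors to have disjoint diagonal products; the correct reason is that their main diagonals lie on \emph{distinct} subdiagonals of $Y$ (row minus column equals $m-i$ for the first family and $i$ for the second, and these values $1,2,\ldots,m-2$ are all different). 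With these two fixes your argument goes through and matches the paper's.
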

\begin{proof}

As all elements considered are sums of square free monomials, their respective lead terms must also be square free. In order to show that the lead terms are also coprime, we first prove the following claim:

If
  $2\le i+j\le t+1$, then
  \begin{itemize}
  \item the entry $c_{i,j}=\sum_{k=1}^t y_{i,k}z_{k,j}$ contains a
    term for which $y_{i,k}$ and $z_{k,j}$ are both in $B_{\ell}$ for
    the same $\ell$. In fact, this happens for $k=i+j-1$;
  \item the term $y_{i,i+j-1}z_{i+j-1,j}$ is the lead term of $c_{i,j}$
    under $\le_B$.
  \end{itemize}

The first item is clear, since for $i<k$ we have defined the block
numbers of $y_{i,k}$ to agree with the block numbers of $z_{k,k-i+1}$
and so $y_{i,i+j-1}$ and $z_{i+j-1,j}$ are both in $B_{\ell}$ for the same $\ell$. 
For $i+j-1\le t$,
compare $y_{i,i+j-1}z_{i+j-1,j}$ to the other
entries $y_{i,k}z_{k,j}$ of $c_{i,j}$.  If $k>i+j-1$ then $y_{i,k}$ is
in the same row and to the right of $y_{i,i+j-1}$, which in turn is
above the main diagonal of $Y$. As the block numbers on and above the
diagonal in $Y$ decrease from left to right, $y_{i,k}$ is in
$B_{\ell'}$ for $\ell' < \ell$. Since we use revlex on the block
value, $y_{i,k}z_{k,j}<_B y_{i,i+j-1}z_{i+j-1,j}$. On the other hand,
if $k<i+j-1$, then $z_{k,j}$ is in the same column as, and above,
$z_{i+j-1,j}$. As block numbers in $Z$ increase with the row index,
$z_{k,j}$ is in $B_{\ell'}$ for $\ell' < \ell$. Since we use revlex on
the block value, $y_{i,k}z_{k,j}<_B y_{i,i+j-1}z_{i+j-1,j}$.

As the lead terms of the $c_{i,j}$ with $i+j\le t+1$ are the elements
$y_{i,i+j-1}z_{i+j-1,j}$ for that range of $i,j$, each $y_{i,j}$ with
$i \le j$ and each $z_{k,\ell}$ with $k\geq \ell$ appear exactly once in a
lead term of $c_{i,j}$.

Now observe that, in $Y$ and $Z$, the lead terms of the relevant
minors listed in Lemma \ref{length-2-coprime} are the main diagonal
terms, since any other term of this determinant involves a variable
that is under, and one variable that is above its main diagonal, and
(depending on whether one looks at minors in $Y$ or $Z$) one or the
other of these has smaller block number than any element on the main
diagonal of the minor. As the lead terms of the relevant minors of $Y$
and $Z$ are in disjoint diagonals that lie under the diagonal in $Y$,
and above the diagonal in $Z$, they are coprime to one
another and to the lead terms of the relevant $c_{i,j}$.
\end{proof}

\subsection{Main Results}

We will require the following lemma to prove the main results of this section:

\begin{lem}\label{lem-colon-contain}
    Let $I$ and $J$ be ideals in a regular ring of prime characteristic $p>0$. Then 
    \begin{itemize}
        \item $(I^{[p]}:I)\cap(J^{[p]}:J) \subseteq (I\cap J)^{[p]}:(I\cap J)$,
        \item $(I^{[p]}:I)\cap(J^{[p]}:J) \subseteq (I+J)^{[p]}:(I+J).$
    \end{itemize}
\end{lem}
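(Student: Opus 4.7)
The plan is to use the fact that in a regular ring of prime characteristic, the Frobenius endomorphism is flat (by Kunz's theorem). Flatness implies that Frobenius commutes with finite intersections of ideals, so that $(I\cap J)^{[p]}=I^{[p]}\cap J^{[p]}$. The equality $(I+J)^{[p]}=I^{[p]}+J^{[p]}$ holds in any commutative ring since Frobenius is a ring homomorphism and the bracket power of a sum of ideals is generated by the $p$-th powers of the generators.

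Given these two identities, both containments follow by a straightforward elementwise verification. Let $f\in (I^{[p]}:I)\cap (J^{[p]}:J)$. For the first containment, pick any $x\in I\cap J$. Since $x\in I$, we have $fx\in I^{[p]}$; similarly $fx\in J^{[p]}$. Hence $fx\in I^{[p]}\cap J^{[p]}=(I\cap J)^{[p]}$, and so $f\in (I\cap J)^{[p]}:(I\cap J)$.

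For the second containment, write an arbitrary element of $I+J$ as $x=x_1+x_2$ with $x_1\in I$ and $x_2\in J$. Then $fx_1\in I^{[p]}$ and $fx_2\in J^{[p]}$, so $fx=fx_1+fx_2\in I^{[p]}+J^{[p]}=(I+J)^{[p]}$, giving $f\in (I+J)^{[p]}:(I+J)$.

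The only nonroutine ingredient is the identity $(I\cap J)^{[p]}=I^{[p]}\cap J^{[p]}$, which is where regularity enters: extension along a flat map preserves intersections of finitely generated ideals. Everything else is formal manipulation, so I do not anticipate any real obstacle.
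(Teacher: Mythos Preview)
Your proof is correct and follows essentially the same approach as the paper: both use flatness of Frobenius (Kunz) to obtain $(I\cap J)^{[p]}=I^{[p]}\cap J^{[p]}$ for the first item, and both observe that the second item is purely formal and does not require regularity. The only cosmetic difference is that the paper phrases the first containment via the ideal identity $(A\cap B):C=(A:C)\cap(B:C)$ together with $I\cap J\subseteq I,J$, whereas you verify it elementwise; these are the same argument.
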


\begin{proof}
  By the flatness of the Frobenius map, we have $(I\cap J)^{[p]} = I^{[p]}\cap J^{[p]}$.
  Thus we get:
    \begin{eqnarray*}
      (I\cap J)^{[p]}:(I\cap J) &=& \Big(I^{[p]}:(I\cap J)\Big)\cap \Big( J^{[p]}:(I\cap J)\Big) \\
        &\supseteq& (I^{[p]}:I)\cap(J^{[p]}:J).
    \end{eqnarray*}

    The proof of the second item is immediate (and does not require the flatness of Frobenius).
\end{proof}

We are now ready to prove:

\begin{thm} \label{thm-Complex-Fregular} Let $Y$ and $Z$ be matrices
  of indeterminates of sizes $m \times t$ and $t \times n$
  respectively for positive integers $m$, $t$, and $n$. Let $\KK$ be a field; set $S :=\KK[Y,Z]$ and suppose that $r$ and $s$ are non-negative integers with $r+s \leq t$. 
    \begin{enumerate}[\quad\rm(1)]
        \item If $\KK$ is an $F$-finite field of positive characteristic, the variety of complexes $S/\frakp_{r,s}(Y,Z)$ is strongly $F$-regular.
        \item If $\KK$ has characteristic zero, $S/\frakp_{r,s}(Y,Z)$ has rational singularities.

    \end{enumerate}
    
\end{thm}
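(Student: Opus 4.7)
Part (2) follows from part (1) via the standard reduction-modulo-$p$ technique combined with Smith's theorem \cite[Theorem 4.3]{Smith}: since $\frakp_{r,s}$ is defined over $\ZZ$, strong $F$-regularity of the positive-characteristic reductions implies $F$-rationality, and hence rational singularities in characteristic zero. We therefore focus on part (1) and proceed by induction on $r+s+t$. In the base case $r=0$ (the case $s=0$ is symmetric), we have $\frakp_{0,s}=(Y)+I_{s+1}(Z)+(YZ)=(Y)+I_{s+1}(Z)$, so $S/\frakp_{0,s}\cong \KK[Z]/I_{s+1}(Z)$ is a generic determinantal ring, hence strongly $F$-regular by \cite[\S 7]{HH94}; the edge cases $m=1$, $n=1$, or $t=1$ similarly collapse to determinantal or polynomial rings.

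For the inductive step with $r,s\geq 1$ and $m,n,t\geq 2$, we apply Glassbrenner's Theorem \ref{theoremGlassbrenner} with distinguished element $y_{m,1}$. After relabeling the rows of $Y$ to move $y_{m,1}$ into the top-left position, Lemma \ref{lemma:localization of VoC} identifies $S/\frakp_{r,s}(Y,Z)[\tfrac{1}{y_{m,1}}]$ with a polynomial extension of $S'/\frakp_{r-1,s}(Y',Z')$ on matrices of smaller size $(m-1)\times(t-1)$ and $(t-1)\times n$; strong $F$-regularity of this localization then follows from the inductive hypothesis.

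It remains to verify the colon condition $y_{m,1}(\frakp_{r,s}^{[p]}:\frakp_{r,s})\not\subseteq \frakm_S^{[p]}$. Let $\alpha_{r,s}$ denote the subset of the set $\alpha$ from Lemma \ref{length-2-coprime} consisting of those elements that actually lie in $\frakp_{r,s}$, and set $f=\prod_{g\in\alpha_{r,s}} g$. By Lemma \ref{length-2-coprime}, $\ini(f)=\prod \ini(g)$ is squarefree, and as observed in the illustrative example, $y_{m,1}$ sits in the smallest block of the monomial order and hence divides no lead term $\ini(g)$. If one can show $f\in \frakp_{r,s}^{(h)}$ with $h=\height(\frakp_{r,s})=(m-r)(t-r)+(n-s)(t-s)+rs$, then Corollary \ref{CorollarySymbolic} yields $f^{p-1}\in \frakp_{r,s}^{(h(p-1))}\subseteq \frakp_{r,s}^{[p]}:\frakp_{r,s}$, and the monomial $y_{m,1}\ini(f)^{p-1}$, having exponent $1$ in $y_{m,1}$ and at most $p-1$ in every other variable, lies outside $\frakm_S^{[p]}$; the Glassbrenner criterion then concludes strong $F$-regularity.

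The main technical obstacle is the symbolic power containment $f\in \frakp_{r,s}^{(h)}$. Each entry $c_{i,j}$ contributes order $1$ to the symbolic filtration of $\frakp_{r,s}$ since $c_{i,j}\in (YZ)\subseteq \frakp_{r,s}$, while by the classical theorem on symbolic powers of generic determinantal ideals (De Concini--Eisenbud--Procesi), a size-$k$ minor of $Y$ lies in $I_{r+1}(Y)^{(k-r)}\subseteq \frakp_{r,s}^{(k-r)}$ whenever $k\geq r+1$, with an analogous statement for minors of $Z$. A delicate combinatorial bookkeeping of these contributions against the height formula, whose prototype is the verification carried out in the example giving exactly $h=13$ for $(r,s)=(1,2)$ on the $(5,3,5)$ block, establishes the required bound and completes the proof.
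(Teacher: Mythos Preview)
Your overall architecture matches the paper's: deduce (2) from (1) via Smith, induct using the localization Lemma~\ref{lemma:localization of VoC}, and close with Glassbrenner's criterion by exhibiting $f$ with $f\in\frakp_{r,s}^{(h)}$ so that Corollary~\ref{CorollarySymbolic} applies. However, there are genuine gaps.

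First, you never verify the symbolic power containment $f\in\frakp_{r,s}^{(h)}$; you only gesture at ``delicate combinatorial bookkeeping.'' This is precisely the technical heart of the argument. The paper carries this out explicitly: using $I_{\ell+k-1}(A)\subseteq I_\ell(A)^{(k)}$ and the containments $I_{r+1}(Y),I_{s+1}(Z),(YZ)\subseteq\frakp_{r,s}$, one tallies the symbolic contributions of each factor of $f$ and checks that the total equals $h$. The equality is not automatic---it holds on the nose only after the reduction $r+s=t$, where $h=ms+nr-rs$ and the identity $\binom{t+1}{2}+\binom{t-r}{2}+(m-t)(t-r)+\binom{t-s}{2}+(n-t)(t-s)=h$ is a genuine computation (with separate handling of $r=1$ or $s=1$). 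Without doing this, you have not proved the theorem.

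Second, you omit two reductions that the paper makes and that your argument actually needs. You must reduce to $t\le\min\{m,n\}$ (via the direct-summand trick), since otherwise the size-$t$ minors appearing in $\alpha$ of Lemma~\ref{length-2-coprime} do not exist and the monomial order construction breaks down. You should also reduce to the exact case $r+s=t$: the paper observes $\frakp_{r,s}=\frakp_{r,t-r}+\frakp_{t-s,s}$ and uses Lemma~\ref{lem-colon-contain} so that a single $f$ handles all pairs simultaneously. Your variant with a subset $\alpha_{r,s}\subseteq\alpha$ is not wrong in spirit, but it forces you to redo the symbolic-power count separately for every $(r,s)$ with $r+s\le t$, against the unsimplified height $(m-r)(t-r)+(n-s)(t-s)+rs$; the paper's route is both cleaner and what makes the later compatible-splitting statement go through.
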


\begin{proof}

Assertion $(2)$ follows from $(1)$ since $F$-regular rings are $F$-rational and $F$-rational rings have rational singularities by \cite[Theorem 4.3]{Smith} We therefore concentrate on the case where the
characteristic of $\KK$ is $p>0$.

We proceed by induction on $r$. The statement is clear for $r=0$, since then the ring $S/\frakp_{r,s}(Y,Z)$ is isomorphic to the determinantal ring $\KK[Z]/I_{s}(Z)$ which is strongly $F$-regular by \cite[\S 7]{HH94}.

Now assume that the assertion holds for some $r-1\geq 0$.  By suitably
restating Lemma \ref{lemma:localization of VoC}, we infer

\[
  \frac{S}{\frakp_{r,s}(Y,Z)}[\frac{1}{y_{m,1}}] \cong
  \frac{S'}{\frakp_{r-1,s}(Y',Z')}[y_{1,1}, \ldots, y_{m,1},y_{m,2},
  \ldots, y_{m,t}, \frac{1}{y_{m,1}}].
\]
where $Y'$ and $Z'$ are matrices of indeterminates of sizes
$(m-1) \times (t-1)$ and $(t-1) \times n$ respectively. It follows
from induction that the ring
$\frac{S}{\frakp_{r,s}(Y,Z)}[\frac{1}{y_{m,1}}]$ is strongly
$F$-regular.

If $s = 0$, then the ring $S/\frakp_{r,s}(Y,Z)$ is isomorphic to the
determinantal ring $\KK[Y]/I_{r}(Y)$, and we are done. So assume that
$s \geq 1$. In order to apply Theorem \ref{theoremGlassbrenner}, we must show that
    \[
      y_{m,1}(\frakp_{r,s}(Y,Z)^{[p]}:\frakp_{r,s}(Y,Z)) \not\subseteq
      \frakm_S^{[p]},
    \]
where $\frakm_S$ is the homogeneous maximal ideal of $S$. By Corollary
\ref{CorollarySymbolic}, it suffices to find a polynomial $f$
contained in $\frakp_{r,s}(Y,Z)^{(h)}$, where $h$ is the height of
$\frakp_{r,s}(Y,Z)$, such that $y_{m,1}f^{p-1} \notin
\frakm_S^{[p]}$. We will proceed to find such a polynomial; crucially this
polynomial will be \emph{the same} for all $r$ and $s$. However, before
finding this polynomial, it is useful to make the following reductions:

Firstly, since $F$-regularity is preserved by direct summands, we may assume that $t \leq \min \{m,n\}$ as the following argument shows: Choose two integers $m'\geq m$ and $n'\ge n$. Let $\overline{Y}$ be a generic $m'\times t$ matrix that contains
$Y$. Similarly, let $\overline{Z}$ be a generic $t\times n'$ matrix that
contains $Z$.  Clearly $YZ$ is a submatrix of
$\overline{Y}$ $\overline{Z}$. Consider the maps
\[
\KK[Y,Z]\to \KK[\overline{Y},\overline{Z}]\to \KK[Y,Z]
\]
where the first is the inclusion, and the second is the projection that
sends all variables in $\overline{Y}\minus Y$ and $\overline{Z}\minus Z$ to zero. The
inclusion sends $\frakp_{r,s}(Y,Z)$ into $\frakp_{r,s}(\overline{Y},\overline{Z})$ and the
projection sends $\frakp_{r,s}(\overline{Y},\overline{Z})$ onto $\frakp_{r,s}(Y,Z)$. Since
the composition is the identity map, the ring $\KK[Y,Z]/\frakp_{r,s}(Y,Z)$ is a
direct-summand of $\KK[\overline{Y},\overline{Z}]/\frakp_{r,s}(\overline{Y},\overline{Z})$. In consequence, $F$-regularity of the ring $\KK[\overline{Y},\overline{Z}]/\frakp_{r,s}(\overline{Y},\overline{Z})$ implies that
of $\KK[Y,Z]/\frakp_{r,s}(Y,Z)$.

Secondly, it suffices to consider varieties of complexes that are
exact; i.e., we may assume that $r+s=t$.  Indeed,
\[\frakp_{r,s}(Y,Z)+\frakp_{r',s'}(Y,Z) =
\frakp_{\min(r,r'),\min(s,s')}(Y,Z).\] Thus, any ideal defining a
variety of complexes may be written as the sum of ideals defining
varieties of exact complexes with the same $Y$ and $Z$.  Thus, by
Lemma \ref{lem-colon-contain}, if we can find a polynomial $f$ such that
\[
  f^{p-1} \in \frakp_{r,s}(Y,Z)^{[p]}:\frakp_{r,s}(Y,Z) \text{ and
  }f^{p-1} \in \frakp_{r',s'}(Y,Z)^{[p]}:\frakp_{r',s'}(Y,Z),
\] then
\[
  f^{p-1} \in
  \frakp_{\min(r,r'),\min(s,s')}(Y,Z)^{[p]}:\frakp_{\min(r,r'),\min(s,s')}(Y,Z).
\]
Having made these reductions, we exhibit the desired polynomial $f$ in the remainder of the proof.

Choose $f$ to be the product of the elements contained in the set
$\alpha$ as in Lemma \ref{length-2-coprime}. As the lead terms of the
elements contained in $\alpha$ are squarefree and coprime with respect to the monomial order constructed in Lemma \ref{length-2-coprime}, and as $y_{m,1}$ is not a factor of the said lead terms, $y_{m,1}f^{p-1}$ is not contained in $\frakm_S^{[p]}$. We now show that $f$ is contained in the symbolic power
$\frakp_{r,s}(Y,Z)^{(h)}$.

Recall that for $r+s=t$, the height of $\frakp_{r,s}(Y,Z)$ is $$h=(m-r)(t-r)+(n-s)(t-s)+rs=ms+nr-rs.$$

The set $\alpha$ as defined in Lemma
\ref{length-2-coprime} contains $\binom{t+1}{2}$ elements of
the form $c_{i,j}$, and so 
$f \in (YZ)^{\binom{t+1}{2}}$.

Before proceeding further, recall two well-known facts about symbolic powers: 
\begin{itemize}
\item Given an $m \times n$ matrix $A$ of indeterminates with
  $m \le n$, we have $I_{\ell+k-1}(A)\subseteq I_{\ell}(A)^{(k)}$
  whenever $1 \leq k \leq m - \ell +1$ by \cite[Proposition
  10.2]{BrunsVetter}.
\item Given prime ideals $\mathfrak{p} \subseteq \mathfrak{q}$ in
  a polynomial ring, we have
  $\mathfrak{p}^{(k)} \subseteq \mathfrak{q}^{(k)}$ for any $k\geq 0$.
\end{itemize}
%
Suppose first that $r,s>1$. Then we have
\begin{eqnarray*}
    f &\in& (YZ)^{\binom{t+1}{2}}\Bigg(\prod_{k=r+1}^{t-1}I_{k}(Y)\Bigg)I_{t}(Y)^{m-t}\Bigg(\prod_{\ell=s+1}^{t-1}I_{\ell}(Z)\Bigg)I_{t}(Z)^{n-t}\\
    &\subseteq& (YZ)^{\binom{t+1}{2}}\Bigg(\prod_{k=r+1}^{t-1}I_{r+1}(Y)^{(k-r)}\Bigg)\Big(I_{r+1}(Y)^{(t-r)}\Big)^{m-t}\Bigg(\prod_{\ell=s+1}^{t-1}I_{s+1}(Z)^{(\ell-s)}\Bigg)\Big(I_{s+1}(Z)^{(t-s)}\Big)^{n-t}\\
    &\subseteq&\frakp_{r,s}(Y,Z)^{\binom{t+1}{2}}\Big(I_{r+1}(Y)^{\big(\binom{t-r}{2}+(m-t)(t-r)\big)}\Big)\Big(I_{s+1}(Z)^{\big(\binom{t-s}{2}+(n-t)(t-s)\big)}\Big)\\
    &\subseteq& \frakp_{r,s}(Y,Z)^{\big(\binom{t+1}{2}+ \binom{t-r}{2}+(m-t)(t-r)+ \binom{t-s}{2}+(n-t)(t-s)\big)}
\end{eqnarray*}
Since $r+s=t$, an elementary computation shows that
\begin{eqnarray*}
    \binom{t+1}{2}+ \binom{t-r}{2}+(m-t)(t-r)+ \binom{t-s}{2}+(n-t)(t-s) &=& 
    ms+nr-rs = h.
\end{eqnarray*}
So,
$f$ is contained in $\frakp_{r,s}(Y,Z)^{(h)}$, as desired. 

Now assume that $r=1$, and so $h=m(t-1)+n-t+1$ while
$s+1>t-1$. Then we have
\begin{eqnarray*}
    f &\in& (YZ)^{\binom{t+1}{2}}\Bigg(\prod_{k=2}^{t-1}I_{k}(Y)\Bigg)I_{t}(Y)^{m-t}I_{t}(Z)^{n-t}\\
    &\subseteq& (YZ)^{\binom{t+1}{2}}\Bigg(\prod_{k=2}^{t-1}I_{2}(Y)^{(k-1)}\Bigg)\Big(I_{2}(Y)^{(t-1)}\Big)^{m-t}I_{t}(Z)^{n-t}\\
    &\subseteq&\frakp_{r,s}(Y,Z)^{\binom{t+1}{2}}\Big(I_{2}(Y)^{\big(\binom{t-1}{2}+(m-t)(t-1)\big)}\Big)\Big(I_{t}(Z)^{n-t}\Big)\\
    &\subseteq& \frakp_{r,s}(Y,Z)^{\big(\binom{t+1}{2}+ \binom{t-1}{2}+(m-t)(t-1)+(n-t)\big)}.
\end{eqnarray*}
Notice that
\begin{eqnarray*}
    \binom{t+1}{2}+ \binom{t-1}{2}+(m-t)(t-1)+(n-t) &=& 
    m(t-1)+n-t+1 = h.
\end{eqnarray*}
So, again, $f$ is contained in $\frakp_{r,s}(Y,Z)^{(h)}$, as desired. The
case $s=1$ is analogous to the case $r=1$, requiring only
that we switch the roles of the minors of $Y$ and $Z$. We are done
by Theorem \ref{theoremGlassbrenner}.
\end{proof}

Before proving the next theorem, we recall the notion of a \textit{compatibly split ideal}: Let $S$ be a polynomial ring over an $F$-finite field. Given a splitting $\theta: S \to S$ of the Frobenius map on $S$ (i.e. $\theta \circ F = \id_S$), an ideal $I \subset S$ is called compatibly $F$-split with respect to the splitting $\theta$ if $\theta(I) \subseteq I$. Note that $\theta$ naturally induces an $F$-splitting on $S/I$ so that $S/I$ is $F$-pure.

\begin{thm}\label{thm-length-2-F-purity}
  Let $Y$ and $Z$ be matrices of indeterminates of sizes $m \times t$
  and $t \times n$ respectively and $\KK$ an $F$-finite field of
  positive characteristic; set $S :=\KK[Y,Z]$ and assume that $r$ and $s$ are non-negative integers with $r+s \leq t$. 
  
  \begin{enumerate}[\quad\rm(1)]
      \item 
  If $t\le\min(m,n)$, the splittings of the Frobenius map on
  $S/\frakp_{r,s}(Y,Z)$ 
  may be chosen compatibly, i.e., the same splitting of the Frobenius map simultaneously splits each variety of complex. 
\item
  For any triple $(m,n,t)$, the generic nullcone $S/(YZ)S$
  is $F$-pure.
  \end{enumerate}
\end{thm}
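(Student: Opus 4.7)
The plan is to produce, under the hypothesis $t \le \min(m,n)$, a single element $g \in S$ lying in $\frakp_{r,s}^{[p]}:\frakp_{r,s}$ for every pair $(r,s)$ with $r+s \le t$ while simultaneously avoiding $\frakm_S^{[p]}$. Such a $g$ induces one Frobenius splitting of $S$ that descends to every variety of complexes at once, giving part~(1). For part~(2) with $t \le \min(m,n)$, the intersection decomposition $(YZ)S = \bigcap_{r+s=t} \frakp_{r,s}$ combined with the first containment of Lemma~\ref{lem-colon-contain} places $g$ inside $((YZ)S)^{[p]}:(YZ)S$, so Theorem~\ref{theoremFedder} finishes; the remaining triples $(m,n,t)$ are then handled by realising $\KK[Y,Z]/(YZ)$ as a direct summand of the corresponding nullcone for suitably padded matrices.

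Assume first that $t \ge 2$ (the case $t=1$ is essentially trivial, as $f$ itself is already a monomial). Take $f$ to be the product of the elements of the set $\alpha$ from Lemma~\ref{length-2-coprime} and set $g \colonequals (y_{m,1} z_{1,n} f)^{p-1}$. The non-degenerate containments $f^{p-1} \in \frakp_{r,s}^{[p]}:\frakp_{r,s}$ for $r+s = t$ with $r,s \ge 1$ have already been established in the proof of Theorem~\ref{thm-Complex-Fregular} via Corollary~\ref{CorollarySymbolic} and $f \in \frakp_{r,s}^{(h)}$; they persist after multiplying by $(y_{m,1} z_{1,n})^{p-1}$. The serious point is to handle the two degenerate minimal primes $\frakp_{0,t} = I_1(Y)$ and $\frakp_{t,0} = I_1(Z)$. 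Since the $y$-variables form a regular sequence in $S$,
\[
I_1(Y)^{[p]} : I_1(Y) \;=\; I_1(Y)^{[p]} + \Big(\,\textstyle\prod_{i,j} y_{i,j}^{p-1}\,\Big),
\]
and it suffices to show that modulo $I_1(Y)^{[p]}$ every surviving monomial of $g$ is a $\KK[Z]$-multiple of $\prod_{i,j} y_{i,j}^{p-1}$. This follows from a degree count: summing the $y$-contributions from the elements of $\alpha$ ($\binom{t+1}{2}$ from the $YZ$-entries, $\sum_{i=2}^{t-1} i$ from the type~A $Y$-minors, and $t(m-t)$ from the type~B $Y$-minors) shows that every monomial of $f$ has total $y$-degree exactly $mt-1$, so every monomial of $g$ has total $y$-degree exactly $mt(p-1)$. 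A monomial surviving reduction modulo $I_1(Y)^{[p]}$ has each $y$-exponent at most $p-1$, and the only way $mt$ non-negative integers each at most $p-1$ can sum to $mt(p-1)$ is for all of them to equal $p-1$, which is precisely the desired socle form. The argument for $I_1(Z)$ is symmetric.

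To extend to all $(r,s)$ with $r+s \le t$, I use the identity $\frakp_{r,s} = \frakp_{r,t-r} + \frakp_{t-s,s}$ (both summands being exact varieties of complexes) together with the second containment of Lemma~\ref{lem-colon-contain}. Next, $g \notin \frakm_S^{[p]}$ because the recipe in Lemma~\ref{length-2-coprime} shows $\ini(f)$ is the squarefree monomial equal to the product of every $y$-variable except $y_{m,1}$ and every $z$-variable except $z_{1,n}$; hence $\ini(g)$ is the $(p-1)$-st power of the product of \emph{all} variables of $S$. This completes~(1), and the intersection argument outlined above completes~(2) in the range $t \le \min(m,n)$. For arbitrary $(m,n,t)$, I enlarge $Y$ and $Z$ to generic $\overline{m} \times t$ and $t \times \overline{n}$ matrices $\overline{Y}, \overline{Z}$ with $\overline{m}, \overline{n} \ge t$; the inclusion $\KK[Y,Z] \hookrightarrow \KK[\overline{Y}, \overline{Z}]$ and the projection sending the extra variables to zero descend to ring maps $\KK[Y,Z]/(YZ) \to \KK[\overline{Y}, \overline{Z}]/(\overline{Y}\,\overline{Z}) \to \KK[Y,Z]/(YZ)$ whose composite is the identity, exhibiting the nullcone as a pure subring and propagating $F$-purity to $S/(YZ)S$.

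The main obstacle is the treatment of the degenerate minimal primes $I_1(Y)$ and $I_1(Z)$: they fall outside the symbolic-power argument of Theorem~\ref{thm-Complex-Fregular} and force the multiplicative boost $y_{m,1} z_{1,n}$, which in turn must be calibrated so that the total $y$- and $z$-degrees of every monomial of $g$ are \emph{exactly} $mt(p-1)$ and $nt(p-1)$ -- exactly the values needed for the pigeonhole principle to force surviving monomials onto the socle generators $\prod_{i,j} y_{i,j}^{p-1}$ and $\prod_{i,j} z_{i,j}^{p-1}$. The fact that the $y$-degree count for a monomial of $f$ comes out to precisely $mt-1$ (not $mt$ or $mt-2$) is exactly why the set $\alpha$ in Lemma~\ref{length-2-coprime} is calibrated with the specific families of rectangular minors used there.
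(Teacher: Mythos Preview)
Your proof is correct and follows the paper's approach essentially verbatim: the same element $g=(y_{m,1}z_{1,n}f)^{p-1}$, the same reduction to exact complexes via Lemma~\ref{lem-colon-contain}, and the same direct-summand enlargement for general $(m,n,t)$. The only difference is cosmetic: for the degenerate primes $\frakp_{0,t}=I_1(Y)$ and $\frakp_{t,0}=I_1(Z)$ the paper shows $y_{m,1}z_{1,n}f\in I_1(Y)^{mt}$ and then invokes Corollary~\ref{CorollarySymbolic}, whereas you use the complete-intersection colon formula $I_1(Y)^{[p]}:I_1(Y)=I_1(Y)^{[p]}+\bigl(\prod y_{i,j}^{p-1}\bigr)$ together with a direct pigeonhole on the $y$-degree; since the $y$-degree count $mt-1$ for $f$ is precisely the statement $f\in I_1(Y)^{mt-1}\smallsetminus I_1(Y)^{mt}$, the two arguments are the same computation in different clothing.
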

\begin{proof}

  The ring $S/(YZ)S$ with $t>\min(m,n)$ is a direct summand of the ring $S/(\overline{Y}\overline{Z})S$, where $\overline{Y}$ is an $m' \times t$ matrix which contains $Y$ and $\overline{Z}$ is a $t \times n'$ matrix which contains $Z$ with $m'\geq \max(m,t)$ and $n'\geq \max(n,t)$. Therefore, as in the proof of Theorem  \ref{thm-Complex-Fregular}, Part (2) of the present theorem then follows from Part (1) and the fact that $F$-purity is inherited by direct summands. Furthermore, for $t\le \min(m,n)$, the $F$-purity of $S/(YZ)S$ will follow once we have shown that the $F$-splittings of each $S/\frakp_{r,s}(Y,Z)$ are compatible. So, for the remainder of the proof, assume that $t\le\min(m,n)$.

  Recall that \[(YZ) = \bigcap_{r+s=t}\frakp_{r,s}(Y,Z).\] Thus, by
  Lemma \ref{lem-colon-contain} and Theorem
  \ref{theoremFedder}, it suffices to find a (single) polynomial
  $g \in \frakp_{r,s}(Y,Z)^{[p]}:\frakp_{r,s}(Y,Z)$ for all $r+s = t$
  such that $g \notin \frakm_S^{[p]}$.
    
  Let $f$ be the product of the elements of the set $\alpha$ as in
  Definition \ref{length-2-coprime}. In the proof of
  Theorem \ref{thm-Complex-Fregular} we showed that, for $r$ and $s$
  both nonzero, $f$ is contained in $\frakp_{r,s}(Y,Z)^{(h)}$ where
  $h$ is the height of $\frakp_{r,s}(Y,Z)$, and thus
  $f^{p-1} \in \frakp_{r,s}(Y,Z)^{[p]}:\frakp_{r,s}(Y,Z)$ by Corollary
  \ref{CorollarySymbolic}.

    In order to account for the cases where $r$ or $s$ are zero, let \[g := y_{m,1}z_{1,n}f.\] Clearly $g^{p-1} \in \frakp_{r,s}(Y,Z)^{[p]}:\frakp_{r,s}(Y,Z)$ for $r$ and $s$ both nonzero. We claim that \[g \in \frakp_{r,s}(Y,Z)^{[p]}:\frakp_{r,s}(Y,Z),\] when $r$ or $s$ is zero. 
    
    Assume first that $r = 0$, and so $h = mt$. We have
    \begin{eqnarray*}
    y_{m,1}z_{1,n}f &\in& (YZ)^{\binom{t+1}{2}}\Bigg(\prod_{k=1}^{t-1}I_{k}(Y)\Bigg)I_{t}(Y)^{m-t}\\
    &\subseteq& (YZ)^{\binom{t+1}{2}}\Bigg(\prod_{k=1}^{t-1}I_{1}(Y)^{k}\Bigg)\Big(I_{1}(Y)^{t}\Big)^{m-t}\\
    &\subseteq&\frakp_{r,s}(Y,Z)^{\binom{t+1}{2}}\Big(I_{1}(Y)^{\binom{t}{2}+(m-t)t}\Big)\\
    &\subseteq& \frakp_{r,s}(Y,Z)^{\binom{t+1}{2}+ \binom{t}{2}+(m-t)t}.
\end{eqnarray*}
Notice that
\[
    \binom{t+1}{2}+ \binom{t}{2}+(m-t)t\,\, =\,\,
    mt\,\, =\,\, h.
\]
Therefore $g$ lies in
$\frakp_{r,s}(Y,Z)^{h}\subseteq \frakp_{r,s}(Y,Z)^{(h)}$, and thus
 Corollary \ref{CorollarySymbolic} implies the claim. The case $s=0$
 is analogous.
    
 We conclude the proof of the theorem by observing that the lead terms
 of the elements of $\alpha$ are squarefree and coprime by Lemma
 \ref{length-2-coprime}, and that $y_{m,1}$ and $z_{1,n}$ are not
 factors of said lead terms. Thus, $g^{p-1}$ is not contained in
 $\frakm_S^{[p]}$, and we are done by Corollary \ref{CorollarySymbolic}.
\end{proof}

\begin{cor}\label{cor:VoCsqfreeInitial}
Let $Y$ and $Z$ be matrices of indeterminates of sizes $m \times t$
  and $t \times n$ respectively and $\KK$ a field; set $S :=\KK[Y,Z]$ and assume that $r$ and $s$ are non-negative integers.   

\begin{enumerate}[\quad\rm(1)]
\item If $r+s=t$, the ideal $\frakp_{r,s}(Y,Z)$ defining the variety of exact complexes has a squarefree initial ideal.
\item If $r+s <t$ and $\KK$ has positive characteristic, the ideal $\frakp_{r,s}(Y,Z)$ defining the variety of non-exact complexes and the generic nullcone ideal $(YZ)S$ have squarefree initial ideal.
\end{enumerate}
\end{cor}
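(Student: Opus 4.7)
The plan is to invoke Theorem~\ref{theorem:sqfreeinitial} of Varbaro--Koley: for a radical ideal $I\subseteq S$ whose associated primes have heights at most $h$, it suffices to produce a squarefree monomial in $\ini(I^{(h)})$ with respect to the monomial order $<_B$ constructed in $\S$\ref{Subsection VoCMonomialOrder}.

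For Part~(1), I would take $f := \prod_{a\in\alpha}a$, the product of the polynomials in the set $\alpha$ assembled in Lemma~\ref{length-2-coprime}. By that lemma the initial terms of the elements of $\alpha$ are squarefree and pairwise coprime, so $\ini(f)$ is itself a squarefree monomial. The calculation carried out in the proof of Theorem~\ref{thm-Complex-Fregular} establishes $f \in \frakp_{r,s}(Y,Z)^{(h)}$ whenever $r+s=t$ with $r,s\ge 1$, where $h$ is the height of $\frakp_{r,s}(Y,Z)$. The boundary cases $r=0$ and $s=0$ are trivial: $\frakp_{0,t}(Y,Z) = I_1(Y)$ and $\frakp_{t,0}(Y,Z) = I_1(Z)$ are already monomial and squarefree. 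Theorem~\ref{theorem:sqfreeinitial} then yields Part~(1).

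For Part~(2), where $\charac(\KK) = p > 0$, an extra step is required because when $r+s<t$ the polynomial $f$ only reaches $\frakp_{r,s}(Y,Z)^{(e)}$ for an exponent $e = \binom{t+1}{2}+\binom{t-r}{2}+(m-t)(t-r)+\binom{t-s}{2}+(n-t)(t-s)$, strictly less than $h$; a direct count shows the deficit to equal $\binom{t-r+1}{2}+\binom{t-s+1}{2}+rs-\binom{t+1}{2} > 0$. To close this gap I would augment $f$ by multiplying in additional factors belonging to $\frakp_{r,s}(Y,Z)$ whose initial terms under $<_B$ are coprime both to $\ini(f)$ and to each other; natural candidates are the entries $c_{i,j}$ with $i+j>t+1$ together with further diagonal-type minors of $Y$ and $Z$ of sizes $\ge r+1$ and $\ge s+1$, chosen so that their diagonal patterns stay disjoint from those already exploited in the construction of $f$. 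The resulting product would lie in $\frakp_{r,s}(Y,Z)^{(h)}$ with squarefree initial term. For the generic nullcone $(YZ)S = \bigcap_{r+s=t}\frakp_{r,s}(Y,Z)$, the maximum height of its associated primes is $h^* = t\cdot\max(m,n)$, attained at $\frakp_{0,t}(Y,Z) = I_1(Y)$ or $\frakp_{t,0}(Y,Z) = I_1(Z)$; the compatible Frobenius splittings supplied by Theorem~\ref{thm-length-2-F-purity}, together with the polynomial $g := y_{m,1}z_{1,n}f$ (still having squarefree initial term), should, after an analogous augmentation, deliver an element of $(YZ)^{(h^*)}$ whose initial term is squarefree. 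Theorem~\ref{theorem:sqfreeinitial} then concludes in each case.

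The main obstacle is the combinatorial bookkeeping in Part~(2): verifying that, for each non-exact $(r,s)$ and for the nullcone, sufficiently many independent extra factors can be chosen from $\frakp_{r,s}(Y,Z)$ or $(YZ)S$ to close the symbolic-power deficit without any pair of initial terms sharing a variable. The key point making this feasible is that the monomial order $<_B$ leaves many variables untouched in $\ini(f)$, and that the rich supply of $c_{i,j}$ with $i+j>t+1$ together with the additional minors of $Y$ and $Z$ of varying admissible sizes provides considerable flexibility for selecting diagonal patterns that are disjoint from those already used.
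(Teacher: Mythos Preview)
Your Part~(1) is essentially the paper's argument: the same polynomial $f$ (or $g=y_{m,1}z_{1,n}f$) lands in $\frakp_{r,s}^{(h)}$ by the computation in Theorem~\ref{thm-Complex-Fregular}, and Theorem~\ref{theorem:sqfreeinitial} finishes.

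Your Part~(2), however, has a genuine gap. The augmentation you propose cannot work, for a simple counting reason: by Lemma~\ref{length-2-coprime}, the lead terms of the elements of $\alpha$ together exhaust \emph{every} variable of $S$ except $y_{m,1}$ and $z_{1,n}$ (this is exactly why the paper remarks that $\ini(g)$ is the product of all the variables). Hence any additional factor you multiply into $f$---whether a $c_{i,j}$ with $i+j>t+1$ or a further minor of $Y$ or $Z$---has every variable of its initial term already appearing in $\ini(f)$, so the product can no longer have squarefree initial term. At best you can adjoin $y_{m,1}$ and $z_{1,n}$, gaining $2$ in the symbolic exponent, but your own deficit computation $\binom{t-r+1}{2}+\binom{t-s+1}{2}+rs-\binom{t+1}{2}$ already exceeds $2$ for, e.g., $r=s=1$, $t=4$. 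The same obstruction applies verbatim to the nullcone.

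The paper avoids this entirely: for Part~(2) it does \emph{not} try to reach $\frakp_{r,s}^{(h)}$. Instead it invokes \cite[Theorem~3.12]{Varbaro-Koley} (rather than Theorem~3.13, which is Theorem~\ref{theorem:sqfreeinitial} here), a positive-characteristic statement whose input is precisely a polynomial $g$ with $g^{p-1}\in I^{[p]}:I$ and squarefree initial term. The element $g=y_{m,1}z_{1,n}f$ satisfies this for every $\frakp_{r,s}$ and for $(YZ)S$ by the proof of Theorem~\ref{thm-length-2-F-purity}, so no augmentation is needed. This is why Part~(2) carries the positive-characteristic hypothesis.
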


\begin{proof}
Let $<_B$ be the monomial order constructed in $\S$\ref{Subsection VoCMonomialOrder}. Choose $f$ to be the product of the elements contained in the set
$\alpha$ as in Lemma \ref{length-2-coprime}, and set
\[g:=y_{m,1}z_{1,n}f\]
as in the proof of Theorem \ref{thm-length-2-F-purity}. By the proof of Theorem \ref{thm-length-2-F-purity}, $\ini(g)$ is the product of all the variables of $S$ and lies in the initial ideal $\ini(\frakp_{r,s}^{(h)})$ for $r+s=t$, where $h$ is the height of $\frakp_{r,s}(Y,Z)$. We are done by Theorem \ref{theorem:sqfreeinitial}. For $(2)$, choose the same polynomial $g$; we are done by \cite[Theorem 3.12]{Varbaro-Koley}, where we additionally need the field to be of positive characteristic.
\end{proof}

We end with the following:

\begin{que}
Let $Y$ and $Z$ be matrices of indeterminates of sizes $m \times t$ and $t \times n$ respectively for positive integers $m$, $t$, and $n$. Let $r$ and $s$ be non-negative integers with $r+s \leq t$ and let $\frakp_{r,s}$ denote the ideal defining a variety of complexes in the polynomial ring $\KK[Y,Z]$. Denote by
\[\calR ^S(\frakp_{r,s}):= \bigoplus_{k \geq 0} \frakp_{r,s}^{(k)} \quad \text{and} \quad G^S(\frakp_{r,s}):= \bigoplus_{k \geq 0} \frakp_{r,s}^{(k)}/\frakp_{r,s}^{(k+1)}\]
the \emph{symbolic Rees algebra} and the \emph{symbolic associated graded algebra} of $\frakp_{r,s}$ respectively. Are these rings Noetherian?
\end{que} 

The proof of Theorem \ref{thm-Complex-Fregular} shows that the ideals defining the varieties of complexes are \emph{symbolic $F$-split} (see \cite[Corollary 5.10]{dSMNB21}). It immediately follows by \cite[Theorem 4.7]{dSMNB21} that the symbolic Rees algebra and the symbolic associated graded algebra of the ideal $\frakp_{r,s}$ are $F$-split (hence reduced). However, we do not know if either of these blowup algebras are Noetherian.

\section*{Acknowledgments}
We would like to thank Anurag Singh, Bernd Ulrich, Jack Jeffries, and Mel Hochster for several valuable discussions. We thank Matt Weaver for sharing useful study material with us. Vaibhav Pandey and Yevgeniya Tarasova thank Alessandro De Stefani, Jonathan Monta{\~n}o, Lisa Seccia, and Luis N\'{u}\~{n}ez-Betancourt for their advice and encouragement. Vaibhav Pandey is especially thankful to Matteo Varbaro for the invitation to the University of Genova, where a part of this work was carried out. We thank the referees for their careful review and valuable feedback.

\bibliographystyle{alpha}
\bibliography{main.bib}

\end{document}